\newtheorem{thm}{Theorem}
\newtheorem{lem}{Lemma}
\newtheorem{prop}{Proposition}
\newtheorem{assum}{Assumption}
\newtheorem{rem}{Remark}
\DeclareMathOperator{\ad}{ad}
\DeclareMathOperator{\Ad}{Ad}
\DeclareMathOperator{\tr}{tr} 
\DeclareMathOperator{\dom}{dom}  
\DeclareMathOperator{\diag}{diag}
\newcommand{\T}{^{\top}} 
\newcommand{\ie}{\textit{i.e.}}
\title{
	Hybrid Feedback for  Global Tracking on Matrix Lie Groups $SO(3)$ and $SE(3)$ 
} 
\author{Miaomiao Wang  and Abdelhamid Tayebi  
	\thanks{This work was supported by the National Sciences and Engineering Research Council of Canada (NSERC), under the grants NSERC-DG RGPIN-2020-06270. Preliminary results of this paper were presented at the 58th IEEE Conferene on Decision and Control, Nice, France, December 2019 \cite{wang2019new}. (\textit{Corresponding author: Miaomiao Wang.})
} 
	\thanks{Miaomiao Wang is with the Department of Electrical and Computer Engineering,
		Western University, London, ON N6A 3K7, Canada (e-mail: {\tt\small mwang448@uwo.ca}).}%
	\thanks{Abdelhamid Tayebi is with the Department of Electrical Engineering, Lakehead University, Thunder Bay, ON P7B 5E1, Canada. He is also with the Department of Electrical and Computer Engineering, 
		Western University, London, ON N6A 3K7, Canada.
		(e-mail: {\tt\small atayebi@lakeheadu.ca}).}
}%
\begin{document}

	\maketitle
	
	
\begin{abstract}
		We introduce a new hybrid control strategy, which is conceptually different from the commonly used  synergistic hybrid approaches, to efficiently deal with the problem of the undesired equilibria that precludes smooth vectors fields on $SO(3)$ from achieving global stability. The key idea consists in constructing a suitable potential function on $SO(3)\times \mathbb{R}$ involving an auxiliary scalar variable, with flow and jump dynamics, which keeps the state away from the undesired critical points while, at the same time, guarantees a decrease of the potential function over the flows and jumps. Based on this new hybrid mechanism, a hybrid feedback control scheme for the attitude tracking problem on $SO(3)$, endowed with global asymptotic stability and semi-global exponential stability guarantees, is proposed. This control scheme is further improved through a smoothing mechanism that removes the discontinuities in the input torque. The third hybrid control scheme, proposed in this paper, removes the requirement of the angular velocity measurements, while preserving the strong stability guarantees of the first hybrid control scheme. This approach has also been applied to the tracking problem on $SE(3)$ to illustrate its advantages with respect to the existing synergistic hybrid approaches. Finally, some simulation results are presented to illustrate the performance of the proposed hybrid controllers.	
	\end{abstract}
	
	\begin{IEEEkeywords}
		Attitude control,  Hybrid feedback, Rigid body system, Velocity-free feedback,  Matrix Lie group,
	\end{IEEEkeywords}
	
	\section{Introduction}
	The attitude tracking control problem of rigid body systems has been widely investigated in the literature with many applications related to robotics, aerospace and marine engineering, for instance \cite{crouch1984spacecraft,Wie1989,Wen1991, pettersen1999time,chaturvedi2011rigid,naldi2017robust}.  In particular, geometric control design on Lie groups $SO(3)$ and $SE(3)$, has generated a great deal of research work   \cite{koditschek1989application,bullo1999tracking,bhat2000topological,maithripala2006almost,chaturvedi2011rigid,lee2012robust,invernizzi2019dynamic}. It is well known that achieving global stability results with feedback control schemes designed on Lie groups such as $SO(3)$ and $SE(3)$, is a fundamentally difficult task due to the topological obstruction of the motion space induced by the fact that these manifolds are not homeomorphic to $\mathbb{R}^n$ and that there is no smooth vector field that can have a global attractor \cite{koditschek1989application}. To achieve almost global asymptotic stability (AGAS), a class of \textit{suitable} ``navigation functions" has been introduced in \cite{koditschek1989application}. In \cite{bullo1999tracking}, the Riemannian structure of the configuration manifold for a class of mechanical systems, is used to derive a local exponential control law, while in \cite{maithripala2006almost}, an almost global tracking controller has been proposed for a general class of Lie groups via intrinsic globally-defined error dynamics. It is well known that for any smooth potential function on $SO(3)$,  there exist at least four critical points where its gradient vanishes \cite{morse1934calculus}, and as such, almost global stability is the strongest result that one can achieve in this case.

	Using the hybrid dynamical systems framework of \cite{goebel2009hybrid,goebel2012hybrid}, a unit-quaternion based hybrid control scheme for global attitude tracking was first proposed in \cite{mayhew2011quaternion}, which led thereafter to other variants such as  \cite{schlanbusch2012hybrid,schlanbusch2014hybrid,gui2016global}. The use of a ``synergistic" family of potential functions to overcome the topological obstruction to global asymptotic stability (GAS) on $SO(3)$ has been introduced in \cite{mayhew2011synergistic}. A family of potential functions is ``centrally'' synergistic, if the identity is the common critical point of all the potential functions in the family.  This synergistic hybrid approach was successfully applied to the rigid body attitude control problem in \cite{mayhew2011hybrid}, where a hysteresis-based switching mechanism was introduced to avoid all the undesired critical points and ensure some robustness to measurement noise. However, only  numerical examples were provided to construct such a synergistic family of potential functions via angular warping on $SO(3)$. Inspired by this, a number of hybrid controllers and observers on Lie groups $SO(3)$, $SE(3)$ and $SE_2(3)$ have been proposed in the literature \cite{mayhew2013synergistic,casau2015globally,lee2015global,berkane2017construction,berkane2018hybrid,berkane2017hybrid,berkane2017hybrid2,wang2019hybrid,wang2020hybrid}.  The work in \cite{berkane2017construction}, provides a systematic and comprehensive procedure for the construction of synergistic potential functions on $SO(3)$, which is then applied to design velocity-free hybrid attitude stabilization and tracking control schemes \cite{berkane2017construction,berkane2018hybrid}. Moreover, a hybrid attitude control scheme on $SO(3)$ using an ``exp-synergistic'' family of potential functions, leading to global exponential stability, has been proposed in \cite{berkane2017hybrid}.  Alternatively, a ``non-central" synergistic family of potential functions has been considered in \cite{mayhew2013synergistic} to relax the centrality condition. A similar idea can be found in \cite{lee2015global}. Recently, a  hybrid control approach on smooth manifolds, relying on a switching logic between local coordinates, has been proposed in \cite{casau2020hybrid}.

	\textit{Contributions:} In this paper, we propose new hybrid feedback control strategies for the  tracking problems on matrix Lie groups $SO(3)$ and $SE(3)$, leading to GAS guarantees. Some extensions are also proposed to smooth out the discontinuities induced by the control input  switching, and to handle the lack of angular velocity measurements. The main novelty and strength of our approach is the fact that it can overcome the compactness condition required in the synergistic hybrid approaches. Therefore, it can be applied to a general class of non-compact matrix Lie groups to generate globally asymptotically stabilizing hybrid feedback laws as demonstrated through the design of a geometric hybrid tracking control scheme on the non-compact manifold $SE(3)$.
	The key idea of our hybrid attitude control strategy consists in using  a suitable  potential function on $SO(3)\times \mathbb{R}$ involving an auxiliary scalar variable with hybrid dynamics. This scalar variable is governed by some appropriately designed flow dynamics when the state in $SO(3)\times \mathbb{R}$ is away from the undesired critical points, and is governed by an appropriately designed jump strategy when the state is in the neighborhood of the undesired critical points. The flow and jump strategies are designed to ensure a decrease of the potential function over the flows and jumps. In contrast with the synergistic hybrid approaches where a synergistic family of  potential functions on $SO(3)$ is used \cite{mayhew2011hybrid,mayhew2011synergistic,mayhew2013synergistic,berkane2017construction,berkane2018hybrid}, our hybrid approach relies on a single potential function on $SO(3)\times \mathbb{R}$ parameterized by the hybrid auxiliary scalar variable. As it is going to be shown later, our approach on top of the hybrid control design simplification, allows to overcome the set compactness assumption (stemming from the diffeomorphism condition of the transformation map) required in the synergistic hybrid approaches. This important advantage, makes our approach a good candidate for the design of hybrid controllers on non-compact manifold such as $SE(3)$ where the existing hybrid approaches (for instance, \cite{mayhew2011hybrid,mayhew2011synergistic,lee2015global,berkane2017construction,berkane2017hybrid,berkane2018hybrid,casau2020hybrid}) are not applicable in a straightforward manner.  
	A preliminary version of this work has been presented in \cite{wang2019new} without the semi-global exponential stability proof, the control smoothing mechanism and the velocity-free hybrid tracking scheme presented in the present paper.

	\textit{Organization:} The remainder of this paper is organized as follows: Section \ref{sec:backgroud} provides the preliminaries that will be used throughout this paper. In Section \ref{sec:problem}, we formulate our attitude tracking control problem. In Section \ref{sec:min-resetting}, a new hybrid mechanism using a potential function on $SO(3) \times \mathbb{R}$ is presented. In Sections \ref{sec:basic}-\ref{sec:velocity_free}, we present our three hybrid attitude tracking controllers.
	In Section \ref{sec:potential}, we provide a systematic procedure for the construction of the  potential function on $SO(3)\times \mathbb{R}$ satisfying all the requirements for the design of our hybrid attitude tracking controllers. In Section \ref{sec:SE(3)}, our hybrid control strategy is extended to the pose tracking problem on the non-compact Lie group $SE(3)$. Simulation results are presented in Section \ref{sec:simulation}.

	\section{Preliminaries} \label{sec:backgroud}
	\subsection{Notations and Definitions}
	The sets of real, non-negative real and natural numbers are denoted by $\mathbb{R}$, $\mathbb{R}_{\geq 0}$ and $\mathbb{N}$, respectively. We denote by $\mathbb{R}^n$ the $n$-dimensional Euclidean space, and by $\mathbb{S}^n$ the set of unit vectors in $\mathbb{R}^{n+1}$. Given two matrices, $A,B\in \mathbb{R}^{m\times n}$, their Euclidean inner product is defined as $\langle\langle A,B\rangle\rangle = \tr(A\T B)$. The Euclidean norm of a vector $x\in \mathbb{R}^n$ is denoted by $\|x\|$, and the Frobenius norm a matrix $X\in \mathbb{R}^{n\times m}$ is denoted by $\|X\|_F = \sqrt{\tr(X\T X)}$. 
	For a given symmetric matrix $A\in \mathbb{R}^{n\times n}$, we define $\mathcal{E}(A)$ as the set of all the unit-eigenvectors of $A$, $(\lambda^A_i, v_i^A)$ as the $i$-th pair of eigenvalue and eigenvector of $A$, and $\lambda_{\min}^A$ and $\lambda_{\max}^A$ as the minimum and maximum eigenvalue of $A$, respectively. 
	Let  $\mathcal{M}$ be a  smooth manifold embedded in $\mathbb{R}^n$ and $T_x\mathcal{M}$ be the \textit{tangent space} on $\mathcal{M}$ at point $x\in \mathcal{M}$. The \textit{gradient} of a  differentiable real-valued function $f:\mathcal{M} \to  \mathbb{R}$ at point $x\in \mathcal{M}$, denoted by $\nabla_x f(x) \in T_x\mathcal{M}$, relative to the \textit{Riemannian metric} $\langle~,~ \rangle_x: T_x\mathcal{M} \times T_x\mathcal{M} \to \mathbb{R}$ is uniquely defined by $\dot{f}(x) = \langle \nabla_x f(x), \xi\rangle_x$ for all $\xi\in T_x \mathcal{M}$.  A point $x\in \mathcal{M}$ is called a \textit{critical point}  of $f$ if its  gradient  at $x$ varnishes  (\ie, $\nabla_{x} f(x) = 0$).  A continuously differentiable function $f:\mathcal{M} \to \mathbb{R}_{\geq 0} $ is said to be a \textit{potential function} on $\mathcal{M}$ with respect to the  set $\mathcal{A} \subset \mathcal{M}$ if $f(x)>0$ for all $x\notin  \mathcal{A}$ and $f(x) = 0$ for all $x\in \mathcal{A}$.

	The 3-dimensional  \textit{Special Orthogonal group} is defined by  
	$
	SO(3): = \left\{R\in \mathbb{R}^{3\times 3}: R\T R = RR\T = I_3, \det(R) = +1\right\}
	$, and  the \textit{Lie algebra} of  $SO(3)$ is defined by
	$
	\mathfrak{so}(3) := \left\{\Omega\in \mathbb{R}^{3\times 3}: \Omega\T=-\Omega \right\}
	$. The tangent space of $SO(3)$ at any base point $R$ is  defined by
	$
	T_R SO(3) : = \{R\Omega\in \mathbb{R}^{3\times 3} :\ R\in SO(3), \Omega \in \mathfrak{so}(3)\}
	$. The inner product  in the tangent space $T_R SO(3)$ defines the left invariant metric on $SO(3)$ as $\langle R \Omega_1,R \Omega_2 \rangle_R = \langle \langle  \Omega_1,  \Omega_2 \rangle \rangle$ for all $R\in SO(3)$ and $\Omega_1,\Omega_2\in \mathfrak{so}(3)$.
	For any $R\in SO(3)$, we define $|R|_I\in [0,1]$ as the normalized Euclidean distance on $SO(3)$ with respect to the identity $I_3$, which is given by $|R|_I^2 = \tr(I_3-R)/4$. 
	Let the map $\mathcal{R}_a: \mathbb{R} \times \mathbb{S}^2\to SO(3)$ represent the well-known angle-axis parameterization of the attitude, which is given by
	$
	\mathcal{R}_a(\theta,u) :=  I_3 + \sin(\theta) u^\times + (1-\cos(\theta))(u^\times)^2
	$
	with $\theta\in \mathbb{R}$ denoting the
	rotation angle and $u\in \mathbb{S}^2$ denoting the rotation axis. For a given vector $x:=[x_1,x_2,x_3]\T \in \mathbb{R}^3$, we define $x^\times$ as the skew-symmetric matrix given by
	\[
	x^\times = \begin{bmatrix}
		0 & -x_3 & x_2 \\
		x_3 & 0 & -x_1 \\
		-x_2 & x_1 & 0
	\end{bmatrix} 
	\] 
	and $\text{vec}(\cdot)$ as the inverse operator of the map $(\cdot)^\times$, such that $\text{vec}(x^\times) = x$.	
	For a matrix $A\in \mathbb{R}^{3\times 3}$, we denote   $\mathbb{P}_a(A) := \frac{1}{2}(A-A\T)$ as the anti-symmetric projection of $A$. Define the composition map $\psi: =\text{vec} \circ \mathbb{P}_a $  such that, for a matrix $A=[a_{ij}] \in \mathbb{R}^{3\times 3}$, one has
	$
	\psi (A) := \text{vec} (\mathbb{P}_a(A)) =\frac{1}{2}[
	a_{32} - a_{23},
	a_{13} - a_{31},
	a_{21} - a_{12}
	]\T.
	$
	For any $A\in \mathbb{R}^{3\times 3}, x\in \mathbb{R}^3$, one can verify that
	$
	\langle\langle A, x^\times\rangle\rangle
	= 2x\T \psi (A).
	$

	\subsection{Hybrid Systems Framework} 
	Consider a  smooth manifold $\mathcal{M}$ embedded in $\mathbb{R}^n$ and its tangent bundle denoted by $T \mathcal{M}=\bigcup_{x\in \mathcal{M}} T_x \mathcal{M}$.
	A general model of a hybrid system is given as \cite{goebel2009hybrid}:
	\begin{equation}\mathcal{H}:  
		\begin{cases}
			\dot{x} ~~\in F(x),& \quad x \in \mathcal{F}   \\
			x^{+} \in G(x),& \quad x \in \mathcal{J}
		\end{cases} \label{eqn:hybrid_system}
	\end{equation}
	where $x\in \mathcal{M}$ denotes the state, $x^+$ denotes the state after an instantaneous jump, the \textit{flow map} $F: \mathcal{M} \to T \mathcal{M}$ describes the continuous flow of $x$ on the \textit{flow set} $\mathcal{F} \subseteq \mathcal{M}$, and the \textit{jump map} $G: \mathcal{M}\rightrightarrows  \mathcal{M}$ (a set-valued mapping from $\mathcal{M}$  to $\mathcal{M}$) describes the discrete jumps of $x$ on the \textit{jump set} $\mathcal{J} \subseteq \mathcal{M}$.  A solution $x$ to $\mathcal{H}$ is parameterized by $(t, j) \in \mathbb{R}_{\geq 0} \times \mathbb{N}$, where $t$ denotes the amount of time passed and $j$ denotes the number of discrete jumps that have occurred.  A subset $\dom x \subset \mathbb{R}_{ \geq 0} \times \mathbb{N}$ is a \textit{hybrid time domain} if for every $(T,J)\in \dom x$, the set, denoted by $\dom x \bigcap ([0,T]\times \{0,1,\dots,J\})$, is a union of finite intervals  of the form $\bigcup_{j=0}^{J} ([t_j,t_{j+1}] \times \{j\})$ with  a time sequence $0=t_0 \leq t_1 \leq \cdots \leq t_{J+1}$. 
	A solution $x$ to $\mathcal{H}$ is said to be  \textit{maximal} if
	it cannot be extended by flowing nor jumping, and \textit{complete} if its domain $\dom x$ is unbounded. Let $|x|_{\mathcal{A}}$  denote the distance of a point $x$ to a closed set $\mathcal{A} \subset \mathcal{M}$,  and then the set $\mathcal{A}$ is said to be: 
	\textit{stable} for $\mathcal{H}$ if for each $\epsilon>0$ there exists $\delta>0$ such that each maximal solution $x$ to $\mathcal{H}$ with $|x(0,0)|_{\mathcal{A}} \leq \delta$ satisfies $|x(t,j)|_{\mathcal{A}} \leq \epsilon$ for all $(t,j)\in \dom x$; \textit{globally attractive} for $\mathcal{H}$ if  every maximal solution $x$ to $\mathcal{H}$  is complete and satisfies $\lim_{t+j\to \infty}|x(t,j)|_{\mathcal{A}} = 0$ for all $(t,j)\in \dom x$; \textit{globally asymptotically stable} if it is both stable and globally attractive for $\mathcal{H}$. 
	Moreover, the $\mathcal{A}$ is said to be (locally) \textit{exponentially stable} for   $\mathcal{H}$ if there exist $\kappa, \lambda, \mu>0$ such that, for any $|x(0,0)|_{\mathcal{A}} < \mu$, every maximal solution  $x$  to $\mathcal{H}$ is complete  and satisfies 
	$|x(t,j)|_{\mathcal{A}} \leq \kappa \exp(-\lambda(t+j))|x(0,0)|_{\mathcal{A}}$ for all $(t,j)\in \dom x$ \cite{teel2013lyapunov}. We refer the reader to \cite{goebel2009hybrid,goebel2012hybrid} and references therein for more details on hybrid dynamical systems.

	\section{Problem Formulation}  \label{sec:problem}

	The dynamical equations of motion of a rigid body on $SO(3)$ are given by
	\begin{align}
		\begin{cases}
			\dot{R} &= R\omega^\times   \\
			J\dot{\omega} &= - \omega^\times J\omega + \tau  
		\end{cases} \label{eqn:dynamics_R}
	\end{align}
	where the rotation matrix $R$ denotes the attitude of the rigid body, $\omega\in \mathbb{R}^3$ is the body-frame angular velocity, $J=J\T\in \mathbb{R}^{3\times 3}$ is the inertia matrix (constant and known), and $\tau\in \mathbb{R}^3$ is the control torque to be designed. 

	Let $m>0$ and let $\mathcal{W}_d \subset SO(3)\times \mathbb{R}^3$ be a compact (closed and bounded) subset. Let the desired reference trajectory be generated by the following dynamical system \cite{mayhew2013synergistic}: 
	\begin{align}
		\left. \begin{array}{rl}
			\dot{R}_r  &= R_r \omega_r^\times\\
			\dot{\omega}_r & = z \\
			z  &\in m\mathbb{B}
		\end{array}\right\} (R_r,\omega_r)\in \mathcal{W}_d \label{eqn:dynamics_R_r}
	\end{align}
	where  $R_r$ and $\omega_r$ are the desired rotation and angular velocity, and $m\mathbb{B}:=\{x\in \mathbb{R}^3: \|x\|\leq m\}$ is the closed ball in $\mathbb{R}^3$.
	As shown in \cite{mayhew2013synergistic},  every maximal solution to \eqref{eqn:dynamics_R_r} is complete, and any possible solution component $\omega_r$ of \eqref{eqn:dynamics_R_r} is Lipschitz continuous with Lipschitz constant $m$, but not necessarily differentiable. 
	
	Let us consider the left-invariant attitude tracking error $R_e= R_r\T R$  and the angular velocity tracking error $\omega_e = \omega - R_e\T \omega_r$. From \eqref{eqn:dynamics_R}-\eqref{eqn:dynamics_R_r}, one obtains the following error dynamics \cite{mayhew2013synergistic}:
	\begin{subequations} \label{eqn:Re-Jomega_e}
		\begin{align}
			\dot{R}_e & = R_e \omega_e^\times  \label{eqn:Re}\\
			J\dot{\omega}_e  &=  \Sigma(R_e,\omega_e,\omega_r)\omega_e  - \Upsilon(R_e, \omega_r,z) + \tau \label{eqn:Jomega_e} 
		\end{align}
	\end{subequations}
	where the functions $\Upsilon: SO(3)\times \mathbb{R}^3 \times \mathbb{R}^3 \to \mathbb{R}^3$ and  $\Sigma: SO(3)\times \mathbb{R}^3 \times \mathbb{R}^3 \to \mathfrak{so}(3)$ are given by
	\begin{subequations}
		\begin{align}
			\Upsilon(R_e,\omega_r,z) &= JR_e\T z + (R_e\T \omega_r)^\times JR_e\T \omega_r  \label{eqn:Upsilon_def} \\
			\Sigma(R_e,\omega_e,\omega_r)  &= (J\omega_e)^\times  + (JR_e\T \omega_r)^\times  \nonumber \\
			&\qquad  - ((R_e\T \omega_r)^\times J + J(R_e\T \omega_r)^\times). \label{eqn:Sigma_def}
		\end{align}
	\end{subequations}
	Note that  $\Sigma$ is skew-symmetric, and as such, for each $u\in \mathbb{R}^3$ one has $u\T \Sigma(R_e,\omega_e,\omega_r)  u = 0$. It is clear that $\Upsilon(R_e,\omega_r,z)$ is known,  and  $\Upsilon(R_e,\omega_r,z)  = 0$ if   $R_r$ is constant.

	The control objective consists in designing hybrid feedback laws such that the desired equilibrium of the error dynamics \eqref{eqn:Re}-\eqref{eqn:Sigma_def}, \ie, $(R_e = I_3, \omega_e = 0)$, is globally asymptotically stable.

	\section{A new Hybrid Mechanism Using a Single Potential Function on $SO(3) \times \mathbb{R}$}	\label{sec:min-resetting} 
	Let  $V(R)$ be a potential function on $SO(3)$ with respect to $I_3$. Let $\nabla_R V : R \to T_RSO(3)$ denote the gradient of $V$ at point $R$. 
	According to Lusternik-Schnirelmann theorem \cite{lusternik1934methodes} and Morse theory \cite{morse1934calculus}, a smooth vector field on $SO(3)$ can not have a global attractor, and any smooth potential function on $SO(3)$ must have at least four critical points. Let the set of all critical points of $V(R)$ be denoted by $ \Psi_V = \{R\in SO(3)|   \nabla_R V(R) = 0 \}  $, and the set of all \textit{undesired} critical points be denoted by $ \Psi_V\setminus\{I_3\}$.

	One possible way to make the desired critical point $I_3$ a global attractor, consists in generating a non-smooth gradient-based vector field on $SO(3)$ through a switching mechanism between a family of potential functions as done in \cite{mayhew2011hybrid,mayhew2011synergistic,berkane2017construction}. The potential functions are constructed using a modified trace function and a transformation map $\mathcal{T}: SO(3)\to SO(3)$ such that all the potential functions share only the desired critical point $I_3$. 
	For instance, a transformation map $\mathcal{T}(R) = \exp(\vartheta (R) u^\times )R$ with $\vartheta (R)=k V(R), k\in \mathbb{R}, u\in \mathbb{S}^2$, known as the ``angular warping",  is considered in \cite{mayhew2011synergistic}. As shown in \cite[Theorem 8]{mayhew2011synergistic}, $\mathcal{T}$ is required to be a diffeomorphism, and to be as such, $k$ has to be chosen sufficiently small in magnitude (\ie, $\sqrt{2}|k| \max\|\nabla_{R}V(R)\|_F < 1$ for all $R\in SO(3)$). A similar design of the central synergistic family of potential functions can be found in \cite{berkane2017construction}, where a different transformation map $\mathcal{T}(R) = R\exp(\vartheta(R) u^\times )$ with $\vartheta(R)=2\arcsin(k V(R))$ and $k$ sufficiently small, has been considered. Note that the existence of the parameter $k$ in \cite{mayhew2011hybrid,mayhew2011synergistic,berkane2017construction} is guaranteed mainly due to the fact that $SO(3)$ is compact. Alternatively, in \cite{mayhew2013synergistic}, a non-central synergistic family of potential functions has been designed based on a modified trace function through constant translation, scaling and biasing. Unfortunately, it is not straightforward to explicitly construct such a family of potential functions, especially when dealing with systems evolving on non-compact manifolds.

	To overcome the above mentioned problems, we propose a new approach that does not require the generation of a family of potential functions via a diffeomorphism map, leading to a much simpler design of hybrid control systems on $SO(3)$ or other non-compact manifolds. The key idea consists in using a single potential function $U: SO(3)\times \mathbb{R} \to \mathbb{R}_{\geq 0}$,  with respect to the  $\mathcal{A}_o:= (I_3,0)$, parameterized by a scalar variable $\theta\in \mathbb{R}$ that has flow and jump dynamics. In contrast with the previously mentioned synergistic potential functions approaches, where the potential functions are parameterized by a discrete variable, our single potential function $U$ is adjusted through the continuous flows and the discrete jumps of the auxiliary variable $\theta$ such that the resulting non-smooth gradient-based vector field yields a single attractor $\mathcal{A}_o$. The details of the construction of the potential function  $U$ and its properties will be presented later. 
	Let us define the set of all the critical points of $U$   as  
	\begin{multline}
		\Psi_U : =  \{(R,\theta)\in SO(3)\times \mathbb{R}:  \\ 
		\nabla_{R} U(R,\theta)  =0_{3\times 3},  
		\nabla_{\theta} U(R,\theta) =0\}  \label{eqn:criticalU}
	\end{multline} 
	with  $\nabla_{R} U(R,\theta)$ and $\nabla_{\theta} U(R,\theta)$ denoting  the gradients of $U(R,\theta)$ with respect to $R$ and $\theta$, respectively. Let $\Theta \subset \mathbb{R}$ be a nonempty and finite real set, and consider the following basic assumption for our potential function $U(R,\theta)$:
	\begin{assum}[Basic Assumption]\label{assum:1}
		There exist a potential function $U$ on $SO(3)\times \mathbb{R}$ with respect to $\mathcal{A}_o$  and a nonempty finite set  $\Theta \subset \mathbb{R}$ such that  $\mathcal{A}_o \in \Psi_U$  and
		\begin{equation}
			\mu_U(R,\theta)  > \delta,  \quad  \forall (R,\theta)\in \Psi_U\setminus\{\mathcal{A}_o\}  \label{eqn:u_U_delta}
		\end{equation} 
		with some constant $\delta >0$ and the map $\mu_U:SO(3)\times \mathbb{R}  \to \mathbb{R}_{\geq 0}$ is given by
		\begin{equation}
			\mu_U(R,\theta) := U(R,\theta) - \min_{{\theta}'\in \Theta}U(R,{\theta}')  \label{eqn:defu_U}.
		\end{equation}
	\end{assum}
	\begin{rem}
		From the definitions of sets $\Psi_U$ and $\mathcal{A}_o$, the set $\Psi_U\setminus\{\mathcal{A}_o\}$ denotes the set of all the undesired critical points of $U(R,\theta)$.  
		Assumption 1 implies that for any undesired critical point $(R,\theta) \in \Psi_U\setminus\{\mathcal{A}_o\}$, there exists another state $(R,{\theta}')$ with ${\theta}'\in \Theta$  such that the value of $U(R,{\theta}')$ is lower than the value of $U(R,\theta)$ by a constant gap $\delta$. Hence, one can reset $\theta$ (at each jump) to the one leading to the minimum value of $U$ with a strict decrease such that the state after jump is away from the undesired critical points. This property, together with an appropriately designed feedback over the flows, will guarantee global asymptotic stability of desired equilibrium point (see an example in Fig. \ref{fig:diagram1}).
		This basic assumption is motivated from the synergistic family of potential functions  on $SO(3)$  proposed in \cite{mayhew2011hybrid,mayhew2011synergistic,mayhew2013synergistic}. 
	\end{rem}

	\begin{figure}[!ht]
		\centering
		\includegraphics[width=0.7\linewidth]{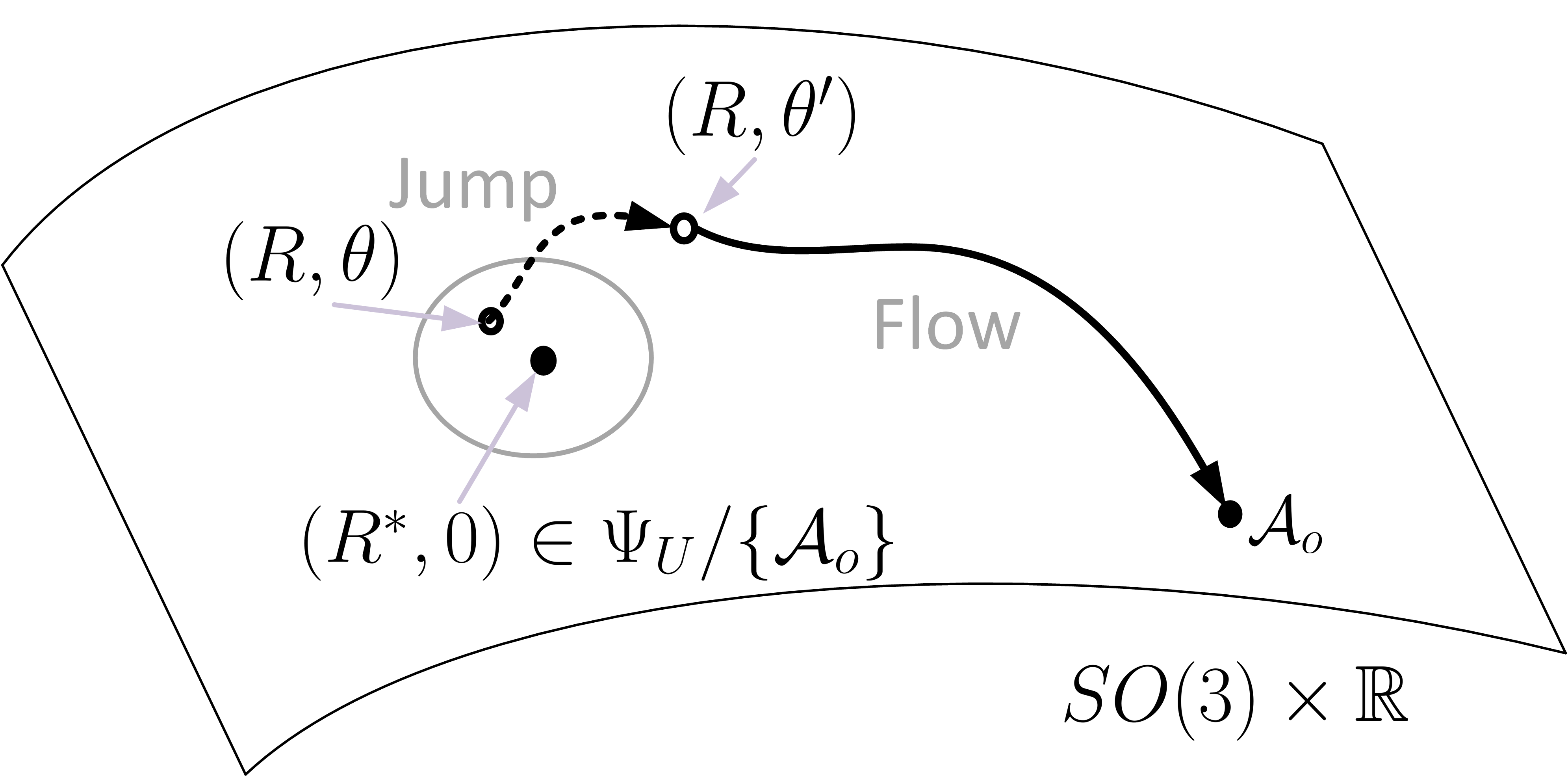} 
		\caption{Hybrid mechanism using a single potential function on $SO(3)\times \mathbb{R}$. The point $(R^*,0)$ denotes  one of the undesired critical points of $U(R,\theta)$. The dashed and solid black curves represent the discrete jumps and  continuous flows of $(R,\theta)$, respectively. 
		}
		\label{fig:diagram1}
	\end{figure}
	
	Once a nonempty finite set  $\Theta \subset \mathbb{R}$ and a potential function $U$ satisfying the basic Assumption \ref{assum:1} are constructed, as it will be shown later, the flow and jump dynamics governing the evolution of $\theta$ and in turn of $U$ will be designed to avoid the undesired critical points, leaving $\mathcal{A}_o$ as the unique attractor. In fact, $\theta$ flows when the state $(R,\theta)$ is away from the set $\Psi_U\setminus\{\mathcal{A}_o\}$, and jumps  to some $ {\theta}'\in \Theta$ leading to minimum value of $U(R,\theta')$) when the state $(R,\theta)$ is in the neighborhood of the set $\Psi_U\setminus\{\mathcal{A}_o\}$. We propose the following hybrid dynamics $\mathcal{H}_\theta$ for $\theta$:
	\begin{align}
		\mathcal{H}_\theta: \begin{cases}
			\dot{\theta} ~~ = f(R,\theta), & (R,\theta)\in \mathcal{F}\\
			\theta^+ \in g(R, \theta), & (R,\theta)\in \mathcal{J}
		\end{cases}\label{eqn:dynamics_theta}
	\end{align}
	with $\theta(0,0)\in  \mathbb{R}$. The flow and jump sets are defined as 	
	\begin{subequations}\label{eqn:Fset-Jset} 
		\begin{align}
			\mathcal{F}&:= \left\{(R,\theta)\in SO(3) \times \mathbb{R}:  \mu_U(R,\theta)\leq \delta \right\} \label{eqn:Fset} \\
			\mathcal{J}&:= \left\{(R,\theta) \in SO(3)\times \mathbb{R}:  \mu_U(R,\theta)\geq \delta \right\} \label{eqn:Jset}
		\end{align} 
	\end{subequations}
	with  $\mu_U$ defined in \eqref{eqn:defu_U}, and the flow map  $f:SO(3)\times \mathbb{R} \to \mathbb{R}$ and jump map   $g:SO(3)\times \mathbb{R} \rightrightarrows \Theta$  are  defined as 
	\begin{align}
		f(R,\theta)&:= -  k_\theta\nabla_{\theta} U(R,\theta) \label{eqn:ftheta} \\
		g(R,\theta) &:= \left\{\theta \in \Theta :  \theta= \arg \min\nolimits_{{\theta}'\in \Theta}U(R,{\theta}') \right\}  \label{eqn:gtheta}
	\end{align} 
	with constant scalar $k_\theta>0$. By Assumption \ref{assum:1},  the design of the jump set $\mathcal{J}$  implies that all the undesired critical points are located in the jump set, \ie, $\Psi_U\setminus\{\mathcal{A}_o\} \subset \mathcal{J}$. Note that the flow map $f$ in \eqref{eqn:ftheta} is nothing else but the negative gradient of $U$ with respect to $\theta$ contributing to driving the state $(R,\theta)$ towards the critical points of $U$. The jump map $g$ in \eqref{eqn:gtheta} is designed to drive (through jumps) the state $(R,\theta)$ away from the undesired critical points.
	From the definitions of the jump set $\mathcal{J}$ and jump map $g$, it is clear that every $(R,\theta)\in \mathcal{J}$ , one has $U(R,\theta)-U(R,g(R,\theta)) = U(R,\theta)-\min\nolimits_{ {\theta}'\in \Theta}U(R,{\theta}') = \mu_U(R,\theta) \geq \delta$, which guarantees a minimum decrease of the potential function $U$ by  $\delta$, after each jump.

	\section{Hybrid Feedback Design} \label{sec:basic} 
	We propose the following hybrid feedback tracking control scheme: 
	\begin{align}
		\underbrace{
			\begin{array}{l}
				\tau = \Upsilon(R_e, \omega_r,z)   -\kappa(R_e,\theta,\omega_e)\\
				\dot{\theta} = f(R_e,\theta)
			\end{array}
		}_{(R_e,\theta)\in \mathcal{F}}
		\underbrace{
			\begin{array}{l}	
				\\
				\\[-0.4cm] 	
				\theta^+  \in g(R_e, \theta)
			\end{array}
		}_{(R_e,\theta)\in \mathcal{J}}  \label{eqn:hybrid_feedback1}
	\end{align}
	where function $\Upsilon$ is defined in \eqref{eqn:Upsilon_def}, the flow and jump sets $\mathcal{F}$ and $\mathcal{J}$  are defined in  \eqref{eqn:Fset} and \eqref{eqn:Jset}, respectively, the function  $\kappa$ is given by
	\begin{align} 	 
		\kappa(R_e,\theta,\omega_e)&:= 2 k_R\psi(R_e\T \nabla_{R_e} U(R_e,\theta)) + k_\omega  \omega_e  \label{eqn:kappa}  
	\end{align}
	with constant scalars $k_R, k_\omega>0$, and the maps $f$ and $g$ (in terms of $R_e$ and $\theta$) are defined in \eqref{eqn:ftheta} and \eqref{eqn:gtheta}, respectively. The main difference between the proposed hybrid feedback \eqref{eqn:hybrid_feedback1}-\eqref{eqn:kappa} with respect to the ones proposed in \cite{mayhew2013synergistic,berkane2018hybrid} is the extended hybrid dynamics of the auxiliary variable $\theta$ which modifies (continuously) the potential function in the flow set $\mathcal{F}$ and modifies (through jumps) the potential function in the jump set $\mathcal{J}$ (\ie, in the neighborhood of the undesired critical points of $U$). The gradient of the potential function $U$, with the extended hybrid dynamics of $\theta$, is used in the control to force  $\mathcal{A}_o$ to be a global attractor. Fig. \ref{fig:diagram3} illustrates the proposed hybrid feedback strategy.

	\begin{figure}[!ht]
		\centering
		\includegraphics[width=0.94\linewidth]{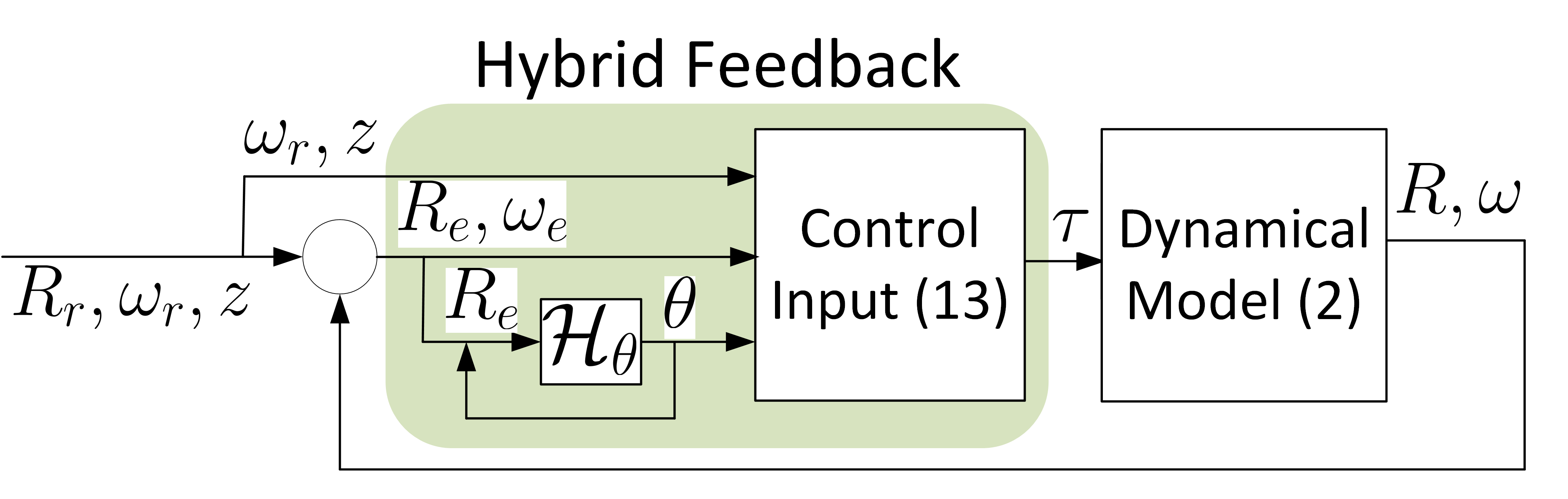}
		\caption{Hybrid feedback strategy for global attitude tracking.}
		\label{fig:diagram3}
	\end{figure}

	Define the new state space $\mathcal{S}:=SO(3) \times \mathbb{R} \times \mathbb{R}^3 \times \mathcal{W}_d $  and the new state $x:=(R_e,\theta,\omega_e,R_r,\omega_r) \in \mathcal{S}$. In view of \eqref{eqn:Re-Jomega_e}, \eqref{eqn:dynamics_theta}-\eqref{eqn:Fset-Jset} and \eqref{eqn:hybrid_feedback1}, one has the following hybrid closed-loop system:
	\begin{equation}
		\begin{cases}
			\dot{x}  ~~\in F(x), & x \in \mathcal{F}_c:=\{x\in \mathcal{S}  : (R_e,\theta)\in \mathcal{F}\} \\
			x^+ \in G(x), & x \in \mathcal{J}_c:=\{x\in \mathcal{S}  : (R_e,\theta)\in \mathcal{J}\}
		\end{cases} \label{eqn:closed-loop1}
	\end{equation}
	where the flow and jump maps are given by
	\begin{align}
		F(x)&:= \begin{pmatrix}
			R_e \omega_e^\times \\
			f(R_e,\theta)\\
			J^{-1}(\Sigma(R_e,\omega_e,\omega_r)\omega_e   -\kappa(R_e,\theta,\omega_e))  \\
			R_r \omega_r^\times \\
			m\mathbb{B} 
		\end{pmatrix}  \label{eqn:closed-loop1-F} \\
		G(x)&:= \begin{pmatrix}
			R_e,~
			g(R_e,\theta),~
			\omega_e,~
			R_r,~
			\omega_r 
		\end{pmatrix} \label{eqn:closed-loop1-J} 
	\end{align}
	with $\Sigma,f,g$ and $\kappa$ defined in  \eqref{eqn:Sigma_def}, \eqref{eqn:ftheta}, \eqref{eqn:gtheta} and  \eqref{eqn:kappa}, respectively. One can verify that $\mathcal{F}_c\cup\mathcal{J}_c = \mathcal{S}$, $\mathcal{F}_c$ and $\mathcal{J}_c$ are closed, and the hybrid closed-loop system   \eqref{eqn:closed-loop1} is autonomous and satisfies the hybrid basic conditions \cite[Assumption 6.5]{goebel2012hybrid}.
	Now, one can state one of our main results:
	\begin{thm}\label{them:1}
		Let $k_R, k_\omega,k_\theta>0$ and suppose that Assumption  \ref{assum:1} holds.  
		Then,  the set $\mathcal{A}:= \{x \in \mathcal{S}  : (R_e,\theta)= \mathcal{A}_o, \omega_e=0 \} $ is globally asymptotically stable for the hybrid  system  \eqref{eqn:closed-loop1} and the number of jumps is finite.  
	\end{thm}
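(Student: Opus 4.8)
The plan is a hybrid Lyapunov argument built on the single candidate
\[
\mathcal{L}(x) := k_R\,U(R_e,\theta) + \tfrac{1}{2}\,\omega_e\T J\omega_e ,
\]
which is continuous, nonnegative on $\mathcal{S}$, and vanishes exactly on $\mathcal{A}$ (since $U$ is a potential function with respect to $\mathcal{A}_o$ and $J=J\T>0$). First I would differentiate $\mathcal{L}$ along flows. The feedback \eqref{eqn:hybrid_feedback1}--\eqref{eqn:kappa} cancels $\Upsilon$, so $J\dot\omega_e=\Sigma\omega_e-\kappa(R_e,\theta,\omega_e)=\Sigma\omega_e-2k_R\psi(R_e\T\nabla_{R_e}U)-k_\omega\omega_e$; using $\dot R_e=R_e\omega_e^\times$ together with the left-invariant metric and the identity $\langle\langle A,x^\times\rangle\rangle=2x\T\psi(A)$, the flow law $\dot\theta=-k_\theta\nabla_\theta U$, and $\omega_e\T\Sigma\omega_e=0$, the cross terms proportional to $\omega_e\T\psi(R_e\T\nabla_{R_e}U)$ cancel, leaving
\[
\dot{\mathcal{L}}(x) = -\,k_R k_\theta\big(\nabla_\theta U(R_e,\theta)\big)^2 \;-\; k_\omega\|\omega_e\|^2 \;\le\; 0, \qquad x\in\mathcal{F}_c .
\]
Along jumps $R_e,\omega_e,R_r,\omega_r$ are unchanged and $\theta^+\in g(R_e,\theta)$ attains $\min_{\theta'\in\Theta}U(R_e,\theta')$, hence $\mathcal{L}(G(x))-\mathcal{L}(x)=-k_R\mu_U(R_e,\theta)\le -k_R\delta<0$ for every $x\in\mathcal{J}_c$, by the definition of $\mathcal{J}$ and Assumption~\ref{assum:1}.

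For \emph{stability} I would use that $\mu_U(\mathcal{A}_o)=0<\delta$ and $\mu_U$ is continuous, so a neighborhood of $\mathcal{A}$ lies in the interior of the flow set, where jumps are impossible; any sublevel set $\{\mathcal{L}\le c\}$ contained in this neighborhood and inside a prescribed $\epsilon$-ball around $\mathcal{A}$ is then forward invariant (no jumps there and $\mathcal{L}$ nonincreasing along flows), which gives the usual $\epsilon$--$\delta$ estimate, using that $\mathcal{L}$ is positive definite with respect to $\mathcal{A}$ near $\mathcal{A}$. For \emph{completeness}, the hybrid basic conditions already hold (noted after \eqref{eqn:closed-loop1}); $\omega_e$ is bounded along any solution because $\mathcal{L}$ is nonincreasing, $R_e\in SO(3)$ and $(R_r,\omega_r)\in\mathcal{W}_d$ evolve in compact sets, and $\theta$ stays bounded thanks to the structure of the flow set together with the growth properties of $U$ to be established in Section~\ref{sec:potential}; hence no finite escape time, and since $\mathcal{F}_c\cup\mathcal{J}_c=\mathcal{S}$ every maximal solution can be continued, so it is complete.

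\emph{Finiteness of jumps} is then immediate: each jump drops $\mathcal{L}$ by at least $k_R\delta$, flows never increase it, and $\mathcal{L}\ge 0$, so at most $\lceil \mathcal{L}(x(0,0))/(k_R\delta)\rceil$ jumps occur. For \emph{global attractivity}, after the last jump the solution flows forever and, being complete and precompact, converges (by a hybrid invariance principle, cf.\ \cite{goebel2012hybrid}) to the largest weakly invariant subset of $\{x\in\mathcal{F}_c:\dot{\mathcal{L}}(x)=0\}=\{x\in\mathcal{F}_c:\omega_e=0,\ \nabla_\theta U(R_e,\theta)=0\}$. On such an invariant set $\omega_e\equiv0$ forces $\dot\omega_e\equiv0$, hence $\psi(R_e\T\nabla_{R_e}U)=0$; since $R_e\T\nabla_{R_e}U\in\mathfrak{so}(3)$ and $\psi$ is injective on $\mathfrak{so}(3)$, this yields $\nabla_{R_e}U=0$, so $(R_e,\theta)\in\Psi_U$. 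But the invariant set sits in $\mathcal{F}_c$, i.e.\ $\mu_U(R_e,\theta)\le\delta$, while Assumption~\ref{assum:1} gives $\mu_U>\delta$ on $\Psi_U\setminus\{\mathcal{A}_o\}$; hence necessarily $(R_e,\theta)=\mathcal{A}_o$ and the invariant set is exactly $\mathcal{A}$. Combined with stability, this gives global asymptotic stability of $\mathcal{A}$.

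The step I expect to be the main obstacle is the completeness/precompactness argument and the reduction to a purely continuous-time invariance principle: one must rule out escape of $\theta$ along flows (which rests on growth/confinement properties of $U$ only pinned down by the explicit construction of Section~\ref{sec:potential}), and must justify that finiteness of jumps legitimately turns the hybrid attractivity analysis into an ordinary invariance argument on $\mathcal{F}_c$. By contrast the conceptual crux is light: the single constant $\delta$ of Assumption~\ref{assum:1} simultaneously defines the jump set and excludes every undesired critical point from the reachable invariant set, which is precisely what lets one potential function on $SO(3)\times\mathbb{R}$ play the role formerly filled by a synergistic family.
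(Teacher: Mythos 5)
Your proposal is correct and follows essentially the same route as the paper's proof: the same Lyapunov function $\mathcal{L}=k_RU+\tfrac12\omega_e\T J\omega_e$, the same flow decrease $-k_\omega\|\omega_e\|^2-k_Rk_\theta|\nabla_\theta U|^2$, the same jump decrease by at least $k_R\delta$ giving the jump bound, and the same invariance-principle argument forcing $\psi(R_e\T\nabla_{R_e}U)=0$ and then $\mathcal{F}\cap\Psi_U=\{\mathcal{A}_o\}$ via Assumption~\ref{assum:1}. The completeness/boundedness point you flag (radial unboundedness of $U$ in $\theta$) is treated no more explicitly in the paper, which simply asserts boundedness of maximal solutions and invokes \cite[Proposition 6.10]{goebel2012hybrid}, so it is not a defect specific to your argument.
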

	\begin{proof}
		See   Appendix \ref{sec:them1}.	
	\end{proof} 
	\begin{rem}
		As shown in the proof of Theorem \ref{them:1}, Assumption \ref{assum:1} is the key to avoid the undesired equilibrium points and ensure GAS for the closed-loop system \eqref{eqn:closed-loop1}. 
		Without a strict decrease of the potential function over the jumps, the trajectories may converge to a level set containing one of the undesired equilibrium points.
	\end{rem}

	Now, under the following additional assumptions on the potential function $U$, we will show that the proposed hybrid controller achieves exponential stability.
	\begin{assum} \label{assum:2}
		There exist constant  scalars $\alpha_1>\alpha_2>0$ such that 
		\begin{align} 
			\nabla_{(R,\theta)}U(R,\theta) &\leq \alpha_1 U(R,\theta), \quad \forall (R,\theta)\in SO(3)\times \mathbb{R} \label{eqn:assump2_2} \\
			\nabla_{(R,\theta)}U(R,\theta) &\geq \alpha_2 U(R,\theta), \quad \forall (R,\theta)\in \mathcal{F}  \label{eqn:assump2_3}
		\end{align}
		with $\nabla_{(R,\theta)}U(R,\theta) := \|\psi(R\T \nabla_{R} U(R,\theta))\|^2 + |\nabla_{\theta} U(R,\theta)|^2$ and $\mathcal{F}$ defined in \eqref{eqn:Fset}.
	\end{assum}
	
	\begin{assum}\label{assum:3}
		Given the dynamics   \eqref{eqn:Re} and   \eqref{eqn:dynamics_theta}, there exist constants $c_R,c_\theta>0$ such that 
		\begin{multline} 
			\| \dot{\psi}(R_e\T \nabla_{R_e} U(R_e,\theta))  \|  \\
			\leq  c_R \|\omega_e\| + c_\theta k_\theta |\nabla_{\theta} U(R_e,\theta) |, ~\forall (R_e,\theta)\in \mathcal{F} \label{eqn:dpsi} 
		\end{multline}
		with $\mathcal{F}$ defined in \eqref{eqn:Fset}.  
	\end{assum}
	Assumptions \ref{assum:2} and \ref{assum:3} impose some bounds on the gradients of the potential function and the time derivative of $\psi(R_e\T \nabla_{R_e} U(R_e,\theta))$. This assumptions are not very restrictive as it is going to be shown later once we present the construction of the potential function in Section \ref{sec:potential}. \\
	Let 
	$|x|_{\mathcal{A}}^2  
	:= U(R_e,\theta)+\|\omega_e\|^2 $. Since   $U$ is a potential function on $SO(3)\times \mathbb{R}$ with respect to $\mathcal{A}_o$, it follows from the definitions of $\mathcal{S}$ and $\mathcal{A}$ that $|x|_{\mathcal{A}}= 0$ for all $x\in \mathcal{A}$, and  $|x|_{\mathcal{A}}> 0$ for all $x\in \mathcal{S}\setminus\mathcal{A}$.  
	Now, one can state the following result:
	\begin{prop}\label{prop:1}
		Let $k_R, k_\omega,k_\theta>0$ and suppose that Assumption  \ref{assum:1}-\ref{assum:3} hold. 
		Then, for every compact set $\varOmega_c \subset SO(3)\times \mathbb{R} \times \mathbb{R}^3$ and every initial condition $x(0,0)\in \varOmega_c  \times \mathcal{W}_d $,  the number of jumps is finite, and there exist $k,\lambda>0$ such that,  each maximal solution $x$ to the hybrid  system    \eqref{eqn:closed-loop1} satisfies
		\begin{equation}
			|x(t,j)|_{\mathcal{A}}^2  \leq k \exp(-\lambda(t+j))|x(0,0)|_{\mathcal{A}}^2  \label{eqn:phi_exp} 
		\end{equation} 
		for all $(t,j)\in \dom x$
	\end{prop}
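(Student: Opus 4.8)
The plan is to strengthen the (non-strict) Lyapunov argument underlying Theorem~\ref{them:1} into a strict one valid on the flow set and then to glue the resulting exponential flow decay with the finiteness of jumps; completeness of all maximal solutions is already granted by Theorem~\ref{them:1}. Write $\Phi:=\psi(R_e\T\nabla_{R_e}U(R_e,\theta))$, so $\kappa=2k_R\Phi+k_\omega\omega_e$, and $\mathcal{L}_0(x):=k_R U(R_e,\theta)+\tfrac12\omega_e\T J\omega_e$. First I would record the a priori bounds. Along the flow of $\mathcal{H}$, using the identity $\langle\langle A,x^\times\rangle\rangle=2x\T\psi(A)$, $\dot\theta=-k_\theta\nabla_\theta U$ and $\omega_e\T\Sigma\omega_e=0$, one obtains $\dot{\mathcal{L}}_0=-k_R k_\theta|\nabla_\theta U|^2-k_\omega\|\omega_e\|^2\le 0$; at jumps $R_e,\omega_e$ are unchanged while $U(R_e,\theta^+)=\min_{\theta'\in\Theta}U(R_e,\theta')\le U(R_e,\theta)$, so $\mathcal{L}_0$ does not increase, and in fact $\mathcal{L}_0(x)-\mathcal{L}_0(G(x))=k_R\mu_U(R_e,\theta)\ge k_R\delta$ for $x\in\mathcal{J}_c$ by Assumption~\ref{assum:1}. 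Hence, for $x(0,0)\in\varOmega_c\times\mathcal{W}_d$ with $c_0:=\sup_{\varOmega_c\times\mathcal{W}_d}\mathcal{L}_0<\infty$, at most $N:=\lceil c_0/(k_R\delta)\rceil$ jumps occur and $\mathcal{L}_0(x(t,j))\le c_0$ for all $(t,j)\in\dom x$, which yields uniform bounds on $\|\omega_e\|$ and $U(R_e,\theta)$, hence (with $R_e\in SO(3)$ and $(R_r,\omega_r)\in\mathcal{W}_d$ compact) on $\|\Sigma(R_e,\omega_e,\omega_r)\|$, and, via \eqref{eqn:assump2_2}, $\|\Phi\|^2\le\alpha_1 U\le\alpha_1 c_0/k_R$.

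Next I would introduce the strict Lyapunov function $\mathcal{L}_\epsilon(x):=\mathcal{L}_0(x)+\epsilon\,\omega_e\T\Phi$ with $\epsilon>0$ small. Using $\|\Phi\|^2\le\alpha_1 U$ and Young's inequality, $|\epsilon\omega_e\T\Phi|\le\tfrac{\epsilon}{2}\|\omega_e\|^2+\tfrac{\epsilon\alpha_1}{2}U$, so there exist $\underline c,\bar c>0$ with $\underline c\,|x|_{\mathcal{A}}^2\le\mathcal{L}_\epsilon(x)\le\bar c\,|x|_{\mathcal{A}}^2$, where $|x|_{\mathcal{A}}^2=U(R_e,\theta)+\|\omega_e\|^2$. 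Differentiating along the flow and substituting \eqref{eqn:Jomega_e} and \eqref{eqn:hybrid_feedback1}: the $\mathcal{L}_0$-part contributes $-k_R k_\theta|\nabla_\theta U|^2-k_\omega\|\omega_e\|^2$; the term $\epsilon\dot\omega_e\T\Phi$ produces the negative $-2\epsilon k_R\,\Phi\T J^{-1}\Phi\le-2\epsilon k_R(\lambda_M^J)^{-1}\|\Phi\|^2$ plus an indefinite term $\epsilon\,\Phi\T J^{-1}(\Sigma-k_\omega I_3)\omega_e$ whose coefficient is uniformly bounded by the previous step; and $\epsilon\,\omega_e\T\dot\Phi$ is bounded, by Assumption~\ref{assum:3}, by $\epsilon c_R\|\omega_e\|^2+\epsilon c_\theta k_\theta\|\omega_e\|\,|\nabla_\theta U|$. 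Absorbing the indefinite cross terms into $-k_R k_\theta|\nabla_\theta U|^2$, $-2\epsilon k_R(\lambda_M^J)^{-1}\|\Phi\|^2$ and $-k_\omega\|\omega_e\|^2$ via Young's inequality, and then fixing $\epsilon$ small enough (depending on $\varOmega_c$), I obtain $\dot{\mathcal{L}}_\epsilon\le-a_1|\nabla_\theta U|^2-a_2\|\Phi\|^2-a_3\|\omega_e\|^2$ on $\mathcal{F}_c$ for some $a_1,a_2,a_3>0$. Finally, on $\mathcal{F}$ Assumption~\ref{assum:2} gives $|\nabla_\theta U|^2+\|\Phi\|^2=\nabla_{(R_e,\theta)}U\ge\alpha_2 U$, so $\dot{\mathcal{L}}_\epsilon\le-a_4\,|x|_{\mathcal{A}}^2\le-\gamma\,\mathcal{L}_\epsilon$ with $\gamma:=a_4/\bar c>0$.

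It then remains to patch flows and jumps. On each flow interval $[t_j,t_{j+1}]\times\{j\}$ the estimate above gives $\mathcal{L}_\epsilon(x(t,j))\le e^{-\gamma(t-t_j)}\mathcal{L}_\epsilon(x(t_j,j))$, while at each jump $|x^+|_{\mathcal{A}}^2=U(R_e,\theta^+)+\|\omega_e\|^2\le|x|_{\mathcal{A}}^2$ gives $\mathcal{L}_\epsilon(x^+)\le\bar c|x^+|_{\mathcal{A}}^2\le\bar c|x|_{\mathcal{A}}^2\le\rho\,\mathcal{L}_\epsilon(x)$ with $\rho:=\bar c/\underline c\ge1$. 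Iterating over the at most $N$ jumps yields $\mathcal{L}_\epsilon(x(t,j))\le\rho^{N}e^{-\gamma t}\mathcal{L}_\epsilon(x(0,0))$ for all $(t,j)\in\dom x$, and since $j\le N$ one has $e^{-\gamma t}\le e^{\gamma N}e^{-\gamma(t+j)}$; combined with the sandwich bounds this gives \eqref{eqn:phi_exp} with $\lambda:=\gamma$ and $k:=(\bar c/\underline c)\rho^{N}e^{\gamma N}$, which depend only on $\varOmega_c$ and the system data. I expect the second step to be the main obstacle: since $\Sigma$ contains $(J\omega_e)^\times$ it is not globally bounded, so the cross-term estimates and the admissible choice of $\epsilon$ rely on the a priori bounds of the first step — which is precisely why the result is semi-global rather than global — while the overshoot $\rho^{N}$ also grows with $\varOmega_c$ through $N$; care is also needed to confirm that \eqref{eqn:assump2_2}, \eqref{eqn:assump2_3} and \eqref{eqn:dpsi} are exactly what is required to close, respectively, the norm equivalence (and the bound on $\|\Phi\|$), the conversion of the gradient terms into $-a_4 U$, and the control of $\omega_e\T\dot\Phi$.
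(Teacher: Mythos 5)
Your proof is correct and follows essentially the same route as the paper: the cross-term Lyapunov function $\mathcal{L}_0+\epsilon\,\omega_e\T\Phi$ (the paper uses $\omega_e\T J\Phi$), the a priori bounds from the non-strict Lyapunov function to restrict to a compact sublevel set and bound the jump count, Assumption~\ref{assum:3} to control $\omega_e\T\dot\Phi$, and \eqref{eqn:assump2_3} to convert the gradient terms into $-a_4|x|_{\mathcal{A}}^2$. The only (harmless) deviation is at jumps: the paper shrinks $\varepsilon$ further so that $\mathcal{L}_\varepsilon$ strictly decreases across jumps, whereas you allow a bounded multiplicative overshoot $\rho$ per jump and absorb $\rho^N$ into the constant $k$ using the finite jump count, which yields the same semi-global estimate.
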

	\begin{proof}	
		See Appendix \ref{sec:prop1}.	 
	\end{proof}
	\begin{rem} 
		Proposition \ref{prop:1} shows that the tracking error converges exponentially to the set $\mathcal{A}$ for each initial condition in the compact set $\varOmega_c \subset SO(3)\times \mathbb{R} \times \mathbb{R}^3$ satisfying $\mathcal{A} \subseteq \varOmega_c  \times \mathcal{W}_d $ (Note that $\mathcal{W}_d $ is compact by assumption). It is important to mention that the exponential stability proved in Proposition \ref{prop:1} is referred to as semi-global exponential stability, since the parameters  $k,\lambda>0$ depend on the initial conditions which are restricted to an arbitrary compact subset $\varOmega_c  \times \mathcal{W}_d \subset \mathcal{S}$. Since the number of jumps is finite, the hybrid exponential stability can be viewed as the exponential stability in the classical sense (exponential convergence over time). This situation is sometimes referred to as exponentially stability in the $t$-direction \cite{casau2019robust}.
	\end{rem}

	\section{Hybrid Feedback With Torque Smoothing Mechanism} \label{sec:smooth} 
	In order to remove the discontinuities in the control input $\tau$ (caused by the discrete jumps of $\theta$), we propose the following modified hybrid feedback tracking   scheme: 
	\begin{align}
		\underbrace{
			\begin{array}{l}
				\tau = \Upsilon(R_e, \omega_r,z)  - 2k_R \zeta - k_\omega \omega_e\\
				\dot{\theta} = f(R_e,\theta) \\ 
				\dot{\zeta} = h(R_e,\theta,\zeta)
			\end{array}
		}_{(R_e,\theta,\zeta)\in  \widehat{\mathcal{F}}}
		\underbrace{
			\begin{array}{l}	
				\\
				\\[-0.4cm] 	
				\theta^+  \in {g}(R_e, \theta) \\ 
				\zeta^+ = \zeta
			\end{array}
		}_{(R_e,\theta,\zeta)\in  \widehat{\mathcal{J}}}  \label{eqn:hybrid_feedback2}
	\end{align}
	where $\theta(0,0)\in \mathbb{R}, \zeta(0)\in \mathbb{R}^3$, constants $k_R,k_\omega>0$, the maps $f$ and $g$ are defined in \eqref{eqn:ftheta} and \eqref{eqn:gtheta}, the function $\Upsilon$ is defined in \eqref{eqn:Upsilon_def}, and the function  $h$ is given by
	\begin{align} 
		h(R_e,\theta,\zeta) & =    -    k_\zeta (\zeta-  \psi(R_e\T \nabla_{R_e} U(R_e,\theta)))   \label{eqn:h_def} 
	\end{align}
	with constant   $k_\zeta>0$. 
		In this case, the flow and jump sets are given by
		\begin{align}
			\widehat{\mathcal{F}} &:=  \{(R_e, \theta,\zeta) \in \mho:  \mu_W(R_e,\theta,\zeta)\leq \delta'   \} \label{eqn:Fset2} \\
			\widehat{\mathcal{J}} &:=  \{(R_e, \theta,\zeta) \in \mho:  \mu_W(R_e,\theta,\zeta)\geq \delta'  \}  \label{eqn:Jset2}
		\end{align}
		where   $0<\delta'<\delta$, $\mho: =  SO(3)\times \mathbb{R} \times \mathbb{R}^3$ and
		\begin{align}
			\mu_W(R_e,\theta,\zeta) &: = W(R_e,\theta,\zeta) - \min_{{\theta}'\in \Theta}W(R_e,\theta',\zeta) \label{eqn:defu_W} \\
			W(R_e,\theta,\zeta)  &:= U(R_e,\theta) +   {\varrho}\|\zeta- \psi(R_e\T \nabla_{R_e} U(R_e,\theta))\|^2\label{eqn:defW}
		\end{align}
		with some $\varrho>0$ to be designed later. 
	The main difference between this hybrid control scheme and the previous one in \eqref{eqn:hybrid_feedback1}, is the use of an dynamical variable $\zeta$ that bears the hybrid jumps of $\theta$ resulting in a jump-free control signal. As shown in \eqref{eqn:hybrid_feedback2}-\eqref{eqn:h_def}, the dynamics of $\zeta$ allow to relocate the jumps one integrator away from the control torque. This mechanism leads to a continuous torque input since $\zeta$ is continuous (not necessary differentiable due to the discrete jumps of $\theta$ in the gradient-based term).  
	
	Since $U$ is a potential function on $SO(3)\times \mathbb{R}$ with respect to $\mathcal{A}_o$ and $\psi(R_e\T \nabla_{R_e} U(R_e,\theta))=0$ as $(R_e,\theta)= \mathcal{A}_o$, one can show that   $W$ defined in \eqref{eqn:defW} is a potential function on $\mho$ with respect to $\widehat{\mathcal{A}}_o:=(R_e=I_3,\theta=0,\zeta=0)$, and the set of its critical points is given by $\Psi_{W}:=\{(R_e,\theta,\zeta)\in \mho: (R_e,\theta)\in \Psi_{U}, \zeta = 0 \}$. To ensure that all the undesired critical points of $W$ are located in the jump set $\widehat{\mathcal{J}}$ in \eqref{eqn:Jset2}, we consider the following assumption:
	\begin{assum}\label{assum:4}
		There exists a constant $c_\psi>0$ such that $\|\psi(R_e\T \nabla_{R_e} U(R_e,\theta))\|\leq c_\psi$ for all $(R_e,\theta)\in SO(3)\times \mathbb{R}$.
	\end{assum} 
	\begin{lem} \label{lem:W}
		Let  Assumption \ref{assum:1} and \ref{assum:4} hold, and $0<\delta'<\delta$ and  $0<\varrho<  {(\delta-\delta')}/{c_\psi^2}$, then the following inequality holds:
		\begin{align}
			\mu_W(R_e,\theta,\zeta)  >  \delta',   \quad \forall (R_e,\theta,\zeta)\in \Psi_W \setminus \{\widehat{\mathcal{A}}_o\} \label{eqn:mu_W_delta}.
		\end{align} 
	\end{lem}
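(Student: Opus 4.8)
The plan is to reduce the inequality \eqref{eqn:mu_W_delta} to the basic Assumption \ref{assum:1} by exploiting the explicit description of the critical set $\Psi_W=\{(R_e,\theta,\zeta)\in\mho:(R_e,\theta)\in\Psi_U,\ \zeta=0\}$ stated just above, together with one elementary observation: at every critical point of $U$ the vector $\psi(R_e\T\nabla_{R_e}U(R_e,\theta))$ vanishes. Indeed, $(R_e,\theta)\in\Psi_U$ means $\nabla_{R_e}U(R_e,\theta)=0_{3\times3}$, hence $\psi(R_e\T\nabla_{R_e}U(R_e,\theta))=\psi(0_{3\times3})=0$, so by \eqref{eqn:defW} one gets $W(R_e,\theta,0)=U(R_e,\theta)$. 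Moreover, since $\widehat{\mathcal{A}}_o=(I_3,0,0)$ corresponds to $\mathcal{A}_o=(I_3,0)\in\Psi_U$, every element of $\Psi_W\setminus\{\widehat{\mathcal{A}}_o\}$ has the form $(R_e,\theta,0)$ with $(R_e,\theta)\in\Psi_U\setminus\{\mathcal{A}_o\}$.

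Next I would bound the minimum term appearing in $\mu_W(R_e,\theta,0)$. For every $\theta'\in\Theta$, Assumption \ref{assum:4} gives $\|\psi(R_e\T\nabla_{R_e}U(R_e,\theta'))\|^2\leq c_\psi^2$, so \eqref{eqn:defW} yields $W(R_e,\theta',0)\leq U(R_e,\theta')+\varrho c_\psi^2$. Taking the minimum over the nonempty finite set $\Theta$ on both sides gives $\min_{\theta'\in\Theta}W(R_e,\theta',0)\leq\min_{\theta'\in\Theta}U(R_e,\theta')+\varrho c_\psi^2$. Combining this with $W(R_e,\theta,0)=U(R_e,\theta)$ and the definitions \eqref{eqn:defu_U} and \eqref{eqn:defu_W} leads, for any $(R_e,\theta,0)\in\Psi_W\setminus\{\widehat{\mathcal{A}}_o\}$, to
\[
\mu_W(R_e,\theta,0)=U(R_e,\theta)-\min_{\theta'\in\Theta}W(R_e,\theta',0)\geq\mu_U(R_e,\theta)-\varrho c_\psi^2.
\]

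Finally, I would invoke Assumption \ref{assum:1}, which gives $\mu_U(R_e,\theta)>\delta$ on $\Psi_U\setminus\{\mathcal{A}_o\}$, and the smallness hypothesis $0<\varrho<(\delta-\delta')/c_\psi^2$, which gives $\varrho c_\psi^2<\delta-\delta'$; substituting both into the displayed estimate gives $\mu_W(R_e,\theta,0)>\delta-\varrho c_\psi^2>\delta'$, which is \eqref{eqn:mu_W_delta}. I do not anticipate a genuine obstacle here: the only point requiring care is recognizing that the quadratic correction term in $W$ vanishes identically on $\Psi_U$, so that $W$ and $U$ agree at critical points; once this is noted, the result follows from the one-line estimate above using Assumptions \ref{assum:1} and \ref{assum:4} and the bound on $\varrho$. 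If one wishes to make the argument self-contained, it is worth adding a brief verification that $\Psi_W$ indeed has the asserted form, noting that $\nabla_\zeta W=2\varrho(\zeta-\psi(R_e\T\nabla_{R_e}U(R_e,\theta)))$ forces $\zeta=\psi(R_e\T\nabla_{R_e}U(R_e,\theta))$ at a critical point, on which set the correction term attains its global minimum and hence has zero gradient, so that $\nabla_R W=0$ and $\nabla_\theta W=0$ reduce to $(R_e,\theta)\in\Psi_U$ and therefore $\zeta=0$.
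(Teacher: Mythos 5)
Your proposal is correct and follows essentially the same route as the paper's proof: identify $\Psi_W\setminus\{\widehat{\mathcal{A}}_o\}$ with $(\Psi_U\setminus\{\mathcal{A}_o\})\times\{0\}$, use the vanishing of $\psi(R_e\T\nabla_{R_e}U)$ at critical points to get $W(R_e,\theta,0)=U(R_e,\theta)$, bound the minimum term via Assumption \ref{assum:4} to obtain $\mu_W\geq\mu_U-\varrho c_\psi^2$, and conclude with $\mu_U>\delta$ and $\varrho c_\psi^2<\delta-\delta'$. The optional verification of the form of $\Psi_W$ is a correct addition that the paper only asserts without proof.
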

	The proof of Lemma \ref{lem:W} is given in  Appendix \ref{sec:lemW}. From the definition of $\widehat{\mathcal{J}}$ in \eqref{eqn:Jset2}, Lemma \ref{lem:W} implies that all the undesired critical points of $W$ in \eqref{eqn:defW} are located in the jump set $\widehat{\mathcal{J}}$   (\ie, $\Psi_W\setminus\{\widehat{\mathcal{A}}_o\} \subset \widehat{\mathcal{J}}$) under Assumption \ref{assum:1}, \ref{assum:4} and some  small enough positive constant $\varrho$.

	Define the new state space  $\widehat{\mathcal{S}}:=\mathcal{S} \times   \mathbb{R}^3 $  and the new state $\hat{x}:=(x,\zeta) \in \widehat{\mathcal{S}} $. In view of \eqref{eqn:Re-Jomega_e}, \eqref{eqn:dynamics_theta}-\eqref{eqn:Fset-Jset} and \eqref{eqn:hybrid_feedback2}-\eqref{eqn:h_def}, one has the following hybrid closed-loop system:
	\begin{equation}  
		\begin{cases}
			\dot{\hat{x}}  ~~\in \widehat{F}(\hat{x}), & \hat{x} \in  \widehat{\mathcal{F}}_c:=\{\hat{x}\in \widehat{\mathcal{S}}  : (R_e,\theta,\zeta)\in \widehat{\mathcal{F}}\} \\
			\hat{x}^+ \in \widehat{G}(\hat{x}), & \hat{x} \in  \widehat{\mathcal{J}}_c:=\{\hat{x}\in \widehat{\mathcal{S}}  : (R_e,\theta,\zeta)\in \widehat{\mathcal{J}}\}
		\end{cases}  \label{eqn:closed-loop2}
	\end{equation}
	where the flow and jump maps are given by
	\begin{align}
		\widehat{F}(\hat{x})&:= \begin{pmatrix}
			R_e \omega_e^\times  \\
			f(R_e,\theta) \\
			J^{-1}(\Sigma(R_e,\omega_e,\omega_r)\omega_e   -2k_R \zeta - k_\omega \omega_e)   \\
			R_r \omega_r^\times \\
			m\mathbb{B} \\		
			-    k_\zeta (\zeta-  \psi(R_e\T \nabla_{R_e} U(R_e,\theta))) 
		\end{pmatrix}  \label{eqn:hatF}\\
		\widehat{G}(\hat{x})&:= \begin{pmatrix}
			R_e, ~
			{g}(R_e,\theta),  ~
			\omega_e,~
			R_r,~
			\omega_r, ~
			\zeta 
		\end{pmatrix}
	\end{align}
	with $\Sigma, f$ and $g$ defined in  \eqref{eqn:Sigma_def}, \eqref{eqn:ftheta} and  \eqref{eqn:gtheta}, respectively. One can verify that $\widehat{\mathcal{F}}_c\cup \widehat{\mathcal{J}_c} = \widehat{\mathcal{S}}$, $\widehat{\mathcal{F}}_c$ and $\widehat{\mathcal{J}}_c$ are closed, and the hybrid closed-loop system \eqref{eqn:closed-loop2} satisfies the hybrid basic conditions \cite[Assumption 6.5]{goebel2012hybrid}.  
	The properties of the set $\widehat{\mathcal{A}}:= \{\hat{x} \in \widehat{\mathcal{S}}  : (R _e,\theta,\zeta)=  \widehat{\mathcal{A}}_o,  \omega_e=0 \} $ for the closed-loop system \eqref{eqn:closed-loop2} are stated in the following theorem:
	\begin{thm}\label{them:2} 
		Let $k_R, k_\omega,k_\theta >0$, and suppose that Assumption \ref{assum:1}, \ref{assum:3} and \ref{assum:4} hold.
		Then, there exist constants $k^*_\zeta>0$ and $0<\delta'<\delta$ such that, for every $k_\zeta>k_\zeta^*$ and   $0<\varrho<  {(\delta-\delta')}/{c_\psi^2}$, the set $\widehat{\mathcal{A}}$ is globally asymptotically stable for the hybrid system  \eqref{eqn:closed-loop2}   and the number of jumps is finite. 
	\end{thm}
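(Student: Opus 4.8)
The plan is to run the standard hybrid Lyapunov argument on the augmented state $\hat{x}$, the only genuinely new ingredient being the control of the filter error created by $\zeta$. Introduce $\tilde{\zeta} := \zeta - \psi(R_e\T\nabla_{R_e}U(R_e,\theta))$, so that the torque in \eqref{eqn:hybrid_feedback2} reads $\tau = \Upsilon(R_e,\omega_r,z) - 2k_R\psi(R_e\T\nabla_{R_e}U(R_e,\theta)) - 2k_R\tilde{\zeta} - k_\omega\omega_e$, i.e. it coincides with the torque of Theorem~\ref{them:1} up to the perturbation $-2k_R\tilde{\zeta}$, with $\dot{\tilde{\zeta}} = -k_\zeta\tilde{\zeta} - \dot{\psi}(R_e\T\nabla_{R_e}U(R_e,\theta))$ along flows and $\tilde{\zeta}^+ = \tilde{\zeta} + \psi(R_e\T\nabla_{R_e}U(R_e,\theta)) - \psi(R_e\T\nabla_{R_e}U(R_e,\theta^+))$ across jumps. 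As Lyapunov function I would take
\begin{align*}
\mathcal{L}(\hat{x}) &:= k_R W(R_e,\theta,\zeta) + \tfrac12\omega_e\T J\omega_e \\
&= k_R U(R_e,\theta) + k_R\varrho\|\tilde{\zeta}\|^2 + \tfrac12\omega_e\T J\omega_e ,
\end{align*}
which is continuous on $\widehat{\mathcal{S}}$, independent of $(R_r,\omega_r)$, vanishes exactly on $\widehat{\mathcal{A}}$, is positive elsewhere (since $W$ is a potential function on $\mho$ with respect to $\widehat{\mathcal{A}}_o$), and has compact sublevel sets in the $(R_e,\theta,\zeta,\omega_e)$ variables.

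The flow step carries the real computation. Differentiating $\mathcal{L}$ along $\widehat{F}$ and using $\omega_e\T\Sigma(R_e,\omega_e,\omega_r)\omega_e=0$, the cancellation of $\Upsilon$, and the identities $\dot{U} = 2\omega_e\T\psi(R_e\T\nabla_{R_e}U) - k_\theta|\nabla_\theta U|^2$ and $\dot{\zeta} = -k_\zeta\tilde{\zeta}$, the terms $\pm 2k_R\omega_e\T\psi(R_e\T\nabla_{R_e}U)$ cancel (this is exactly why $\mathcal{L}$ is weighted by $k_R$), leaving
\begin{multline*}
\dot{\mathcal{L}} = -k_Rk_\theta|\nabla_\theta U|^2 - k_\omega\|\omega_e\|^2 - 2k_R\varrho k_\zeta\|\tilde{\zeta}\|^2 \\
- 2k_R\omega_e\T\tilde{\zeta} - 2k_R\varrho\,\tilde{\zeta}\T\dot{\psi}(R_e\T\nabla_{R_e}U) .
\end{multline*}
Bounding $\|\dot{\psi}(R_e\T\nabla_{R_e}U(R_e,\theta))\| \le c_R\|\omega_e\| + c_\theta k_\theta|\nabla_\theta U|$ by Assumption~\ref{assum:3} and absorbing the three cross/indefinite terms through Young's inequality (charging fractions of $k_\omega\|\omega_e\|^2$ and $k_Rk_\theta|\nabla_\theta U|^2$) leaves a residual coefficient of $\|\tilde{\zeta}\|^2$ equal to $-\big(2k_R\varrho k_\zeta - C\big)$ with $C = C(k_R,k_\omega,k_\theta,\varrho,c_R,c_\theta) > 0$; choosing $k_\zeta^* := C/(2k_R\varrho)$ gives, for every $k_\zeta > k_\zeta^*$, $\dot{\mathcal{L}} \le -c_1|\nabla_\theta U|^2 - c_2\|\omega_e\|^2 - c_3\|\tilde{\zeta}\|^2 \le 0$ on $\widehat{\mathcal{F}}_c$ for some $c_1,c_2,c_3 > 0$. (Assumption~\ref{assum:3} is invoked here on the flow set $\widehat{\mathcal{F}}$, which is harmless for the potential functions built in Section~\ref{sec:potential}, where the bound holds globally.)

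Since $\omega_e$ is unchanged at jumps, $\mathcal{L}(\hat{x}^+) - \mathcal{L}(\hat{x}) = k_R\big(W(R_e,\theta^+,\zeta) - W(R_e,\theta,\zeta)\big)$, and using the definition of $\widehat{\mathcal{J}}$ (on which $\mu_W \ge \delta'$) together with the jump map and Assumption~\ref{assum:4} — mirroring the jump estimate in the proof of Theorem~\ref{them:1}, now with $W$ in place of $U$ — this difference is at most $-k_R\delta' < 0$. Hence $\mathcal{L}$ is non-increasing along flows and drops by at least $k_R\delta'$ at each jump; since $\mathcal{L} \ge 0$, the number of jumps is finite. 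Every maximal solution therefore stays in the compact set $\{\mathcal{L} \le \mathcal{L}(\hat{x}(0,0))\} \times \mathcal{W}_d$ and is complete, and stability of $\widehat{\mathcal{A}}$ follows because for $r < k_R\delta'$ the sublevel set $\{\mathcal{L} \le r\}$ is disjoint from $\widehat{\mathcal{J}}_c$, so from a small enough neighborhood of $\widehat{\mathcal{A}}$ no jump ever occurs and $\mathcal{L}$ acts as an ordinary non-increasing Lyapunov function. For attractivity, after the last jump the solution flows forever in $\widehat{\mathcal{F}}_c$ with $\dot{\mathcal{L}} \le -c_1|\nabla_\theta U|^2 - c_2\|\omega_e\|^2 - c_3\|\tilde{\zeta}\|^2$; by the hybrid invariance principle it converges to the largest invariant set on which $\omega_e \equiv 0$, $\tilde{\zeta} \equiv 0$ and $\nabla_\theta U \equiv 0$, where $\dot{\omega}_e \equiv 0$ forces $\zeta \equiv 0$ (from $J\dot{\omega}_e = -2k_R\zeta$ when $\omega_e = 0$), hence $\psi(R_e\T\nabla_{R_e}U) = \zeta - \tilde{\zeta} = 0$, hence $\nabla_{R_e}U = 0$ (a skew-symmetric matrix vanishes iff its $\psi$-image does), so $(R_e,\theta,\zeta) \in \Psi_W$; since $\Psi_W \setminus \{\widehat{\mathcal{A}}_o\} \subset \widehat{\mathcal{J}}$ by Lemma~\ref{lem:W} while the solution remains in $\widehat{\mathcal{F}}_c$, the only possibility is $\widehat{\mathcal{A}}_o$, which yields convergence to $\widehat{\mathcal{A}}$ and hence global asymptotic stability.

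I expect the main obstacle to be the flow estimate: handling $\dot{\psi}(R_e\T\nabla_{R_e}U)$ through Assumption~\ref{assum:3} and then exhibiting a single threshold $k_\zeta^*$ for which the damping $-2k_R\varrho k_\zeta\|\tilde{\zeta}\|^2$ simultaneously dominates the Coriolis-free coupling $-2k_R\omega_e\T\tilde{\zeta}$ and the term $-2k_R\varrho\,\tilde{\zeta}\T\dot{\psi}$ without spoiling the negativity in $\|\omega_e\|^2$ and $|\nabla_\theta U|^2$. The remaining steps — the uniform jump decrease, completeness, stability and the invariance argument — are routine adaptations of what is already needed for Theorem~\ref{them:1}.
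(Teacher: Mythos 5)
Your proposal is correct and follows essentially the same route as the paper's proof in Appendix~\ref{sec:them2}: the same Lyapunov function $k_R W + \tfrac12\omega_e\T J\omega_e$, the same flow computation leading to a quadratic form in $(\|\omega_e\|,|\nabla_\theta U|,\|\zeta-\psi\|)$ whose definiteness fixes the threshold $k_\zeta^*$, the same jump decrease by $k_R\delta'$ via Lemma~\ref{lem:W}, and the same invariance argument forcing $\zeta=\psi=0$ from $\dot\omega_e\equiv 0$. The only differences are cosmetic (Young's inequalities in place of the matrix $P_4$, and an explicit sublevel-set argument for stability where the paper cites a standard hybrid stability theorem).
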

	\begin{proof}
		See   Appendix \ref{sec:them2}.		
	\end{proof} 
	Following similar steps as in the proof of Proposition \ref{prop:1}, one can also show that, under the additional Assumption   \ref{assum:2}, the proposed hybrid feedback, with the torque smoothing mechanism, guarantees semi-global exponential stability. Note that the high gain condition on $k_\zeta$ in Theorem \ref{them:2}  can be relaxed by considering the following dynamics for $\zeta$:
	\begin{align} 
		\dot{\zeta}  &=   \varphi(\hat{x}) -    k_\zeta \left( \zeta- \psi(R_e\T \nabla_{R_e} U(R_e,\theta))\right)   \label{eqn:h_def2} 
	\end{align}
	where  $\varphi(\hat{x})  := \dot{\psi}(R_e\T \nabla_{R_e} U(R_e,\theta)) +  \frac{1}{\varrho}  \omega_e$ with some constants $k_\zeta>0$ and $0<\varrho< {(\delta-\delta')}/{c_\psi^2}$. With this modification, global asymptotic stability is also guaranteed as in Theorem \ref{them:2}, and the proof is omitted here.

	\section{Hybrid Feedback Without Velocity Measurements}\label{sec:velocity_free}
	Inspired by the work in \cite{TayebiCDC2006,tayebi2008unit,berkane2018hybrid}, we propose a new hybrid feedback for global attitude tracking without using the measurements of angular velocity $\omega$. In practice, obviating the need of the angular velocity measurements is of great interest in applications relying on expensive and prone-to-failure gyroscopes. In the case where gyroscopes are available, this velocity-free tracking controller can also be used as a backup scheme triggered by gyro failure. 
	
	Consider the auxiliary state $(\bar{R},\bar{\theta}) \in SO(3)\times \mathbb{R}$ and the following   hybrid auxiliary system:
	\begin{align}
		&\underbrace{
			\begin{array}{l}
				\dot{\bar{R}} = \bar{R} (\tilde{R} \beta)^\times \\
				\dot{\bar{\theta}} ~ = f(\tilde{R},\bar{\theta})
			\end{array}
		}_{(\tilde{R},\bar{\theta}) \in \mathcal{F}} ~ 
		\underbrace{
			\begin{array}{l}
				\bar{R}^+ = \bar{R}  \\
				\bar{\theta}^+ ~~ \in~ g(\tilde{R},\bar{\theta})
			\end{array}
		}_{(\tilde{R},\bar{\theta}) \in \mathcal{J}} \label{eqn:bar_R} 
	\end{align}
	where $\bar{R}(0)\in SO(3), \bar{\theta}(0,0)\in \mathbb{R}$, $\tilde{R} = \bar{R}\T R_e$, the flow and jump sets $\mathcal{F}$ and $\mathcal{J}$  are defined in  \eqref{eqn:Fset} and \eqref{eqn:Jset}, respectively, and   $\beta$ is given by
	\begin{equation}
		\beta  =  \Gamma \psi(\tilde{R}\T \nabla_{\tilde{R}} U(\tilde{R},\bar{\theta})) \label{eqn:beta}
	\end{equation}
	with a symmetric positive definite matrix $\Gamma$. The dynamics of the auxiliary variable $\bar{R}$ are inspired from \cite{tayebi2008unit,berkane2018hybrid}, and the maps $f$ and $g$ are given in \eqref{eqn:ftheta} and \eqref{eqn:gtheta}, respectively. 
	
	We propose the following velocity-free hybrid feedback tracking   scheme: 
	\begin{align}
		\underbrace{
			\begin{array}{l}
				\tau = \Upsilon(R_e, \omega_r,z)  - \bar{\kappa}(R_e,\theta,\tilde{R},\bar{\theta})\\
				\dot{\theta} = f(R_e,\theta)  \\
			\end{array}
		}_{(R_e,\theta)\in  {\mathcal{F}}}
		\underbrace{
			\begin{array}{l}	
				\\
				\\[-0.4cm] 	
				\theta^+  \in {g}(R_e, \theta)  
			\end{array}
		}_{(R_e,\theta)\in  {\mathcal{J}}}  \label{eqn:hybrid_feedback3}
	\end{align}
	where the hybrid dynamics of the auxiliary state $(\bar{R},\bar{\theta})$ are given in \eqref{eqn:bar_R}, and the function  $\bar{\kappa}$ is given by
	\begin{multline} 	 
		\bar{\kappa}(R_e,\theta,\tilde{R},\bar{\theta}) := 2 k_R\psi(R_e\T \nabla_{R_e} U(R_e,\theta))   \\
		+  2k_\beta \psi(\tilde{R}\T \nabla_{\tilde{R}} U(\tilde{R},\bar{\theta}))  \label{eqn:kappa2}  
	\end{multline}
	with constants $k_R,k_\beta>0$. The flow and jump sets $\mathcal{F}$ and $\mathcal{J}$  are defined in  \eqref{eqn:Fset} and \eqref{eqn:Jset}, respectively.

	Instead of using the angular velocity tracking error $\omega_e$ as in the hybrid controllers \eqref{eqn:hybrid_feedback1} and \eqref{eqn:hybrid_feedback2}, a new term  generated from the gradient of $U(\tilde{R},\bar{\theta})$  is considered in the design of the control input $\tau$ in \eqref{eqn:hybrid_feedback3}. This term, relying on the output of the auxiliary system \eqref{eqn:bar_R}-\eqref{eqn:beta}, allows to generate the necessary damping in the absence of the angular velocity measurements. In fact, an appropriate design of the input  $\beta$ of the auxiliary system, ensures the convergence of $\beta$ to $\omega_e$ as $\tilde{R} \rightarrow I_3$, which consequently leads to $R_e \rightarrow I_3$ and $\omega_e \rightarrow 0$. 
	Fig. \ref{fig:diagram4} illustrates the proposed velocity-free hybrid feedback strategy.
	
	\begin{figure}[!ht]
		\centering
		\includegraphics[width=0.9\linewidth]{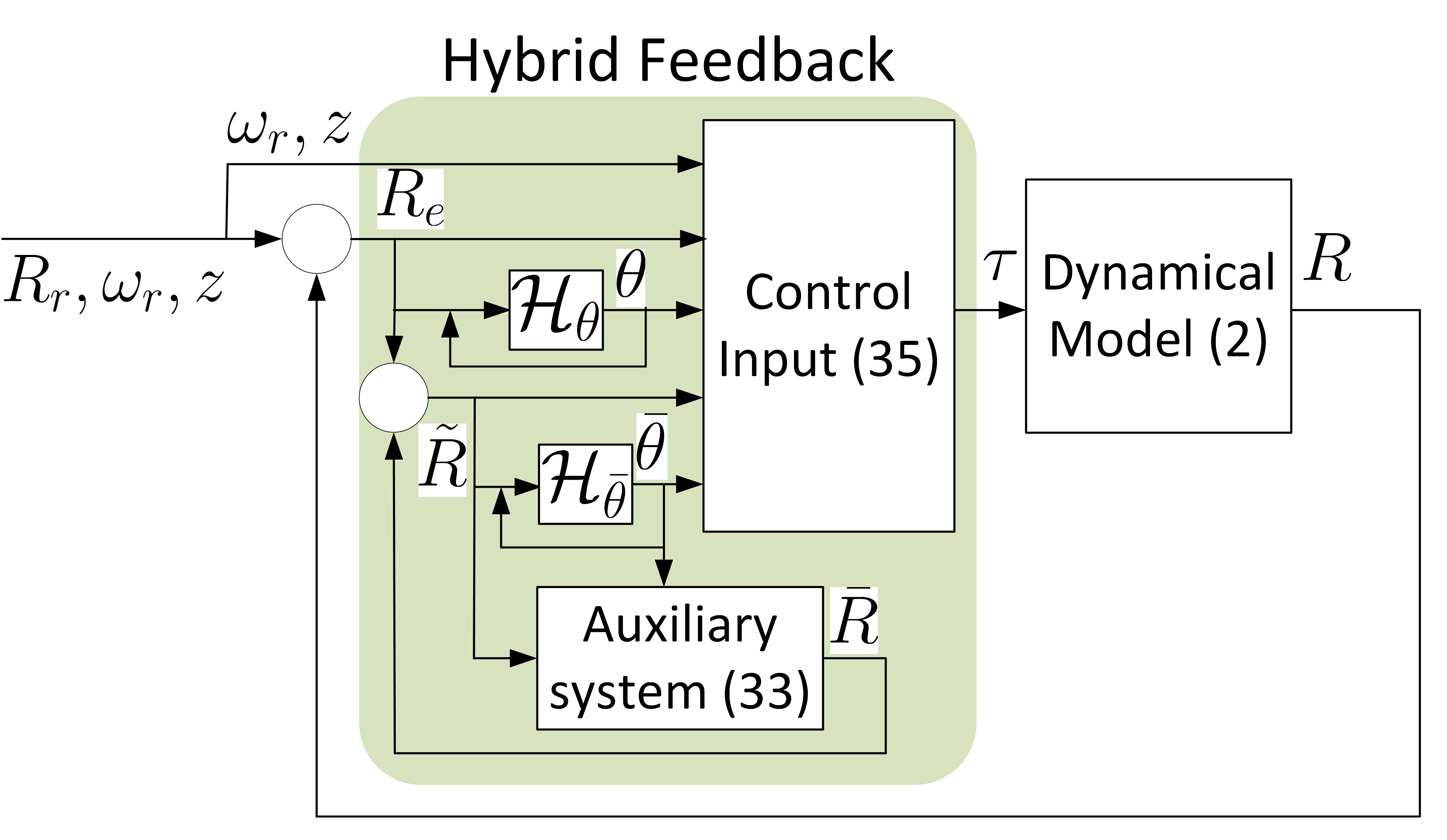}
		\caption{Velocity-free hybrid feedback strategy for global attitude tracking using a hybrid auxiliary system.}
		\label{fig:diagram4}
	\end{figure}
	
	Define the new state space  $\overline{\mathcal{S}}:=\mathcal{S} \times SO(3) \times   \mathbb{R} $  and the new state $\bar{x}:=(x,\tilde{R},\bar{\theta}) \in \overline{\mathcal{S}} $. In view of \eqref{eqn:Re-Jomega_e}, \eqref{eqn:dynamics_theta}-\eqref{eqn:Fset-Jset} and \eqref{eqn:bar_R}-\eqref{eqn:hybrid_feedback3}, one has the following closed-loop system: 
	\begin{equation}
		\begin{cases}
			\dot{\bar{x}}  ~~\in \overline{F}(\bar{x}), & \bar{x} \in  \overline{\mathcal{F}}_c:= \mathcal{F}_c \times \mathcal{F} \\
			\bar{x}^+ \in \overline{G}(\bar{x}), & \bar{x} \in  \overline{\mathcal{J}}_c:= \overline{\mathcal{J}}_{c1} \cup \overline{\mathcal{J}}_{c2}
		\end{cases} \label{eqn:closed-loop3}
	\end{equation}
	where $\overline{\mathcal{J}}_{c1}:= \mathcal{J}_c \times SO(3) \times \mathbb{R}$ and $ \overline{\mathcal{J}}_{c2}:= \mathcal{S}  \times \mathcal{J}$, and
	the flow and jump maps are given by
	\begin{align}
		\overline{F}(\hat{x})&:= \begin{pmatrix}
			R_e \omega_e^\times \\
			f(R_e,\theta)\\
			J^{-1}(\Sigma(R_e,\omega_e,\omega_r)\omega_e   - \bar{\kappa}(R_e,\theta,\tilde{R},\bar{\theta}))\\
			R_r \omega_r^\times \\
			m\mathbb{B} \\		
			\tilde{R}(\omega_e - \Gamma \psi(\tilde{R}\T \nabla_{\tilde{R}} U(\tilde{R},\bar{\theta})) )^\times \\
			f(\tilde{R},\bar{\theta})
		\end{pmatrix}  \label{eqn:barF}\\
		\overline{G}(\bar{x})&:= \begin{pmatrix}
			R_e, 
			g_\theta(R_e,\theta),  
			\omega_e, 
			R_r, 
			\omega_r, 
			\tilde{R}, 
			g_{\bar{\theta}}(\tilde{R},\bar{\theta}) 
		\end{pmatrix}
	\end{align}
	with   $\Sigma, f$ and $g$   defined in  \eqref{eqn:Sigma_def}, \eqref{eqn:ftheta} and \eqref{eqn:gtheta}, respectively. The function  $g_\theta(R_e,\theta)$ is defined as: $g_\theta(R_e,\theta) = g(R_e,\theta)$ if $(R_e,\theta)\in \mathcal{J}$ (\ie, $\bar{x}\in \overline{\mathcal{J}}_{c1}$) otherwise $g_\theta(R_e,\theta) =  \theta $, and the function $g_{\bar{\theta}}(\tilde{R},\bar{\theta})$ is defined as: $g_{\bar{\theta}}(\tilde{R},\bar{\theta}) = g(\tilde{R},\bar{\theta})$ if $(\tilde{R},\bar{\theta})\in \mathcal{J}$ (\ie, $\bar{x}\in \overline{\mathcal{J}}_{c2}$) otherwise $g_{\bar{\theta}}(\tilde{R},\bar{\theta}) =  \bar{\theta} $. One can verify that $\overline{\mathcal{F}}_c\cup \overline{\mathcal{J}_c} = \overline{\mathcal{S}}$, sets $\overline{\mathcal{F}}_c$ and $\overline{\mathcal{J}}_c$ are closed, and the hybrid  system   \eqref{eqn:closed-loop3} satisfies the hybrid basic conditions \cite[Assumption 6.5]{goebel2012hybrid}. 
	Now, one can state the following  result:
	\begin{thm}\label{them:3}
		Let $k_R, k_\beta,k_\theta>0$, $\Gamma=\Gamma\T$ be positive definite, and suppose that Assumption  \ref{assum:1} holds.  
		Then,  the set $\overline{\mathcal{A}}:= \{\bar{x} \in \overline{\mathcal{S}} : ( R _e,\theta)= (\tilde{R},\bar{\theta})= \mathcal{A}_o, \omega_e=0 \} $ is globally asymptotically stable for the hybrid  system \eqref{eqn:closed-loop3} and the number of jumps is finite.  
	\end{thm}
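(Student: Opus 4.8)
\emph{Proof plan.} The argument parallels the one for Theorem \ref{them:1}: a Lyapunov function that is non-increasing along flows and strictly decreasing across jumps, followed by a hybrid invariance-principle analysis, with the angular-velocity damping term now replaced by the output $\beta$ of the auxiliary system \eqref{eqn:bar_R}--\eqref{eqn:beta}.

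\emph{Step 1 (closed loop for $\tilde R$).} Using $\dot{\bar R}=\bar R(\tilde R\beta)^\times$ from \eqref{eqn:bar_R}, $\dot R_e=R_e\omega_e^\times$, and the identity $R x^\times R\T=(Rx)^\times$ for $R\in SO(3)$, one computes $\dot{\tilde R}=\dot{\bar R}\T R_e+\bar R\T\dot R_e=-(\tilde R\beta)^\times\tilde R+\tilde R\omega_e^\times=\tilde R(\omega_e-\beta)^\times$, which is exactly the $\tilde R$-component of $\overline F$ in \eqref{eqn:barF}. Hence along flows $(\tilde R,\bar\theta)$ is driven by the residual $\omega_e-\beta$ in the same way $(R_e,\theta)$ is driven by $\omega_e$.

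\emph{Step 2 (Lyapunov function, flow estimate).} Consider $\mathcal L(\bar x):=k_R U(R_e,\theta)+k_\beta U(\tilde R,\bar\theta)+\tfrac12\omega_e\T J\omega_e$. Since $U$ is a potential function on $SO(3)\times\mathbb R$ w.r.t.\ $\mathcal A_o$ and $J=J\T>0$, $\mathcal L$ is continuous, non-negative, vanishes exactly on $\overline{\mathcal A}$, and is sandwiched between class-$\mathcal K_\infty$ functions of $|\bar x|_{\overline{\mathcal A}}$ on sublevel sets. Write $\psi_R:=\psi(R_e\T\nabla_{R_e}U(R_e,\theta))$ and $\psi_{\tilde R}:=\psi(\tilde R\T\nabla_{\tilde R}U(\tilde R,\bar\theta))$. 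Using the left-invariant metric and $\langle\langle A,x^\times\rangle\rangle=2x\T\psi(A)$, together with $\dot\theta=f(R_e,\theta)=-k_\theta\nabla_\theta U$ and $\dot{\bar\theta}=f(\tilde R,\bar\theta)=-k_\theta\nabla_{\bar\theta}U$, one gets $\dot U(R_e,\theta)=2\omega_e\T\psi_R-k_\theta(\nabla_\theta U)^2$ and $\dot U(\tilde R,\bar\theta)=2(\omega_e-\beta)\T\psi_{\tilde R}-k_\theta(\nabla_{\bar\theta}U)^2$. Substituting $\tau$ from \eqref{eqn:hybrid_feedback3}, the term $\Upsilon$ cancels the drift in \eqref{eqn:Jomega_e} and $\Sigma$ is skew-symmetric, so $\omega_e\T J\dot\omega_e=-\omega_e\T\bar\kappa=-2k_R\omega_e\T\psi_R-2k_\beta\omega_e\T\psi_{\tilde R}$. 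Summing and using $\beta=\Gamma\psi_{\tilde R}$, the cross terms cancel and $\dot{\mathcal L}=-k_Rk_\theta(\nabla_\theta U)^2-k_\beta k_\theta(\nabla_{\bar\theta}U)^2-2k_\beta\psi_{\tilde R}\T\Gamma\psi_{\tilde R}\le 0$.

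\emph{Step 3 (jumps, completeness, stability).} On $\overline{\mathcal J}_{c1}$ only $\theta$ jumps to $g(R_e,\theta)$ and on $\overline{\mathcal J}_{c2}$ only $\bar\theta$ jumps to $g(\tilde R,\bar\theta)$, with $\omega_e$, $R_e$, $\tilde R$ unchanged; by \eqref{eqn:gtheta} and Assumption \ref{assum:1}, in either case $\mathcal L^+-\mathcal L\le-\min\{k_R,k_\beta\}\,\mu_U\le-\min\{k_R,k_\beta\}\,\delta<0$. Thus $\mathcal L$ is non-increasing along flows and drops by at least a fixed amount at each jump, so the number of jumps along any maximal solution is finite. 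Forward invariance of sublevel sets of $\mathcal L$ yields stability of $\overline{\mathcal A}$. Since $\omega_e$ and $\dot\omega_e$ are bounded on sublevel sets, $R_e,\tilde R\in SO(3)$, the $(R_r,\omega_r)$ dynamics are complete by \eqref{eqn:dynamics_R_r}, and after the last jump the solution flows forever, every maximal solution is complete and precompact; the hybrid closed loop satisfies the basic conditions.

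\emph{Step 4 (attractivity via invariance).} Apply the hybrid invariance principle: the jump term contributes nothing (the jump decrease is bounded away from $0$), so each solution converges to the largest weakly invariant set contained in $\{\dot{\mathcal L}=0\}\cap\overline{\mathcal F}_c$. There $\nabla_\theta U(R_e,\theta)\equiv0$, $\nabla_{\bar\theta}U(\tilde R,\bar\theta)\equiv0$ and, since $\Gamma>0$, $\psi_{\tilde R}\equiv0$, hence $\nabla_{\tilde R}U(\tilde R,\bar\theta)\equiv0$ (as $\tilde R\T\nabla_{\tilde R}U\in\mathfrak{so}(3)$, so $\psi$ is injective on it); thus $(\tilde R,\bar\theta)\in\Psi_U$. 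Being in the flow set $\mathcal F=\{\mu_U\le\delta\}$ while every undesired critical point has $\mu_U>\delta$ (Assumption \ref{assum:1}) forces $(\tilde R,\bar\theta)=\mathcal A_o$, i.e.\ $\tilde R\equiv I_3$; then $\dot{\tilde R}=\tilde R(\omega_e-\beta)^\times=0$ and $\beta=\Gamma\psi_{\tilde R}=0$ give $\omega_e=0$. Substituting into \eqref{eqn:Jomega_e} yields $\bar\kappa=0$, hence $\psi_R=0$, i.e.\ $\nabla_{R_e}U(R_e,\theta)=0$, which with $\nabla_\theta U=0$ gives $(R_e,\theta)\in\Psi_U$ and, again by flow-set membership, $(R_e,\theta)=\mathcal A_o$. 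Therefore the invariant set is contained in $\overline{\mathcal A}$, establishing global attractivity and hence global asymptotic stability.

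\emph{Main obstacle.} Unlike the velocity-feedback case, $\dot{\mathcal L}$ contains no $\|\omega_e\|^2$ term, so $\omega_e\to0$ cannot be read off directly; it must be recovered indirectly from Step 4 — first pinning $(\tilde R,\bar\theta)$ at $\mathcal A_o$ using Assumption \ref{assum:1} (undesired critical points lie in the interior of the jump set and so cannot belong to an invariant set inside the flow set), then using $\dot{\tilde R}=0$ together with $\beta=0$ to conclude $\omega_e=0$, and only afterwards recovering $\nabla_{R_e}U=0$ from $\bar\kappa=0$. Carrying this out rigorously also requires the completeness/precompactness bookkeeping needed to invoke the hybrid invariance principle.
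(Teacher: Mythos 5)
Your proposal is correct and follows essentially the same route as the paper's proof: the composite Lyapunov function $k_R U(R_e,\theta)+k_\beta U(\tilde R,\bar\theta)+\tfrac12\omega_e\T J\omega_e$, the cancellation of the cross terms via $\beta=\Gamma\psi(\tilde R\T\nabla_{\tilde R}U)$, the uniform jump decrease of at least $\min\{k_R,k_\beta\}\delta$, and the invariance-principle chain that first pins $(\tilde R,\bar\theta)$ at $\mathcal{A}_o$, then extracts $\omega_e=0$ from $\dot{\tilde R}=0$ and $\beta=0$, and finally recovers $(R_e,\theta)=\mathcal{A}_o$ from the $\omega_e$-dynamics. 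Your "main obstacle" remark accurately identifies the indirect recovery of $\omega_e\to 0$, which is exactly how the paper's argument is structured.
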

	\begin{proof}
		See   Appendix \ref{sec:them3}.	
	\end{proof}
	\begin{rem} 
		Following similar steps as in the proof of Proposition \ref{prop:1}, one can also show that, under the additional Assumptions   \ref{assum:2} and \ref{assum:3}, the proposed velocity-free hybrid tracking controller guarantees semi-global exponential stability. Moreover, similar to Section \ref{sec:smooth}, the proposed velocity-free hybrid attitude tracking controller \eqref{eqn:hybrid_feedback3} can be further extended with a torque smoothing mechanism by filtering the terms $\psi(R_e\T \nabla_{R_e} U(R_e,\theta))$ and $\psi(\tilde{R}\T \nabla_{\tilde{R}} U(\tilde{R},\bar{\theta}))$  as in Section \ref{sec:smooth}
		to obtain a jump-free torque input.  
	\end{rem}

	\section{Construction of The Potential Function on $SO(3)\times \mathbb{R}$} \label{sec:potential}
	Our proposed designs in the previous sections rely on the existence of a potential function $U$  on $SO(3)\times \mathbb{R}$  with respect to $\mathcal{A}_o$, satisfying Assumptions \ref{assum:1}-\ref{assum:4}.  In this  section, we will provide  a systematic procedure for the construction of such potential function using the angular warping techniques inspired by \cite{mayhew2011synergistic}.
	
	Consider the following   transformation map  $\mathcal{T}: SO(3)\times \mathbb{R} \to SO(3)$ :
	\begin{equation}
		\mathcal{T}(R,\theta) =  R \mathcal{R}_a(\theta,u)   \label{eqn:transfT}
	\end{equation}
	where $R\in SO(3)$, $u\in \mathbb{S}^2$ is a constant unit vector and  $\theta \in \mathbb{R}$ is a real-valued variable with hybrid dynamics specified in Section \ref{sec:min-resetting}. From \eqref{eqn:transfT}, $\mathcal{T}$ applies a rotation by an angle $\theta$ to $R$ about the unit vector $u$. The main difference compared to the transformation maps considered in \cite{mayhew2011synergistic,mayhew2011hybrid,mayhew2013synergistic,berkane2017construction,berkane2017hybrid}, is that the angular warping angle $\theta$ considered in \eqref{eqn:transfT} is an independent real-valued variable with hybrid flows and jumps.  
	
	Consider a modified trace function   $V(R) = \tr(A(I_3-R))$ with $A=A\T$ being a positive definite matrix. It follows from \cite{mayhew2013synergistic,mahony2008nonlinear} that the set of all the critical points of $V(R)$ is given by $\Psi_V  = \{I_3\} \cup \mathcal{R}_a(\pi,\mathcal{E}(A)) $ with $\mathcal{E}(A)$ denoting the set of unit eigenvectors of $A$. 
	Let us introduce the following real-valued function on $SO(3)\times \mathbb{R}$ as
	\begin{align}
		U(R,\theta)
		&=\tr(A(I-\mathcal{T}(R,\theta))) + \frac{\gamma }{2  } \theta^2    \label{eqn:URtheta}	
	\end{align}
	with some constant $\gamma >0$ to be designed. The first term of $U$ is modified from the potential function $V$ inspired by \cite{mayhew2011hybrid,mayhew2011synergistic}, and the second term is a quadratic term in $\theta$.  From the definition of  $\mathcal{T}$ in \eqref{eqn:transfT}, one can easily verify that $U(R,\theta)\geq 0$ for all $(R,\theta)\in SO(3)\times \mathbb{R}$, and $U(R,\theta) = 0$ if and only if $(R=I_3,\theta=0)$. Hence, $U$ is a potential function on $SO(3)\times \mathbb{R}$ with respect to $\mathcal{A}_o$.
	The following lemma provides useful properties of the potential function $U$.
	\begin{lem}\label{lemma:U}
		Let   $A=A\T$ be a  positive definite matrix.   Consider  the potential function $U$ defined in \eqref{eqn:URtheta}, and the trajectories generated by $\dot{R}=R\omega^\times$ and $\dot{\theta}=\nu$ with $R(0)\in SO(3), \theta(0)\in \mathbb{R}, \omega\in \mathbb{R}^3, \nu\in \mathbb{R}$. Then, for all $(R,\theta)\in SO(3)\times \mathbb{R}$ the following statements hold:
		\begin{subequations}
			\begin{align}
				\dot{\mathcal{T}}(R,\theta)
				& = \mathcal{T}(R,\theta) (\mathcal{R}_a(\theta,u)\T \omega + \nu u)^\times\label{eqn:dTRtheta} \\
				\psi (R\T \nabla_{R} U(R,\theta)) &=  \mathcal{R}_a(\theta,u) \psi(A\mathcal{T}(R,\theta)) \label{eqn:gradient_R_U}  \\
				\nabla_{\theta} U(R,\theta) & = \gamma \theta + 2u \T  \psi (A \mathcal{T}(R,\theta)) \label{eqn:gradient_theta_U} \\
				\mathcal{A}_o & \in \Psi_{U}  := \Psi_{V} \times \{0\} \label{eqn:crit_U_R} \\ 
				\dot{\psi}(R\T \nabla_{R} U(R,\theta))  
				&   =  \mathcal{D}_{R} (R,\theta) \omega + \mathcal{D}_{\theta} (R,\theta) \nu \label{eqn:dot_psi}
			\end{align}
		\end{subequations}
		where  $	\mathcal{D}_{R} (R,\theta)  := \mathcal{R}_a(\theta,u)E(A\mathcal{T}(R,\theta))   \mathcal{R}_a\T(\theta,u) \in \mathbb{R}^{3\times 3} $ with $E(A) := \frac{1}{2}(\tr(A)I_3-A\T), \forall A\in \mathbb{R}^{3\times 3}$, and $\mathcal{D}_{\theta} (R,\theta) := \mathcal{R}_a(\theta,u)E(A\mathcal{T}(R,\theta)) u  - (\mathcal{R}_a(\theta,u) \psi(A\mathcal{T}(R,\theta)) ) ^\times  u  \in \mathbb{R}^3$.

	\end{lem}
	\begin{proof}
		See  Appendix \ref{sec:lemma_U}.
	\end{proof} 
	\begin{rem}
		Note that in \eqref{eqn:dTRtheta}, we obtain a different form of the time derivative of the transformation map  $\mathcal{T}$ on $SO(3)\times \mathbb{R}$  compared to \cite[Theorem 6]{mayhew2011synergistic} and \cite[Lemma 1]{berkane2017construction}. As mentioned before,  the transformation map $\mathcal{T}$ in \cite{mayhew2011synergistic} and \cite{berkane2017construction} needs to be a (local) diffeomorphism to obtain the new set of critical points of the potential functions after transformation. In our approach, the set of critical points of the potential function $U$ on $SO(3)\times \mathbb{R}$ with respect to $\mathcal{A}_o$, denoted by $\Psi_U$ in \eqref{eqn:crit_U_R}, can be easily obtained from \eqref{eqn:gradient_R_U} and \eqref{eqn:gradient_theta_U} without any additional conditions. Moreover, from \eqref{eqn:crit_U_R},  the set $\Psi_U$ is given by a simple extension of the set $\Psi_V$ 
		with $\theta\in \{0\}$. This property allows us to set the state $(R,\theta)$ away from the undesired critical points in $\Psi_U\setminus\{\mathcal{A}_o\}$ by resetting the variable $\theta$ to some non-zero values, which is the key of our reset mechanism proposed in Section \ref{sec:min-resetting}.
	\end{rem}

	We define the set of parameters $\mathcal{P}_U:=\{\Theta,A,u,\gamma,\delta\}$ with  a finite non-empty real-valued set $\Theta \subset \mathbb{R}$, a matrix $A=A\T \in \mathbb{R}^{3\times 3}$, a unit vector $u\in \mathbb{S}^2$, and constant scalars $\gamma,\delta>0$. The following proposition verifies all the conditions in Assumption \ref{assum:1}-\ref{assum:4} required by Section  \ref{sec:basic}-\ref{sec:velocity_free}.  
	\begin{prop}\label{prop:3}
		Consider the potential function $U$ defined in \eqref{eqn:URtheta}. Then, Assumptions \ref{assum:2}-\ref{assum:4} hold for any $\gamma>0,u\in \mathbb{S}^2$ and $A=A\T$ being positive definite.
		Moreover, the basic Assumption \ref{assum:1} holds given the set $\mathcal{P}_U$ defined as follows:
		\begin{align}
			\mathcal{P}_U:
			\left\{
			\begin{array}{l}
				\Theta = \{|\theta_i|  \in (0, \pi],i=1,\cdots,m\} \\
				A=A\T: 0 < \lambda_1^A \leq \lambda_2^A < \lambda_3^A \\
				u = \alpha_1 v_1^A+\alpha_2 v_2^A + \alpha_3 v_3^A \in \mathbb{S}^2  \\
				\gamma < \frac{4\Delta^*}{\pi^2} \\
				\delta <  ( \frac{4\Delta^* }{\pi^2} -\gamma)  \frac{\theta_M^2}{2},\ \theta_M  := \sup_{\theta'\in \Theta} |\theta'|
			\end{array}
			\right. \label{eqn:PU}
		\end{align}
		where  scalars $\alpha_1,\alpha_2, \alpha_3$ and $\Delta^*$ are given as per one of the following three cases:
		\begin{itemize}
			\item[1)]  if $ \lambda_1^A = \lambda_2^A$, 
			$\alpha_3^2 = 1-\frac{\lambda_2^A}{\lambda_3^A}$ and $ \Delta^* = \lambda_1^A(1-\frac{\lambda_2^A}{\lambda_3^A})$.	
			\item[2)]   if  $\lambda_2^A \geq \frac{\lambda_1^A \lambda_3^A}{\lambda^A_3 - \lambda_1^A}$,  
			$
			\alpha_i^2 = \frac{\lambda_i^A}{\lambda_2^A +\lambda_3^A},  i\in \{2,3\}$ and $\Delta^* = \lambda_1^A$.
			\item[3)]  if  $ \lambda_1^A <  \lambda_2^A < \frac{\lambda_1^A \lambda_3^A}{\lambda^A_3 - \lambda_1^A}$,  
			$
			\alpha_i^2 = 1-  \frac{4\prod_{j\neq i} \lambda_j^A}{(\sum_{l=1}^3\sum_{k\neq l}^3\lambda_l^A\lambda^A_k)}, i\in \{1,2,3\}$ and $
			\Delta^* =  \frac{4\prod_{j} \lambda_j^A}{\sum_{l=1}^3\sum_{k\neq l}^3\lambda_l^A\lambda^A_k}.
			$
		\end{itemize}
		with $(\lambda_i^A,v_i^A)$ denoting the $i$-th pair of eigenvalue-eigenvector of matrix $A$.
	\end{prop}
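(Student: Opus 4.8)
The plan is to build everything on Lemma~\ref{lemma:U}, which already supplies the gradients of $U$, the critical set $\Psi_U=\Psi_V\times\{0\}$ through \eqref{eqn:crit_U_R}, and the matrices $\mathcal{D}_R,\mathcal{D}_\theta$ in \eqref{eqn:D_R_e}--\eqref{eqn:D_theta}. I would proceed in three steps: (Step~1) establish the basic Assumption~\ref{assum:1} under the parameter set $\mathcal{P}_U$ --- this is the crux; (Step~2) deduce Assumptions~\ref{assum:3}, \ref{assum:4} and the upper bound \eqref{eqn:assump2_2} from uniform boundedness/compactness, with no restriction on $\gamma,u,A$; (Step~3) deduce the lower bound \eqref{eqn:assump2_3} on the flow set, which reuses the $\mathcal{P}_U$ setup of Step~1.

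\textbf{Step 1.} By \eqref{eqn:crit_U_R} and $\Psi_V=\{I_3\}\cup\mathcal{R}_a(\pi,\mathcal{E}(A))$, every undesired critical point has the form $(R^*,0)$ with $R^*=\mathcal{R}_a(\pi,v)=2vv\T-I_3$ for a unit eigenvector $v$ of $A$, say $Av=\lambda v$. Since $0\notin\Theta$ and $\mathcal{R}_a(0,u)=I_3$, \eqref{eqn:URtheta} gives $\mu_U(R^*,0)=U(R^*,0)-\min_{\theta'\in\Theta}U(R^*,\theta')=\max_{\theta'\in\Theta}\big[\tr\!\big(AR^*(\mathcal{R}_a(\theta',u)-I_3)\big)-\tfrac{\gamma}{2}(\theta')^2\big]$. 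Because $AR^*=2\lambda vv\T-A$ is symmetric, $\psi(AR^*)=0$ and $\tr(AR^*u^\times)=0$, whence $\tr\!\big(AR^*(\mathcal{R}_a(\theta',u)-I_3)\big)=(1-\cos\theta')\,h(v)$ with $h(v):=u\T AR^* u-\tr(AR^*)$. Writing $u=\sum_i\alpha_i v_i^A$ reduces $h(v_i^A)$ to an affine function of $(\alpha_1^2,\alpha_2^2,\alpha_3^2)$ on the probability simplex, and the three cases in \eqref{eqn:PU} are precisely the solution branches of the max--min problem $\max_{u\in\mathbb{S}^2}\min_i h(v_i^A)$, each producing $\min_i h(v_i^A)=\Delta^*>0$ (in case~1, where $\mathcal{E}(A)$ contains a circle, one verifies $h(v)\ge\Delta^*$ directly for every unit $v\perp v_3^A$ as well). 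Finally, the elementary bound $1-\cos\theta'\ge\tfrac{2}{\pi^2}(\theta')^2$ on $(0,\pi]$ yields, for every $\theta'\in\Theta$, $(1-\cos\theta')h(v)-\tfrac{\gamma}{2}(\theta')^2\ge\tfrac{(\theta')^2}{2}\big(\tfrac{4h(v)}{\pi^2}-\gamma\big)\ge\tfrac{\theta_m^2}{2}\big(\tfrac{4\Delta^*}{\pi^2}-\gamma\big)>\delta$, the last two inequalities using $\gamma<4\Delta^*/\pi^2$ and $\delta<\big(\tfrac{4\Delta^*}{\pi^2}-\gamma\big)\tfrac{\theta_m^2}{2}$. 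Hence $\mu_U(R^*,0)>\delta$, i.e.\ \eqref{eqn:u_U_delta} holds, and $\mathcal{A}_o\in\Psi_U$ by \eqref{eqn:crit_U_R}, so Assumption~\ref{assum:1} is verified.

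\textbf{Step 2.} From \eqref{eqn:gradient_R_U}, $\|\psi(R\T\nabla_R U)\|=\|\psi(A\mathcal{T}(R,\theta))\|$ since $\mathcal{R}_a(\theta,u)$ is orthogonal, and $Q\mapsto\|\psi(AQ)\|$ attains a maximum $c_\psi$ on the compact $SO(3)$; this gives Assumption~\ref{assum:4}. For Assumption~\ref{assum:3}, substituting $\omega=\omega_e$ and $v=\dot\theta=-k_\theta\nabla_\theta U$ into \eqref{eqn:dot_psi} gives $\dot{\psi}(R_e\T\nabla_{R_e}U)=\mathcal{D}_R(R_e,\theta)\omega_e-k_\theta\mathcal{D}_\theta(R_e,\theta)\nabla_\theta U$, and $\mathcal{D}_R,\mathcal{D}_\theta$ in \eqref{eqn:D_R_e}--\eqref{eqn:D_theta} are bounded uniformly in $(R_e,\theta)$ (they are continuous functions of the orthogonal $\mathcal{R}_a(\theta,u)$ and of $A\mathcal{T}\in A\cdot SO(3)$), which yields \eqref{eqn:dpsi}. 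For \eqref{eqn:assump2_2}, write $U=V_T+\tfrac{\gamma}{2}\theta^2$ with $V_T(R,\theta):=\tr(A(I_3-\mathcal{T}(R,\theta)))$; near $\mathcal{T}=I_3$ both $\|\psi(A\mathcal{T})\|^2$ and $V_T$ are positive-definite quadratic forms in the local coordinate (namely $x\T E(A)^2x$ and $x\T E(A)x$, with $E(A)>0$ since its eigenvalues equal $\tfrac12(\lambda_j^A+\lambda_k^A)$), so by compactness of $SO(3)$ there is $c>0$ with $\|\psi(A\mathcal{T})\|^2\le c\,V_T$ globally; combined with $|\nabla_\theta U|^2\le 2\gamma^2\theta^2+8\|\psi(A\mathcal{T})\|^2$ from \eqref{eqn:gradient_theta_U}, this gives \eqref{eqn:assump2_2}.

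\textbf{Step 3.} Once Assumption~\ref{assum:1} holds, \eqref{eqn:u_U_delta} and continuity of $\mu_U$ place a neighborhood of $\Psi_U\setminus\{\mathcal{A}_o\}$ outside $\mathcal{F}$; moreover $\mathcal{F}$ is compact, because $\min_{\theta'\in\Theta}U(R,\theta')$ stays bounded ($\Theta$ being finite) while $U(R,\theta)\ge\tfrac{\gamma}{2}\theta^2$, so $\mu_U(R,\theta)\to\infty$ as $|\theta|\to\infty$. Hence $\nabla_{(R,\theta)}U$ vanishes on $\mathcal{F}$ only at $\mathcal{A}_o$; near $\mathcal{A}_o$, $\nabla_{(R,\theta)}U$ and $U$ are quadratic forms in the local coordinates that vanish on exactly the same set, so their ratio is bounded below there, and continuity on the compact $\mathcal{F}\setminus B_\varepsilon(\mathcal{A}_o)$ supplies a positive lower bound elsewhere, giving $\alpha_2>0$ in \eqref{eqn:assump2_3}. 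I expect the main obstacle to be the finite-dimensional constrained optimization in Step~1 --- pinning down the optimal axis $u$ (equivalently the $\alpha_i$) and the value $\Delta^*$ of $\max_u\min_i h(v_i^A)$, which genuinely branches on the eigenvalue ratios of $A$; everything else reduces to compactness and Taylor estimates.
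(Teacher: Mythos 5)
Your proposal is correct and, for the crux of the result (Assumption~\ref{assum:1}), follows essentially the same route as the paper: both reduce $\mu_U$ at an undesired critical point $(\mathcal{R}_a(\pi,v),0)$ to $\max_{\theta'\in\Theta}\big[(1-\cos\theta')\Delta(v,u)-\tfrac{\gamma}{2}(\theta')^2\big]$ (your $h(v)$ is exactly the paper's $\Delta(v,u)$), apply the bound $1-\cos\theta'\geq 2(\theta')^2/\pi^2$ on $(0,\pi]$, and invoke $\min_{v\in\mathcal{E}(A)}\Delta(v,u)=\Delta^*>0$ for the three eigenvalue cases. Note that neither you nor the paper actually carries out this last max--min computation --- the paper defers it to \cite[Proposition 2]{berkane2017construction} in a remark --- so your acknowledgment that this is ``the main obstacle'' is apt, but it leaves your Step~1 at the same level of completeness as the paper's. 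Where you genuinely diverge is Assumption~\ref{assum:2}: the paper works with the explicit identity $\|\psi(A\mathcal{T})\|^2=\alpha_A(\mathcal{T})\tr(\underline{A}(I_3-\mathcal{T}))$ from \cite[Lemma 2]{berkane2017hybrid2} to produce closed-form constants $\alpha_1,\alpha_2$ (with $\alpha_2$ involving $\alpha_A^*=\min_{\mathcal{F}}\alpha_A$), whereas you use a soft argument: matching positive-definite quadratic forms $x\T E(A)^2x$ versus $x\T E(A)x$ near the identity plus compactness of $SO(3)$ for \eqref{eqn:assump2_2}, and compactness of $\mathcal{F}$ (which you correctly establish from $U\geq\tfrac{\gamma}{2}\theta^2$ and boundedness of $\min_{\theta'\in\Theta}U(R,\theta')$) together with the fact that the gradient vanishes on $\mathcal{F}$ only at $\mathcal{A}_o$ for \eqref{eqn:assump2_3}. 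Your version buys robustness --- in particular it sidesteps the paper's rather terse claim that $(R,\theta)\in\mathcal{F}$ forces $\mathcal{T}(R,\theta)\notin\mathcal{R}_a(\pi,\mathcal{E}(A))$, which does not follow immediately from $(R,\theta)\notin\Psi_U\setminus\{\mathcal{A}_o\}$ when $\theta\neq 0$ --- at the price of non-explicit constants, which is all Assumption~\ref{assum:2} requires. Assumptions~\ref{assum:3} and~\ref{assum:4} are handled identically in both proofs (uniform bounds on $\mathcal{D}_R$, $\mathcal{D}_\theta$ and on $\psi(A\mathcal{T})$ over the compact $SO(3)$).
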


	\begin{proof}
		See  Appendix \ref{sec:prop3}
	\end{proof}

	\begin{rem}
		As shown in the proof of Proposition \ref{prop:3}, Assumption 1 holds if there exists a unit vector $u\in \mathbb{S}^2$ such that one has $\Delta^* = \min_{v\in \mathcal{E}(A)}\Delta(v,u)>0$, where $\Delta(u,v) = u\T \left(\tr(A)I_3 - A - 2v\T A v(I_3 - vv\T) \right) u$. Proposition \ref{prop:3} provides a design option for the potential function $U$ through the choice of the set $\mathcal{P}_U$ given in \eqref{eqn:PU}. Inspired by \cite{berkane2017construction}, the unit vector $u$ is designed  in terms of the  eigenvalues and eigenvectors of the matrix $A$ with $0 < \lambda_1^A \leq \lambda_2^A < \lambda_3^A$. The choice of the unit vector $u$ in Proposition \ref{prop:3} is optimal in terms of $\sup_{u\in \mathbb{S}^2} (\min_{v\in \mathcal{E}(A)}\Delta(v,u))$    (see \cite[Proposition 2]{berkane2017construction} for more details). 
	\end{rem}
	
	\begin{rem}
		Note that a decrease in the value of $\gamma$ results in an increase of the   gap $\delta$ (strengthening the robustness to measurement noise). However, it may slow down the convergence of $\theta$ as per \eqref{eqn:ftheta}, leading to lower convergence rates for the overall closed-loop system. Hence, the parameter $\gamma$ needs to be carefully chosen via a trade-off between the robustness to measurement noise and the convergence rates of the overall closed-loop system. The performance of the closed-loop system, in terms of convergence rates, with different choices of $\gamma$ is illustrated in the simulation section. 
	\end{rem}
	
	\section{Extension to Global Pose Tracking on $SE(3)$} \label{sec:SE(3)}
		In this section, we extend our previous hybrid control strategy on $SO(3)$ to  the 3-dimensional \textit{Special Euclidean group} $SE(3)$, defined as
		\begin{align*}
			SE(3):= \left\{X = \begin{bmatrix}
				R & p\\
				0 & 1
			\end{bmatrix} \in \mathbb{R}^{4\times 4}: R\in SO(3), p\in \mathbb{R}^3\right\}
		\end{align*}
		with $R$ and $p$ denoting the rotation and position, respectively. The Lie algebra of $SE(3)$, denoted by $\mathfrak{se}(3)$,  is defined as  
		\begin{align*}
			\mathfrak{se}(3):= \left\{U = \begin{bmatrix}
				\omega^\times & v\\
				0 & 0
			\end{bmatrix} \in \mathbb{R}^{4\times 4}: \omega^\times \in \mathfrak{so}(3), v\in \mathbb{R}^3\right\}
		\end{align*}
		with $\omega$ and $v$ denoting the angular and linear velocities, respectively. The definitions of the maps $(\cdot)^\wedge$, $\bar{\psi}$, the adjoint action map $\Ad$ and the adjoint operator  $\ad$, and their properties are given in Appendix \ref{sec:property_SE(3)}.

		We consider the following fully actuated system on $SE(3)$
		\begin{align}
			\begin{cases}
				\dot{X} &= X\xi^\wedge\\
				\mathbb{I} \dot{\xi} &= \ad_\xi\T \mathbb{I} \xi + u_c 
			\end{cases} \label{eqn:dynamics_X}
		\end{align}
		where $X\in SE(3)$ denotes the pose of a rigid body system, $\xi = [\omega\T,v\T]\T \in \mathbb{R}^6$  denotes the group velocity, $\mathbb{I} = \diag(J,mI_3)\in \mathbb{R}^{6\times 6}$ denotes the inertia matrix, and $u_c = [\tau\T,f\T]\T \in \mathbb{R}^6$ with $f$ and $\tau$ denoting the force and torque inputs, respectively.  Similar to \eqref{eqn:dynamics_R_r},  the desired reference trajectory is generated by the following dynamical system: 
		\begin{align}
			\left. \begin{array}{rl}
				\dot{X}_r  &= X_r \xi_r^\wedge\\
				\dot{\xi}_r & = z \\
				z  &\in m\mathbb{B}
			\end{array}\right\} (X_r,\xi_r)\in \mathcal{W}_d \label{eqn:dynamics_X_r}
		\end{align}
		where  $m\mathbb{B}:=\{x\in \mathbb{R}^6: \|x\|\leq m\}, m>0$, $\mathcal{W}_d$ denotes a compact subset of $SE(3)\times \mathbb{R}^6$,  and $X_r$ and $\xi_r$ are the desired pose and group velocity, respectively. 
		
		Define the pose tracking error $X_e= X_r^{-1} X$  and the group velocity tracking error $\xi_e = \xi - \Ad_{X_e}^{-1} \xi_r$. From \eqref{eqn:dynamics_X}-\eqref{eqn:dynamics_X_r}, one obtains the following error dynamics:
		\begin{subequations}\label{eqn:Xe-PIxi_e}
			\begin{align}
				\dot{X}_e & = X_e \xi_e^\wedge  \label{eqn:Xe}\\
				\mathbb{I}\dot{\xi}_e  &=  \Sigma(X_e,\xi_e,\xi_r)\xi_e  - \Upsilon(X_e, \xi_r,z) + u_c \label{eqn:PIxi_e} 
			\end{align}
		\end{subequations}
		where the functions $\Upsilon: SE(3)\times \mathbb{R}^6 \times \mathbb{R}^6 \to \mathbb{R}^6$ and  $\Sigma: SE(3)\times \mathbb{R}^6 \times \mathbb{R}^6 \to \mathbb{R}^{6\times 6}$ are given by
		\begin{subequations}
			\begin{align}
				\Sigma(X_e,\xi_e,\xi_r)&: =  \ad_{\xi_e }\T \mathbb{I}   - \ad_{\mathbb{I}\Ad_{X_e}^{-1}\xi_r}\T  \nonumber \\
				& ~~~~~~~  +  (\ad_{\Ad_{X_e}^{-1}\xi_r}\T \mathbb{I}   - \mathbb{I} \ad_{\Ad_{X_e}^{-1}\xi_r}  ) \\
				\Upsilon(X_e, \xi_r,z)&: = -\ad_{\Ad_{X_e}^{-1}\xi_r}\T \mathbb{I} \Ad_{X_e}^{-1}\xi_r+ \mathbb{I}\Ad_{X_e}^{-1}z. \label{eqn:UpsilonSE}
			\end{align}
		\end{subequations}
		Note that the error dynamics in \eqref{eqn:Xe-PIxi_e} have similar structure as in \eqref{eqn:Re-Jomega_e}, and the equality $\xi_e\T \Sigma(X_e,\xi_e,\xi_r)\xi_e = 0$ holds.  Let $V(X_e)$ be a potential function on $SE(3)$ with respect to $I_4$. Hence,  given the following smooth gradient-based feedback
		\begin{equation}
			u_c = \Upsilon(X_e, \xi_r,z) - 2 k_X\bar{\psi}(X_e^{-1} \nabla_{X_e}V(X_e)) - k_\xi \xi_e \label{eqn:smooth_u_c},
		\end{equation}
		the equilibrium point $(I_4,0)$ of the closed-loop system \eqref{eqn:Xe-PIxi_e}-\eqref{eqn:smooth_u_c} can be shown to be AGAS.

		Now, we will illustrate the difficulty of the application of the synergistic hybrid approach on $SE(3)$. Applying the  ``angular-warping" technique from \cite{mayhew2011synergistic}
		directly on  $SE(3)$, one has the transformation map $\mathcal{T}: SE(3) \to SE(3)$ as $\mathcal{T}(X_e)  = \exp(\vartheta(X_e) u^\wedge) X_e$ with $\vartheta(X_e) = k V(X_e)$ and $ k\in \mathbb{R},u\in \mathbb{R}^6$. Repeating the results in \cite[Theorem 6]{mayhew2011synergistic}, one obtains $\dot{\mathcal{T}}(X_e) = \mathcal{T}(X_e) (\bar{\Theta}(X_e)  \xi_e) ^\wedge$ with  $\bar{\Theta}(X_e)  = I_6 + 2k\Ad_{X_e}^{-1}   u    \bar{\psi}(X_e^{-1}\nabla_{X_e} V(X_e))\T$. 
		To guarantee that $\mathcal{T}$ is a diffeomorphism as in \cite[Theorem 8]{mayhew2011synergistic}, one way is to show that matrix $\bar{\Theta}(X_e)$ is invertible on $SE(3)$, \ie, $\det(\bar{\Theta}(X_e)) = 1 + 2|k|\|u\|\|\bar{\psi}(X_e^{-1}\nabla_{X_e} V(X_e))\| \neq 0$ for all $X_e\in SE(3)$. However, the choice of the scalar $k$ is difficult due to the fact that $SE(3)$ is non-compact and the upper bound of $\|\nabla_{X_e} V(X_e)\|_F$ cannot be \textit{a priori} determined.	
		To avoid this issue, an alternative design for hybrid feedback on $SE(3)$ with GAS guarantees has been proposed in \cite{casau2015globally}, which combines a hybrid attitude feedback relying on a synergistic family of potential functions on $SO(3)$ and a smooth linear feedback for the vector states. The key of this approach is that it separates the non-compact Lie group $SE(3)$ into a compact Lie group $SO(3)$ and a linear space $\mathbb{R}^3$ and, as such, the control is designed on $SO(3)\times \mathbb{R}^3$ rather than on $SE(3)$ directly. Our approach, however, is not restricted to compact manifolds and can handle the design of geometric hybrid control schemes directly on $SE(3)$.

		Let $U$ be a potential function on $SE(3)\times \mathbb{R}$ with respect to $\mathcal{A}_o':=(I_4,0)$ and $\Theta \subset \mathbb{R}$ be a nonempty finite set. We propose the following hybrid feedback tracking scheme: 
		\begin{align}
			\underbrace{
				\begin{array}{l}
					u_c = \Upsilon(X_e, \xi_r,z)    - k_\xi \xi_e\\
					\qquad   - 2 k_X\bar{\psi}(X_e^{-1} \nabla_{X_e}U(X_e,\theta)) \\
					\dot{\theta} ~= f(X_e,\theta)
				\end{array}
			}_{(X_e,\theta)\in \mathcal{F}}
			\underbrace{
				\begin{array}{l}	
					\\
					\\
					\\[-0.4cm] 	
					\theta^+  \in g(X_e, \theta)
				\end{array}
			}_{(X_e,\theta)\in \mathcal{J}}  \label{eqn:hybrid_feedback4}
		\end{align}
		where $k_X, k_\xi>0, \theta(0,0)\in \mathbb{R}$, $\Upsilon$ is defined in \eqref{eqn:UpsilonSE},  
		the flow map $f:SE(3)\times \mathbb{R} \to \mathbb{R}$ and the jump map $g:SE(3)\times \mathbb{R} \to \mathbb{R}$ are defined as
		\begin{align}
			f(X_e,\theta)&:= -  k_\theta\nabla_{\theta} U(X_e,\theta) \label{eqn:ftheta4} \\
			g(X_e,\theta) &:= \left\{\theta \in \Theta :  \theta= \arg \min\nolimits_{{\theta}'\in \Theta}U(X_e,{\theta}') \right\}  \label{eqn:gtheta4}
		\end{align}  	
		with $k_\theta>0$, and the flow and jump sets are given as
		\begin{subequations}
			\begin{align}
				\mathcal{F}&:= \left\{(X_e,\theta)\in SE(3) \times \mathbb{R}:  \mu_U(X_e,\theta)\leq \delta \right\} \label{eqn:Fset4} \\
				\mathcal{J}&:= \left\{(X_e,\theta) \in SE(3)\times \mathbb{R}:  \mu_U(X_e,\theta)\geq \delta \right\} \label{eqn:Jset4}
			\end{align} 
		\end{subequations}
		with some $\delta>0$ and the map $\mu_U: SE(3)\times \mathbb{R} \to \mathbb{R}$ given as
		\begin{equation}
			\mu_U(X_e,\theta)  := U(X_e,\theta) - \min\nolimits_{{\theta}'\in \Theta} U(X_e,\theta') \label{eqn:mu_USE}.
		\end{equation}
		The proposed hybrid feedback \eqref{eqn:hybrid_feedback4} is modified from \eqref{eqn:hybrid_feedback1} and designed on $SE(3)$, in terms of the geometric tracking errors $X_e,\xi_e$ and a general potential function $U$ on $SE(3)\times \mathbb{R}$. 
		Now, one can state the following result:
		\begin{thm}\label{them:4}
			Let $k_X, k_\xi,k_\theta>0$ and suppose that  there exist a potential function $U$ on $SE(3)\times \mathbb{R}$ with respect to $\mathcal{A}_o'$  and a nonempty finite set  $\Theta \subset \mathbb{R}$ such that  $\mathcal{A}_o' \in \Psi_U$ with $\Psi_U$ denoting the set of all critical points of $U$ and
			\begin{equation}
				\mu_U(X,\theta)  > \delta,  \quad  \forall (X,\theta)\in \Psi_U\setminus\{\mathcal{A}_o'\}  \label{eqn:u_U_delta4}
			\end{equation} 
			with some constant $\delta >0$ and $\mu_U$ defined in \eqref{eqn:mu_USE}. 
			Then,  the set $\mathcal{A}':= \{x \in SE(3) \times \mathbb{R} \times \mathbb{R}^6 \times \mathcal{W}_d  : (X_e,\theta)= \mathcal{A}_o', \xi_e=0 \} $ is globally asymptotically stable for the closed-loop   system  \eqref{eqn:Xe-PIxi_e} with the hybrid feedback \eqref{eqn:hybrid_feedback4}, and the number of jumps is finite.  
		\end{thm}
		The proof of Theorem \ref{them:4} can be conducted by following the same steps as in the proof of Theorem \ref{them:1}, and therefore is omitted here. It is important to point out that the key condition of Theorem \ref{them:4} is that the basic Assumption \ref{assum:1} holds for $SE(3)$ (\ie, inequality \eqref{eqn:u_U_delta4}). To complete the hybrid feedback design on $SE(3)$, we need to construct a potential function $U$ on $SE(3)\times \mathbb{R}$ such that inequality \eqref{eqn:u_U_delta4} holds.

		Consider the following transformation map  $\mathcal{T}: SE(3)\times \mathbb{R} \to SE(3)$ :
		\begin{equation}
			\mathcal{T}(X,\theta) :=  X \exp(\theta \bar{u}^\wedge)   \label{eqn:transfT2}
		\end{equation}
		where $X\in SE(3)$, $\bar{u}\in \mathbb{R}^6$ is a constant vector and  $\theta \in \mathbb{R}$ is a real-valued variable with hybrid dynamics specified in \eqref{eqn:hybrid_feedback4}-\eqref{eqn:gtheta4}. For the sake of simplicity, let 
		$\mathcal{T}_{X,\theta}:=\mathcal{T}(X,\theta)$. 
		Let us introduce the following potential function on $SE(3)\times \mathbb{R}$ with respect to $\mathcal{A}_o'$ 
		\begin{align}
			U(X,\theta) 
			&= \frac{1}{2}\tr((I_4-\mathcal{T}_{X,\theta})\mathbb{A}(I_4-\mathcal{T}_{X,\theta})\T) + \frac{\gamma }{2  } \theta^2    \label{eqn:UXtheta}	
		\end{align}
		with a symmetric positive definite matrix $\mathbb{A}\in \mathbb{R}^{4\times 4}$ and a constant $\gamma >0$ to be designed. Let $\Psi_V $ denote the set of critical points of potential function $V(X) = \frac{1}{2}\tr((I_4-X)\mathbb{A}(I_4-X)\T)$ on $SE(3)$, which can be computed as per \cite[Lemma 5]{wang2019hybrid}. 
		The following proposition provides some useful properties of the potential function $U$ on $SE(3)\times \mathbb{R}$:	
		\begin{prop}
			Let   $\mathbb{A} = \mathbb{A}\T$ be a positive definite matrix.   Consider  the potential function $U$ defined in \eqref{eqn:UXtheta}, and the trajectories generated by $\dot{X}=X\xi^\wedge$ and $\dot{\theta}=\nu$ with $X(0)\in SE(3), \theta(0)\in \mathbb{R}, \xi \in \mathbb{R}^6, \nu\in \mathbb{R}$. Then, for all $(X,\theta)\in SE(3)\times \mathbb{R}$, the following statements hold:
			\begin{subequations}
				\begin{align}   
					\dot{\mathcal{T}}_{X,\theta}
					& = \mathcal{T}_{X,\theta}  (\Ad_{\exp(\theta \bar{u}^\wedge)}^{-1} \xi + \nu \bar{u})^\wedge  \label{eqn:dTXtheta} \\
					\bar{\psi}(X^{-1} \nabla_{X} U(X,\theta)) &=   \Ad_{\exp(\theta \bar{u}^\wedge)}^{-\top} \bar{\psi}( (I_4-\mathcal{T}_{X,\theta}^{-1})\mathbb{A})  \label{eqn:gradient_X_U}  \\
					\nabla_{\theta} U(X,\theta) & = \gamma \theta + 2\bar{u}\T \bar{\psi}( (I_4-\mathcal{T}_{X,\theta}^{-1})\mathbb{A}) \label{eqn:gradient_theta_UX} \\
					\mathcal{A}_o' & \in \Psi_{U}  := \Psi_{V} \times \{0\} 
				\end{align}
			\end{subequations}
			Moreover, choosing $\bar{u} = [u\T,0]\T$, $\mathbb{A} = \diag(A,1)$ and the set of parameters $\mathcal{P}_U=\{\Theta,A,u,\gamma,\delta\}$ defined in \eqref{eqn:PU} one has
			$$
			U(X,\theta) = \tr((I_3-R\mathcal{R}_{a}(\theta,u))A) + \frac{1}{2}\|p\|^2 + \frac{\gamma }{2  } \theta^2 
			$$
			and the condition \eqref{eqn:u_U_delta4} in Theorem \ref{them:4} holds. 
		\end{prop}
		The proof of this Proposition can be conducted using similar steps as in the proof of Lemma \ref{lemma:U} and Proposition \ref{prop:1}, and hence omitted.

	\section{Simulation} \label{sec:simulation} 
	In this section, numerical simulations are presented to illustrate the performance of the proposed hybrid feedback controllers. We make use of the HyEQ Toolbox in Matlab \cite{sanfelice2013toolbox}. The hybrid controller \eqref{eqn:hybrid_feedback1} is referred to as `Basic Hybrid', the hybrid controller with torque smoothing mechanism in \eqref{eqn:hybrid_feedback2} is referred to as  `Smooth Hybrid', and the velocity-free hybrid controller in \eqref{eqn:hybrid_feedback3} is referred to as `Velocity-Free Hybrid'.  For comparison purposes, we also consider the following classical smooth non-hybrid controller, referred to as `Non-Hybrid':
	\begin{align}
		\tau = \Upsilon(R_e, \omega_r,z)   -2 k_R\psi(A R_e) - k_\omega  \omega_e \label{eqn:smooth_feedback}
	\end{align}
	which is modified from the hybrid controller \eqref{eqn:hybrid_feedback1} by taking $\theta \equiv 0$.  The inertia matrix of the  system is taken as  $J = \diag(0.0159,0.0150,0.0297)$  obtained from a quadrotor UAV in \cite{wang2015attitude}. The reference rotation and angular velocity are generated by \eqref{eqn:dynamics_R_r} with $R_r(0)=I_3$, $\omega_r(0)=0$ and $z(t)=[\sin(0.1t),-\cos(0.3t),0.1]\T$. For the set $\mathcal{P}_U$ in Proposition \ref{prop:3}, we choose $\Theta = \{\theta_M\}$ with $\theta_M= 0.9\pi$, $A = \diag([2, 4, 6])$, $\gamma <   \frac{4\Delta^*}{\pi^2} =   \frac{8}{\pi^2} $, $\delta < ( \frac{4\Delta^*}{\pi^2} -\gamma) \frac{{\theta_M}^2}{2}$,  $u = [0,\sqrt{{2}/{5}}, \sqrt{ {3}/{5}}]\T$ and $\Delta^* = \lambda_1^A=2$ as per case 2) in Proposition \ref{prop:3} (\ie, $\lambda_2^A >  {\lambda_1^A \lambda_3^A}/(\lambda^A_3 - \lambda_1^A)$). 

	In our first simulation, three different choices of $\gamma$ such as $\frac{3}{\pi^2},\frac{5}{\pi^2},\frac{7}{\pi^2}$ are considered in the hybrid controller \eqref{eqn:hybrid_feedback1}. For each $\gamma$, the constant gap $\delta$ is chosen as $\delta =  \frac{4}{10}( \frac{4\Delta^*}{\pi^2} -\gamma) \theta_M^2$. Moreover, the gain parameters are chosen as $k_R = 1.5, k_\omega = 0.2, k_\theta = 50$, and the initial conditions are  chosen as $\omega(0) = 0$, $R(0) = \mathcal{R}_a(\pi-\epsilon,u)$ with $\epsilon=10^{-9},u=[0,0,1]\T$ and $\theta(0,0)=0$, which ensures that initial $(R_e,\theta)$ is close to one of the undesired critical points of $U$. Same gains and initial conditions are considered in the non-hybrid controller \eqref{eqn:smooth_feedback}. The simulation results are given in Fig. \ref{fig:simulation1}.  As one can see, for the basic hybrid feedback, the variable $\theta$ in \eqref{eqn:dynamics_theta}  jumps from 0 to $0.9\pi$ at $t=0$ and then converges to zero as $t\to \infty$. Moreover, the tracking errors $(R_e,\omega_e)$ of both controllers converge to zero as $t\to \infty$. One can also see that the hybrid controller \eqref{eqn:hybrid_feedback1} improves the convergence rate as compared to the  non-hybrid controller \eqref{eqn:smooth_feedback}, and an increase in the value of $\gamma$ leads to an increase in the convergence rate of the tracking errors.
	
	\begin{figure}[!ht] 
		\centering 
		\includegraphics[width=0.9\linewidth]{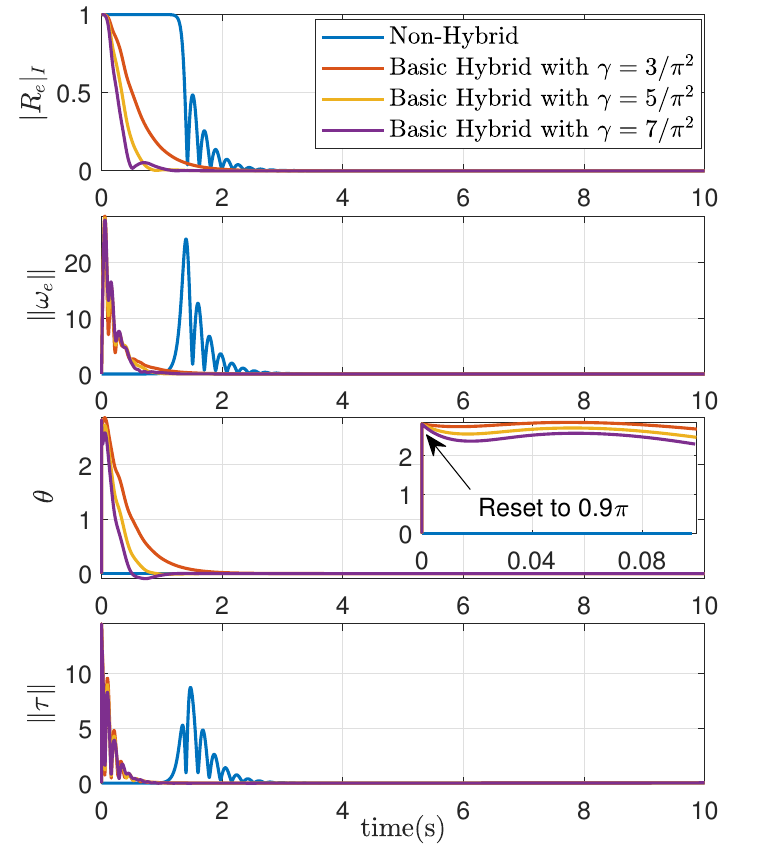}   
		\caption{Simulation results for the  hybrid feedback \eqref{eqn:hybrid_feedback1} with   different choices of parameter $\gamma$.
		}
		\label{fig:simulation1}
	\end{figure}

	Our second simulation presents a comparison between the three proposed hybrid controllers in the presence of measurements noise. The noisy measurements of attitude and angular velocity are given as $R_y = R\exp(n_R^\times)$ with zero-mean Gaussian noise $n_R \sim \mathcal{N}(0,0.01 I_3)$, and $\omega_y = \omega + n_\omega$ with zero-mean Gaussian noise $n_\omega \sim \mathcal{N}(0,0.01 I_3)$. Same initial conditions for $R,\theta$ and $\omega$ are chosen as in the previous simulation, and in addition $\bar{R}(0) =  R(0)\T$, $ \bar{\theta}(0,0)=0$ and $\zeta(0)=0$ are considered. We choose $\gamma = \frac{7}{\pi^2}$ and $ \delta =   \frac{4}{10}( \frac{4\Delta^*}{\pi^2} -\gamma) \theta_M^2 = 0.324$ for  $U$,   and $\delta'=0.162$ and $\varrho = 0.0146$ for  $W$ defined in \eqref{eqn:defW}.  Moreover,  the gain parameters  are chosen as follows:
	\begin{tabbing} 
		\centering  
		\begin{tabular}{|c|c|c|c|c|c|c|}
			\hline
			& $k_R$ & $k_\omega$ & $k_\theta$ & $k_\zeta$ & $k_\beta$ & $\Gamma$ \\
			\hline
			Basic Hybrid & 1.5 & 0.2 & 50 & $-$ & $-$ &$-$ \\
			\hline
			Smooth Hybrid & 1.5 & 0.2 & 50 & 150 & $-$ & $-$ \\
			\hline
			Velocity-Free Hybrid & 1.5 & $-$  & 50 & $-$ & 3 & 30$I_3$  \\
			\hline
		\end{tabular}
	\end{tabbing}
	Note that 
	$k_\beta$ and $\Gamma$ are chosen such that $2k_\beta \Gamma^{-1} = k_\omega$. 
	The simulation results are given in Fig. \ref{fig:simulation2}. For all controllers, the tracking errors $R_e,\omega_e$ and $\theta$ converge to zero, after one second. 
	Through an appropriate gain tuning, the three hybrid tracking controllers exhibit a quite similar performance. Note that the velocity-free hybrid controller is more sensitive to noise as shown in the plot of the control torque, which is mainly due to the large gain $\Gamma$ involved in the an auxiliary system \eqref{eqn:bar_R} to overcome the lack of angular velocity measurements.

	\begin{figure}[!ht] 
		\centering 
		\includegraphics[width=0.9\linewidth]{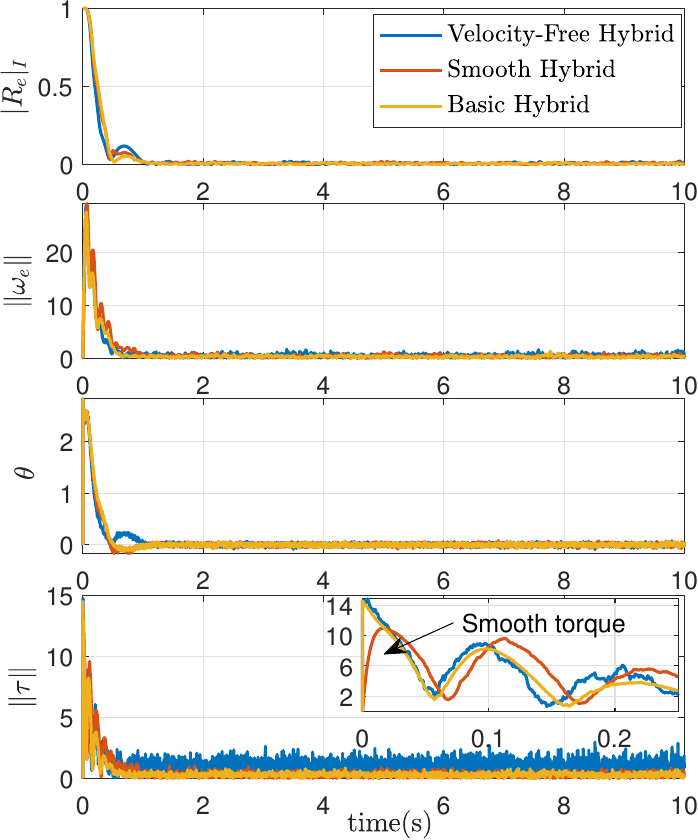}  
		\caption{Simulation results for different  hybrid feedback with noisy measurements.
		}
		\label{fig:simulation2}
	\end{figure}	
	
	\section{Conclusion} \label{sec:conclusion}
	Three different hybrid feedback control schemes for the attitude tracking problem on $SO(3)$, leading to global asymptotic stability, have been proposed. As an instrumental tool in our design, a new potential function on $SO(3)\times \mathbb{R}$, involving a potential function on $SO(3)$ and a scalar variable $\theta$, has been proposed. The scalar variable  $\theta$ is governed by hybrid dynamics designed to prevent the extended state in $SO(3)\times \mathbb{R}$ from reaching the undesired critical points, while guaranteeing a decrease of the potential function after each jump. In fact, embedding the manifold $SO(3)$  in the higher dimensional space $SO(3)\times \mathbb{R}$ allows to modify the critical points on $SO(3)$ by tying them to $\theta=0$. This embedding mechanism  provides an easier handling of the critical points on the extended manifold $SO(3)\times \mathbb{R}$ through the hybrid dynamics of the scalar variable $\theta$.\\
	A global hybrid attitude tracking controller is designed from the gradient of the potential function with the full knowledge of the system state. For practical purposes, two extensions have been proposed: A hybrid attitude tracking controller with jump-free control torque  and a velocity-free hybrid attitude tracking controller.  The proposed hybrid strategy, involving a single potential function on $SO(3)\times \mathbb{R}$, on top of being simpler than the existing hybrid approaches involving a synergistic family of potential functions, shows a great potential for other applications involving non-compact manifolds where the synergistic hybrid approaches may not be applicable.  This fact has been demonstrated through the design of a globally asymptotically stabilizing (geometric) hybrid feedback for the tracking control problem on the non-compact manifold $SE(3)$.

	\appendix

	\subsection{Proof of Theorem \ref{them:1}} \label{sec:them1}
	Consider the following Lyapunov function candidate:
	\begin{equation}
		\mathcal{L}(x) =  k_R U(R_e,\theta) + \frac{1}{2} \omega_e\T J \omega_e. \label{eqn:Lx}
	\end{equation}
	Since $U$ is a potential function on $SO(3)\times \mathbb{R}$ with respect to $\mathcal{A}_o$, and $J=J\T$ is positive definite,  one can verify that $\mathcal{L}$ is positive definite on $\mathcal{S}$ with respect to $\mathcal{A}$. The time derivative of $\mathcal{L}$ along the flows of \eqref{eqn:closed-loop1} is given by
	\begin{align}
		\dot{\mathcal{L}}(x)
		&=   k_R\dot{U}(R_e,\theta)- \omega_e\T  \kappa(R_e,\theta,\omega_e) \label{eqn:dotL}
	\end{align}
	where we used $\omega_e\T  \Sigma(R_e,\omega_e,\omega_r)\omega_e =0$. From \eqref{eqn:Re} and \eqref{eqn:dynamics_theta}, one obtains 
	\begin{align}
		\dot{U}(R_e,\theta)
		& =  \langle \nabla_{R_e} U(R_e,\theta), R_e \omega_e^\times \rangle_{R_e}  + \langle\langle \nabla_{\theta} U(R_e,\theta), \dot{\theta}\rangle\rangle \nonumber \\
		& = \langle\langle R_e\T \nabla_{R_e} U(R_e,\theta), \omega_e^\times \rangle\rangle  +    \nabla_{\theta} U(R_e,\theta) \dot{\theta}   \nonumber \\
		& = 2 \omega_e\T \psi (R_e\T \nabla_{R_e} U(R_e,\theta))   -   k_\theta |\nabla_{\theta} U(R_e,\theta)|^2  \label{eqn:dotU_Re}
	\end{align}
	where we made use of the property $\langle\langle A, x^\times\rangle\rangle = 2x\T \psi (A) $. 
	Substituting  \eqref{eqn:kappa} and \eqref{eqn:dotU_Re} into \eqref{eqn:dotL}, one can further show that
	\begin{align}
		\dot{\mathcal{L}}(x)
		& = 2 k_R \omega_e\T \psi (R_e\T \nabla_{R_e} U(R_e,\theta)) - k_R k_\theta |\nabla_{\theta} U(R_e,\theta)|^2 \nonumber\\
		& ~~~ - \omega_e\T (2 k_R \psi(R_e\T \nabla_{R_e} U(R_e,\theta)) + k_\omega  \omega_e) \nonumber\\
		&= -k_\omega \|\omega_e\|^2   -  k_R k_\theta |\nabla_{\theta} U(R_e,\theta) |^2  \leq 0 \label{eqn:dotLx}
	\end{align}
	for all $x\in \mathcal{F}_c$. 
	Thus, $\mathcal{L}$ is non-increasing along the flows of \eqref{eqn:closed-loop1}. Moreover, in view of   \eqref{eqn:closed-loop1} and \eqref{eqn:Lx}, for any $x \in \mathcal{J}_c$, one has $x^+ \in (R_e, g(R_e,\theta),\omega_e,R_r,\omega_r) $ and
	\begin{align}
		\mathcal{L}(x^+) - \mathcal{L}(x) 
		&= -k_R \left( U(R_e,\theta) -   \min_{{\theta}'\in \Theta} U(R_e,{\theta}') \right)  \nonumber \\
		& = -k_R \mu_U(R_e,\theta)  < -k_R\delta \label{eqn:Lx+}
	\end{align}
	where we made use of the fact  $\mu_U(R_e,\theta) > \delta$ for all $x \in  \mathcal{J}_c$.
	Thus, $\mathcal{L}$ is strictly decreasing over the jumps of \eqref{eqn:closed-loop1}. From \eqref{eqn:dotLx} and \eqref{eqn:Lx+}, one concludes that the set ${\mathcal{A}}$ is stable as per \cite[Theorem 23]{goebel2009hybrid}, and every maximal solution to \eqref{eqn:closed-loop1} is bounded. Moreover, in view of \eqref{eqn:dotLx} and \eqref{eqn:Lx+}, one obtains
	$
	\mathcal{L}(x(t,j))   \leq \mathcal{L}(x(t_j,j)) $ and $
	\mathcal{L}(x(t_j,j))  \leq \mathcal{L}(x(t_j,j-1))-k_R\delta
	$
	for all $(t,j),(t_j,j),(t_j,j-1)\in \dom x $ with $ (t,j) \succeq (t_j,j) \succeq (t_j,j-1)$. Hence, it is clear that
	$ 0\leq \mathcal{L}(x(t,j))   \leq \mathcal{L}(x(0,0)) - j k_R \delta $ for all $ (t,j)\in \dom x $, 
	which leads to
	$
	j\leq  \left\lceil {\mathcal{L}(x(0,0))}/{(k_R \delta)}\right\rceil:=J_M,
	$
	where $\lceil\cdot\rceil$ denotes the ceiling function. This shows that the number of jumps is finite and depends on the initial conditions.

	Next, we will show the global attractivity of $\mathcal{A}$.
	Applying the invariance principle for hybrid systems given in \cite[Theorem 4.7]{sanfelice2007invariance}, one concludes from  \eqref{eqn:dotLx} and \eqref{eqn:Lx+}  that any solution $x$ to the hybrid system \eqref{eqn:closed-loop1} must converge to the largest invariant set contained in
	$
	\mathcal{W}:
	=\left\{x\in \mathcal{F}_c ~|~ \nabla_{\theta} U(R_e,\theta)=0,  \omega_e=0 \right\}.
	$
	For each $x\in \mathcal{W}$, from $\omega_e\equiv 0$ one has $\dot{\omega}_e = 0$. It follows from  \eqref{eqn:Jomega_e}, \eqref{eqn:hybrid_feedback1} and \eqref{eqn:kappa}  that $\psi(R_e\T \nabla_{R_e} U(R_e,\theta)) = 0$. Using this fact, together with $ \nabla_{\theta} U(R_e,\theta) \equiv 0$, one can show that $(R_e,\theta)\in \Psi_U$ with $\Psi_U$ defined in \eqref{eqn:criticalU}. Thus, any solution $x$ to the hybrid system \eqref{eqn:closed-loop1} must converge to the largest invariant set contained in $ {\mathcal{W}}'
	:=\left\{x\in \mathcal{F}_c ~|~  (R_e,\theta)\in \mathcal{F} \cap \Psi_U, \omega_e=0\right\}.$ 
	By Assumption \ref{assum:1},  one has $\mathcal{A}_o \in \Psi_U$ and $\mu_U(R_e,\theta) = - \min_{{\theta}'\in \Theta} U(R_e,{\theta}')\leq 0$ as $(R_e, {\theta}) = \mathcal{A}_o$. It follows from \eqref{eqn:u_U_delta} and \eqref{eqn:Fset}-\eqref{eqn:Jset} that $\mathcal{A}_o \in \mathcal{F} \cap \Psi_U$ and $\mathcal{F} \cap (\Psi_U\setminus\{\mathcal{A}_o\}) =\emptyset$.
	Then, applying simple set-theoretic arguments, one obtains
	$
	\mathcal{F} \cap \Psi_U  \subset ( \mathcal{F} \cap (\Psi_U\setminus\{\mathcal{A}_o\}) ) \cup (\mathcal{F} \cap \{\mathcal{A}_o\}   )  =   \emptyset \cup \{\mathcal{A}_o\} = \{\mathcal{A}_o\}.
	$
	It follows from $\mathcal{A}_o \in \mathcal{F} \cap \Psi_U$ and $\mathcal{F} \cap \Psi_U  \subset  \{\mathcal{A}_o\}$ that $\mathcal{F} \cap \Psi_U = \{\mathcal{A}_o \}$. Consequently, from the definitions of ${\mathcal{W}}'$ and $\mathcal{A}$, it follows that ${\mathcal{W}}' = \mathcal{A}$. 
	
	Note that the closed-loop system \eqref{eqn:closed-loop1} satisfies the hybrid basic conditions \cite[Assumption 6.5]{goebel2012hybrid}, $F(x) \subset T_{\mathcal{F}}(x)$ for any $x\in \mathcal{F}_c \setminus \mathcal{J}_c$ with $T_{\mathcal{F}}(x)$ denoting the tangent cone to $\mathcal{F}_c$ at the point $x$, $G(\mathcal{J}_c) \subset \mathcal{F}_c \cup \mathcal{J}_c = \mathcal{S}$, and every maximal solution to \eqref{eqn:closed-loop1} is bounded.  Therefore, by virtue of \cite[Proposition 6.10]{goebel2012hybrid}, it follows that every maximal solution to \eqref{eqn:closed-loop1} is complete. Finally, one can conclude that the set $\mathcal{A}$ is globally asymptotically stable for the hybrid system \eqref{eqn:closed-loop1}.  This completes the proof.
	
	\subsection{Proof of Proposition \ref{prop:1}} \label{sec:prop1}
	Consider  the following Lyapunov function candidate:
	\begin{align}
		\mathcal{L}_{\varepsilon} (x)  =  \mathcal{L}(x) + \varepsilon \omega_e\T J \psi(R_e\T \nabla_{R_e} U(R_e,\theta))\label{eqn:L_varepsilonx} 
	\end{align}	
	where $\varepsilon>0$ and $\mathcal{L}(x)$ is given in \eqref{eqn:Lx}. From \eqref{eqn:assump2_2}, one has $\|\psi(R_e\T \nabla_{R_e} U(R_e,\theta))\|^2\leq   \alpha_1 U(R_e,\theta)$, and consequently, one can show that 
	\begin{align}
		\lambda_{\min}^{P_1} |x|_{\mathcal{A}}^2 \leq \mathcal{L}_{\varepsilon}(x) \leq \lambda_{\max}^{P_2} |x|_{\mathcal{A}}^2, \quad  \forall x\in \mathcal{S} \label{eqn:L_varepsilonx_bound} 
	\end{align}
	where matrices $P_1$ and $P_2$ are given as
	\begin{align*}    \setlength\arraycolsep{0.0pt}
		P_1 =\begin{bmatrix}   
			k_R   & \frac{-\varepsilon \sqrt{\alpha_1} \lambda_{\max}^J }{2}  \\
			\frac{-\varepsilon \sqrt{\alpha_1} \lambda_{\max}^J }{2}& \frac{\lambda_{\min}^J}{2}  
		\end{bmatrix},  
		P_2 =\begin{bmatrix}   
			k_R  &  \frac{\varepsilon \sqrt{\alpha_1} \lambda_{\max}^J }{2}  \\
			\frac{\varepsilon \sqrt{\alpha_1} \lambda_{\max}^J }{2}  & \frac{\lambda_{\max}^J}{2} 
		\end{bmatrix}.
	\end{align*} 
	To guarantee that $P_1$ and $P_2$ are positive definite, it is sufficient to choose $\varepsilon< ({1}/{\lambda_{\max}^J}) \sqrt{2k_R \lambda_{\min}^J /\alpha_1}:=\varepsilon^*_1$. 
	
	Since the set $ \varOmega_c  \times \mathcal{W}_d$ is compact by assumption, there exists a constant scalar $\mathcal{L}^*\geq 0$ such that $\mathcal{L}^*:=\sup_{x\in \varOmega_c  \times \mathcal{W}_d} \mathcal{L}(x)$. We define the following compact set
	$
		\varOmega_{\mathcal{L}}: = \{x\in \mathcal{S}: \mathcal{L}(x) \leq \mathcal{L}^*\}.
	$ 
	It is clear that $x(0,0)\in  \varOmega_c  \times \mathcal{W}_d \subseteq \varOmega_{\mathcal{L}}$ and $\mathcal{A} \subseteq \varOmega_{\mathcal{L}}$. As shown in the proof of Theorem \ref{them:1}, $\mathcal{L}(x)$ is non-increasing in both flow and jump sets. Hence, for any $x(0,0)\in \varOmega_c  \times \mathcal{W}_d \subset \mathcal{S}$, one has $x(t,j)\in \varOmega_{\mathcal{L}}$ for all $(t,j) \in \dom x$ and the number of jumps is bounded by $J_M:=\left\lceil {\mathcal{L}^*}/{(k_R \delta)}\right\rceil$. Using the facts $ \|\psi(R_e\T \nabla_{R_e} U(R_e,\theta))\|^2 \leq  \alpha_1 U(R_e,\theta) \leq \frac{\alpha_1}{ k_R}\mathcal{L}^*$ and $   \omega_e\T J \omega_e \leq 2\mathcal{L}^*$, it follows that there exist constants $c_{\psi},c_{\omega_e}>0$  such that $\|\psi(R_e\T \nabla_{R_e} U(R_e,\theta))\| \leq c_{\psi}$ and $\|\omega_e\| \leq c_{\omega_e}$ for all $(t,j)\in \dom x$.  Let $c_{\omega_r}: = \sup_{t\geq 0} \|\omega_r(t)\|$ since  $\mathcal{W}_d$ is compact by assumption. 
	Hence, from \eqref{eqn:Sigma_def}, \eqref{eqn:kappa}-\eqref{eqn:closed-loop1-F} and \eqref{eqn:dpsi}, for all  $x\in \varOmega_{\mathcal{L}} \cap \mathcal{F}_c$ one obtains  
	\begin{align} 
		&\frac{d}{dt} \omega_e\T J \psi(R_e\T \nabla_{R_e} U(R_e,\theta))  \nonumber \\
		&~ \leq    (\Sigma(R_e,\omega_e,\omega_r)\omega_e -\kappa(R_e,\theta,\omega_e) )\T \psi(R_e\T \nabla_{R_e} U(R_e,\theta))  \nonumber \\ 
		& \qquad +  \lambda_{\max}^J \|\omega_e\|   ( c_R \|\omega_e\| + c_\theta k_\theta |\nabla_{\theta} U(R_e,\theta)| ) \nonumber
		\\
		&~ \leq    c_{\psi} \lambda_{\max}^J  \|\omega_e\|^2 + 3   c_{\omega_r} \lambda_{\max}^J \|\omega_e\|   \|\psi(R_e\T \nabla_{R_e} U(R_e,\theta))\| \nonumber \\
		&\qquad - 2 k_R\|\psi(R_e\T \nabla_{R_e} U(R_e,\theta))\|^2 \nonumber \\ 
		& \qquad  + k_\omega  \|\omega_e\|\|\psi(R_e\T \nabla_{R_e} U(R_e,\theta))\|  \nonumber \\ 
		& \qquad +   \lambda_{\max}^J\|\omega_e\|   ( c_R \|\omega_e\| + c_\theta k_\theta |\nabla_{\theta} U(R_e,\theta)| ) \nonumber
		\\
		&~ \leq  \eta\T \setlength\arraycolsep{2.6pt}
		\underbrace{\begin{bmatrix}  
				-2k_R & 0 & \frac{(3  c_{\omega_r} \lambda_{\max}^J+ k_\omega)}{2}\\
				0 & 0 &  \frac{  c_\theta k_\theta \lambda_{\max}^J}{2} \\
				\frac{(3 c_{\omega_r} \lambda_{\max}^J  + k_\omega)}{2} &  \frac{c_\theta k_\theta \lambda_{\max}^J  }{2}   & \lambda_{\max}^J(c_{\psi} + c_R)
		\end{bmatrix}}_{P_{\varepsilon}} \eta
		\label{eqn:domega_psi} 
	\end{align}
	where $\eta := [\|\psi(R_e\T \nabla_{R_e} U(R_e,\theta))\|,|\nabla_{\theta} U(R_e,\theta)|,\|\omega_e\|]\T \in \mathbb{R}^3$, and the following facts $\|\psi(R_e\T \nabla_{R_e} U(R_e,\theta)) \|  \leq c_\psi$, $\|\omega_r\|\leq c_{\omega_r}$, and  
	$
	(\Sigma(R_e,\omega_e,\omega_r)\omega_e  )\T \psi(R_e\T \nabla_{R_e} U(R_e,\theta))  
	=   ((J\omega_e)^\times \omega_e   +       (JR_e\T \omega_r)^\times \omega_e -  (R_e\T \omega_r)^\times J \omega_e  + J(R_e\T \omega_r)^\times \omega_e) \T  \\  
	\psi(R_e\T \nabla_{R_e} U(R_e,\theta)) \leq    3 c_{\omega_r} \lambda_{\max}^J  \|\omega_e\| \|\psi(R_e\T \nabla_{R_e} U(R_e,\theta))\| \\ +   c_\psi \lambda_{\max}^J \|\omega_e\|^2$, 	
	were used.
	
	Therefore, from \eqref{eqn:dotLx} and  \eqref{eqn:domega_psi},  the time derivative of $\mathcal{L}_{\varepsilon}$ along the flows of \eqref{eqn:closed-loop1} can be written as
	
	\begin{align}		
		\dot{\mathcal{L}}_{\varepsilon}(x)
		&= \dot{\mathcal{L}}(x) + \varepsilon \frac{d}{dt} \omega_e\T J \psi(R_e\T \nabla_{R_e} U(R_e,\theta))  \nonumber \\ 
		&  \leq -k_\omega \|\omega_e\|^2   -  k_R k_\theta \|\nabla_{\theta} U(R_e,\theta)   \|^2  - \varepsilon \eta\T P_{\varepsilon} \eta  \nonumber \\
		&  = -\eta\T \underbrace{\left(  \begin{bmatrix}
				0 & 0 & 0\\
				0 &  k_R k_\theta & 0\\
				0 & 0 & k_\omega
			\end{bmatrix} - \varepsilon P_{\varepsilon} \right)}_{P_3} \eta 
		\label{eqn:dL_varepsilonx_1}
	\end{align}	 
	for all $x\in \varOmega_{\mathcal{L}} \cap \mathcal{F}_c$.
	Let   $\eta_{ij}=[\eta_i,\eta_j]\T, i,j \in \{1,2,3\}$ with $\eta_i$ denoting the $i$-th element of the vector $\eta$. From \eqref{eqn:dL_varepsilonx_1} and the definition of $P_{\varepsilon}$ in \eqref{eqn:domega_psi}, one has
	\begin{align*}
		\eta\T P_3 \eta & = -\eta_{13}\T  \begin{bmatrix}
			2\varepsilon k_R & -\frac{\varepsilon(3   c_{\omega_r} \lambda_{\max}^J + k_\omega)}{2} \\
			-\frac{\varepsilon(3   c_{\omega_r} \lambda_{\max}^J + k_\omega)}{2} & \frac{ k_\omega - 2\varepsilon \lambda_{\max}^J  (c_{\psi} + c_R) }{2}
		\end{bmatrix} \eta_{13}  \nonumber \\
		&\quad   -\eta_{23}\T   \begin{bmatrix}
			k_R k_\theta & -\frac{\varepsilon   c_\theta k_\theta \lambda_{\max}^J }{2} \\
			-\frac{\varepsilon   c_\theta k_\theta \lambda_{\max}^J }{2} & \frac{k_\omega}{2}
		\end{bmatrix} \eta_{23}  
	\end{align*}
	To ensure that  $P_3$ is positive definite,   it is sufficient to choose  
	$ 
	\varepsilon  <  
	\min   \{\frac{4k_R k_\omega}{  (3   c_{\omega_r} \lambda_{\max}^J + k_\omega)^2 + 8k_R \lambda_{\max}^J(c_{\psi} + c_R)},  \frac{2\sqrt{k_\omega k_R k_\theta}}{  c_\theta k_\theta\lambda_{\max}^J}  \}:=\varepsilon^*_2.
	$
	From  \eqref{eqn:assump2_3}, one can show that for any $x\in \mathcal{F}_c$, $\|\eta\|^2 = \|\psi(R_e\T \nabla_{R_e} U(R_e,\theta))\|^2 + |\nabla_{\theta} U(R_e,\theta)|^2 + \|\omega_e\|^2 \geq \alpha_2 U(R_e,\theta) + \|\omega_e\|^2 \geq c_\eta |x|_{\mathcal{A}}^2$ with $c_\eta: = \min\{\alpha_2,1\}$. 
	Hence, from the definition of $|x|_{\mathcal{A}}^2$ and \eqref{eqn:dL_varepsilonx_1}, the time derivative of $\mathcal{L}_{\varepsilon}$ can be rewritten as
	\begin{align}
		\dot{\mathcal{L}}_{\varepsilon}(x) & \leq -\lambda_{\min}^{P_3} \|\eta\|^2  \leq -  c_\eta  \lambda_{\min}^{P_3}|x|_{\mathcal{A}}^2  , ~ \forall x\in \varOmega_{\mathcal{L}} \cap \mathcal{F}_c		.  \label{eqn:dL_varepsilonx}
	\end{align} 
	Thus, $\mathcal{L}_{\varepsilon}$  has an exponential decrease over the flows of \eqref{eqn:closed-loop1}.
	
	On the other hand, from \eqref{eqn:closed-loop1-J}, \eqref{eqn:Lx+} and \eqref{eqn:L_varepsilonx} one obtains
	\begin{align}
		\mathcal{L}_{\varepsilon}(x^+) - \mathcal{L}_{\varepsilon}(x)  & \leq 
		\mathcal{L}(x^+)  - \mathcal{L}(x) + 2 \varepsilon   c_\psi c_{\omega_e} \lambda_{\max}^J \nonumber \\	 
		& \leq -k_R \delta  + 2 \varepsilon   c_\psi c_{\omega_e}  \lambda_{\max}^J \nonumber \\
		& <  0  , \quad \forall x\in \varOmega_{\mathcal{L}} \cap  \mathcal{J}_c \label{eqn:L_varepsilonx+} 
	\end{align}
	where $\varepsilon$ is chosen as $\varepsilon<\min\{\varepsilon^*_1,\varepsilon^*_2, \frac{k_R\delta}{2   c_\psi c_{\omega_e} \lambda_{\max}^J}\}$, and we made use of the facts $\|\psi(R_e\T \nabla_{R_e} U(R_e,\theta))\|\leq c_{\psi}$ and $\|\omega_e\| \leq c_{\omega_e}$. Thus, $\mathcal{L}_{\varepsilon}$ is strictly decreasing over the jumps of \eqref{eqn:closed-loop1}. Using similar arguments as the ones used at the end of the proof of Theorem \ref{them:1}, it follows that every maximal solution to \eqref{eqn:closed-loop1} is complete.
	In view of \eqref{eqn:L_varepsilonx_bound}, \eqref{eqn:dL_varepsilonx} and \eqref{eqn:L_varepsilonx+},  one can show that 
	$
	\mathcal{L}_{\varepsilon}(x(t,j)) \leq  \exp(-\lambda t) \mathcal{L}_{\varepsilon}(x(0,0))  \leq  \exp(\lambda J_M)\exp(-\lambda (t+j)) \mathcal{L}_{\varepsilon}(x(0,0))
	$
	for all $(t,j)\in \dom x$ and $x(0,0)\in  \varOmega_c  \times \mathcal{W}_d \subseteq \varOmega_{\mathcal{L}}$ with $\lambda: =  {c_\eta  \lambda_{\min}^{P_3}}/{\lambda_{\max}^{P_2}}$ and $J_M$ denoting the maximum number of jumps. Letting $k := \exp( \lambda J_M) \lambda_{\max}^{P_2}/\lambda_{\min}^{P_1} $ and making use of \eqref{eqn:L_varepsilonx_bound}, one concludes that $|x(t,j)|_{\mathcal{A}}^2 \leq k \exp(-\lambda(t+j))|x(0,0)|_{\mathcal{A}}^2$ 	for all $(t,j)\in \dom x$. This completes the proof.

	\subsection{Proof of Lemma \ref{lem:W}} \label{sec:lemW} 
	
	From the definitions of  $\widehat{\mathcal{A}}_o$ and $\Psi_{W}$, one has $\widehat{\mathcal{A}}_o\in \Psi_{W}$ and $\Psi_W\setminus\{\widehat{\mathcal{A}}_o\}=\{(R_e,\theta,\zeta)\in \mho: (R_e,\theta)\in \Psi_{U}\setminus\{\mathcal{A}_o\}, \zeta = 0 \}$. By Assumption \ref{assum:1}, it follows from \eqref{eqn:u_U_delta} that $\mu_U(R_e,\theta) > \delta$ for all $(R_e,\theta, \zeta)\in \Psi_W\setminus\{\widehat{\mathcal{A}}_o\}$. In view of the definitions of $W$ and $\mu_W$ in \eqref{eqn:defu_W}-\eqref{eqn:defW}, for any $(R_e,\theta, \zeta)\in \Psi_W\setminus\{\widehat{\mathcal{A}}_o\}$ one can show that 
	\begin{align}
		& \mu_W(R_e,\theta,\zeta) \nonumber \\
		&\quad = W(R_e,\theta,0) - \min_{{\theta}'\in \Theta}W(R_e,{\theta}',0) \nonumber \\
		&\quad = U(R_e,\theta) +   {\varrho}\|\psi (R_e\T \nabla_{R_e} U(R_e,{\theta}))\|^2  \nonumber \\
		&\qquad - \min_{{\theta}'\in \Theta} \left( U(R_e,{\theta}')   +  {\varrho}\|\psi (R_e\T \nabla_{R_e} U(R_e,{\theta}'))\|^2 \right) \nonumber\\	 
		&\quad\geq  \mu_U(R_e,\theta) -   {\varrho}c_\psi^2    > \delta -   {\varrho}  c_\psi^2 
	\end{align}
	where we made use of the facts   $\|\psi (R\T \nabla_{R} U(R,\theta))\|=0$ for all $(R,\theta)\in \Psi_{U}\setminus\{\mathcal{A}_o\}$ and $\|\psi (R\T \nabla_{R} U(R,\theta))\| \leq c_\psi$ for all $(R,\theta)\in SO(3)\times \mathbb{R}$ thanks to Assumption \ref{assum:4}. By choosing $\varrho< {(\delta-\delta')}/{c_\psi^2} $, one concludes \eqref{eqn:mu_W_delta}. This completes the proof.
	
	\subsection{Proof of Theorem \ref{them:2}} \label{sec:them2}
	Consider the following Lyapunov function candidate:
	\begin{equation}
		\widehat{\mathcal{L}}(\hat{x}) =  k_R W(R_e,\theta,\zeta)  + \frac{1}{2} \omega_e\T J \omega_e \label{eqn:Lx2}
	\end{equation}
	where $W$  in \eqref{eqn:defW} is a potential function on $\mho$ with respect to  $\widehat{\mathcal{A}}_o$. For the sake of simplicity, we will use the following notations $\psi := \psi(R_e\T \nabla_{R_e} U(R_e,\theta))$ and $\dot{\psi}:=\frac{d}{dt}\psi(R_e\T \nabla_{R_e} U(R_e,\theta))$. From Assumption \ref{assum:1}, it follows that $\psi  =0$ for all $(R_e,\theta)\in \mathcal{A}_o$. Hence, one can verify that $\widehat{\mathcal{L}}$ is positive definite on $\widehat{\mathcal{S}}$ with respect to $\widehat{\mathcal{A}}$. In view of \eqref{eqn:Sigma_def}, \eqref{eqn:dpsi}, \eqref{eqn:h_def}  and \eqref{eqn:hatF}, the time derivative of $\widehat{\mathcal{L}}$ along the flows of \eqref{eqn:closed-loop2} is given by 
	\begin{align}
		\dot{\widehat{\mathcal{L}}}(\hat{x}) & = 2 k_R \omega_e\T \psi   - k_R k_\theta |\nabla_{\theta} U(R_e,\theta)|^2  - \omega_e\T (2 k_R \zeta + k_\omega  \omega_e)  \nonumber\\
		& ~~~ +     {2k_R }{\varrho} (\zeta-\psi)\T ( -\dot{\psi}    -   {k}_\zeta (\zeta- \psi)) \nonumber\\
		&\leq  -k_\omega \|\omega_e\|^2   -  k_R k_\theta |\nabla_{\theta} U(R_e,\theta) |^2   -    {2k_R k_\zeta }{\varrho}  \|\zeta-\psi\|^2 \nonumber \\
		&\quad + 2k_R \|\psi-\zeta\| \|\omega_e\| +  {2k_R }{\varrho} \|\zeta-\psi\| \| \dot{\psi}\|   \nonumber \\
		& \leq  -\eta\T  {P_4} \eta  
	\end{align}
	where $\eta=[\|\omega_e\|,|\nabla_{\theta} U(R_e,\theta)|,\|\psi-\zeta\|]\T$ and  
	\begin{align*}
		P_4: = \begin{bmatrix}
			k_\omega  & 0 & -k_R(1+ {  c_R}{\varrho} )\\
			0 & k_R k_\theta & -{k_R   c_\theta k_\theta}{\varrho} \\
			-k_R(1+ {  c_R}{\varrho} ) & - {k_R   c_\theta k_\theta}{\varrho} &  {2k_R k_\zeta }{\varrho}
		\end{bmatrix}.
	\end{align*} 
	Similar to the matrix $P_3$ in \eqref{eqn:dL_varepsilonx_1}, to guarantee that the matrix $P_4$ is positive definite, it is sufficient to choose
	$
	k_\zeta > \max \left\{{k_R(1 + \varrho c_R)^2}/{\varrho k_\omega},  {c_\theta^2 k_\theta}{\varrho}\right\}:= k_\zeta^*.
	$
	Hence, the time derivative of $\widehat{\mathcal{L}}$   along the flows of \eqref{eqn:closed-loop2} can be rewritten as
	\begin{align}
		\dot{\widehat{\mathcal{L}}}(\hat{x})  
		& \leq  - \lambda_{\min}^{P_4} \|\eta\|^2  \leq 0, \quad  \forall \hat{x}\in \widehat{\mathcal{F}}_c. \label{eqn:dotLx2} 
	\end{align}  
	Thus, $\widehat{\mathcal{L}}$ is non-increasing along the flows of \eqref{eqn:closed-loop2}. Moreover, in view of \eqref{eqn:defW}-\eqref{eqn:closed-loop2} and \eqref{eqn:Lx2}, for any $\hat{x}\in \widehat{\mathcal{J}}_c$, one has $\hat{x}^+ = (R_e, {\theta}^+,\zeta,\omega_e,R_r,\omega_r) $ with $ {\theta}^+ \in  {g}(R_e,\theta)$, and  
	\begin{align}
		\widehat{\mathcal{L}}(\hat{x}^+) - \widehat{\mathcal{L}}(\hat{x}) 
		&= -k_R  ( W(R_e,\theta,\zeta) -     W(R_e,{\theta}^+,\zeta)  )  \nonumber \\ 	 
		&=  - k_R \mu_W(R_e,\theta,\zeta)  \nonumber \\
		&< -k_R \delta'   \label{eqn:Lx+2} 
	\end{align}
	where we made use of \eqref{eqn:mu_W_delta} in Lemma \ref{lem:W}.
	Thus, $\widehat{\mathcal{L}}$ is strictly decreasing over the jumps of the hybrid system \eqref{eqn:closed-loop2}.  It follows from \eqref{eqn:dotLx2} and \eqref{eqn:Lx+2} that the set $ \widehat{\mathcal{A}}$ is stable as per \cite[Theorem 23]{goebel2009hybrid}, and the maximum number of jumps is given by
	$
	J_M:=   \lceil  {\widehat{\mathcal{L}}(\hat{x}(0,0))}/{(k_R \delta')} \rceil
	$. 
	Moreover, applying the invariance principle in \cite[Theorem 4.7]{sanfelice2007invariance}, any maximal solution to \eqref{eqn:closed-loop2} must converge to the largest invariant set contained in
	$
	\widehat{\mathcal{W}}:
	=\{\hat{x}\in \widehat{\mathcal{F}}_c  : \nabla_{\theta} U(R_e,\theta)=0,  \omega_e=0, 
	\zeta=\psi \}.
	$
	From $\omega_e\equiv 0$, one has $\dot{\omega}_e \equiv 0$, which in view of \eqref{eqn:Jomega_e} and \eqref{eqn:hybrid_feedback2},  implies that $\zeta = \psi(R_e\T \nabla_{R_e} U(R_e,\theta))  = 0$. Then, it follows from $ \|\zeta\|= \|\psi(R_e\T \nabla_{R_e} U(R_e,\theta))\| = \nabla_{\theta} U(R_e,\theta)=0$ that $(R_e,\theta,\zeta)\in \Psi_W$. Thus, any solution  to \eqref{eqn:closed-loop2} must converge to  the largest invariant set contained in $\widehat{ {\mathcal{W}}}'
	:= \{\hat{x}\in \widehat{\mathcal{F}}_c ~|~  (R_e,\theta,\zeta)\in \widehat{\mathcal{F}} \cap \Psi_W,   \omega_e = 0 \}$. Similar to the proof of Theorem \ref{them:1}, applying simple set-theoretic arguments, one obtains $\widehat{\mathcal{F}} \cap \Psi_W = \{\widehat{\mathcal{A}}_o\}$ and $\widehat{\mathcal{W}}' = \widehat{\mathcal{A}}$. Moreover, following similar arguments as the ones used at the end of the proof of Theorem \ref{them:1}, one can show that  every maximal solution to \eqref{eqn:closed-loop2} is complete. Finally, one can conclude that the set $\widehat{\mathcal{A}}$ is globally asymptotically stable for the hybrid system \eqref{eqn:closed-loop2}.  This completes the proof.

	\subsection{Proof of Theorem \ref{them:3}} \label{sec:them3}
	Consider the following Lyapunov function candidate:
	\begin{equation}
		\overline{\mathcal{L}}(\bar{x}) =  k_R U(R_e,\theta) + k_\beta U(\tilde{R},\bar{\theta}) + \frac{1}{2} \omega_e\T J \omega_e. \label{eqn:barLx}
	\end{equation}
	Since $U$ is a potential function on $SO(3)\times \mathbb{R}$, one can  verify that $\overline{\mathcal{L}}$ is positive definite on $\overline{\mathcal{S}}$ with respect to $\overline{\mathcal{A}}$.  The time derivative of $\overline{\mathcal{L}}$ along the flows of \eqref{eqn:closed-loop3} is given by
	\begin{align}
		\dot{\overline{\mathcal{L}}}(\bar{x})
		&=   k_R\dot{U}(R_e,\theta) +  k_\beta \dot{U}(\tilde{R},\bar{\theta})   - \omega_e\T  \bar{\kappa}(R_e,\theta,\tilde{R},\bar{\theta}) \label{eqn:dotbarL}
	\end{align}
	where we made use of the fact $\omega_e\T  \Sigma(R_e,\omega_e,\omega_r)\omega_e =0$. From \eqref{eqn:Re} and \eqref{eqn:bar_R}, one obtains $\dot{\tilde{R}} = \tilde{R}(\omega_e-\beta)^\times$.  From \eqref{eqn:dynamics_theta}, \eqref{eqn:ftheta} and \eqref{eqn:bar_R}, one obtains
	\begin{multline}
		\dot{U}(\tilde{R},\bar{\theta})
		= 2 (\omega_e-\beta)\T \psi (\tilde{R}\T \nabla_{\tilde{R}} U(\tilde{R},\bar{\theta}))  \\
		-   k_\theta |\nabla_{\bar{\theta}} U(\tilde{R},\bar{\theta})|^2.   \label{eqn:dotU3}
	\end{multline}
	Substituting \eqref{eqn:beta}, \eqref{eqn:kappa2}, \eqref{eqn:dotU_Re} and \eqref{eqn:dotU3} into \eqref{eqn:dotbarL}, the time derivative of $\overline{\mathcal{L}}$ along the flows of \eqref{eqn:closed-loop3} can be rewritten as
	\begin{align}
		\dot{\overline{\mathcal{L}}}(\bar{x})
		& =  - k_R k_\theta |\nabla_{\theta} U(R_e,\theta)|^2 -  k_\beta k_\theta |\nabla_{\bar{\theta}} U(\tilde{R},\bar{\theta})|^2\nonumber\\ 
		&~~~ - 2k_\beta  \psi (\tilde{R}\T \nabla_{\tilde{R}} U(\tilde{R},\bar{\theta}))\T \Gamma  \psi (\tilde{R}\T \nabla_{\tilde{R}} U(\tilde{R},\bar{\theta}))   \label{eqn:dotbarLx} 
	\end{align}
	for all $\bar{x}\in \overline{\mathcal{F}}_c$. 
	Since the matrix $\Gamma$ is symmetric positive definite, $\dot{\overline{\mathcal{L}}}$ is negative semi-definite in the flow set and $\overline{\mathcal{L}}$ is non-increasing along the flows of \eqref{eqn:closed-loop3}. For any $\bar{x}  \in\overline{\mathcal{J}}_c$, one obtains $\bar{x}^+  \in (R_e, g(R_e,\theta),\omega_e,R_r,\omega_r,\tilde{R},\bar{\theta}) $ if $\bar{x}   \in\overline{\mathcal{J}}_{c1}\setminus\overline{\mathcal{J}}_{c2}$,  
	$\bar{x}^+ \in (R_e,\theta,\omega_e,R_r,\omega_r,\tilde{R},g(\tilde{R},\bar{\theta})) $ if   $\bar{x}   \in\overline{\mathcal{J}}_{c2}\setminus\overline{\mathcal{J}}_{c1}$, or $\bar{x}^+ \in (R_e,g(R_e,\theta),\omega_e,R_r,\omega_r,\tilde{R},g(\tilde{R},\bar{\theta})) $ if $\bar{x}   \in\overline{\mathcal{J}}_{c1}\cap \overline{\mathcal{J}}_{c2}$.
	Similar to \eqref{eqn:Lx+}, in view of \eqref{eqn:closed-loop3} and \eqref{eqn:barLx}, one can show that
	\begin{align}
		\overline{\mathcal{L}}(\bar{x}^+) - \overline{\mathcal{L}}(\bar{x})  
		&  < - k^* \delta \label{eqn:barLx+}
	\end{align}
	for all $\bar{x}  \in\overline{\mathcal{J}}_c$  with $k^* := \min\{k_R,k_\beta\}$. 
	Thus, $\overline{\mathcal{L}}$ is strictly decreasing over the jumps of \eqref{eqn:closed-loop3} on $\overline{\mathcal{J}}_c$.  Similar to the proof of Theorem \ref{them:1}, from \eqref{eqn:dotbarLx} and \eqref{eqn:barLx+}   one concludes that the set $\overline{\mathcal{A}}$ is stable as per \cite[Theorem 23]{goebel2009hybrid},  and the number of jumps is bounded by $J_M:=\left\lceil {\overline{\mathcal{L}}(\bar{x}(0,0))}/{(k^* \delta)}\right\rceil$. 
	
	Next, we will show the global attractivity of set $\overline{\mathcal{A}}$. Applying the invariance principle for hybrid systems given in \cite[Theorem 4.7]{sanfelice2007invariance}, one obtains that every solution $\bar{x}$ to \eqref{eqn:closed-loop3} must converge to the largest invariant set contained in
	$
	\overline{\mathcal{W}}
	:=\{\bar{x}\in \overline{\mathcal{F}}_c : \nabla_{\theta} U(R_e,\theta)=0,  \psi(\tilde{R}\T \nabla_{\tilde{R}} U(\tilde{R},\bar{\theta}))=0,  
	\nabla_{\bar{\theta}} U(\tilde{R},\bar{\theta})=0\}.
	$
	For each $\bar{x}\in \overline{\mathcal{W}}$, it follows that $\nabla_{\theta} U(R_e,\theta)=0$ and  $(\tilde{R},\bar{\theta})\in \mathcal{F} \cap \Psi_U$. Similar to the proof of Theorem \ref{them:1}, one has $(\tilde{R},\bar{\theta})\in \mathcal{F} \cap \Psi_U = \{\mathcal{A}_o\}$. From $\tilde{R} \equiv I_3$ one obtains  $\dot{\tilde{R}} = 0$ and $ \omega_e-\beta = 0$. Recall the definition of $\beta$ in \eqref{eqn:beta}, it follows from $(\tilde{R},\bar{\theta})= \mathcal{A}_o$  that  $\omega_e = \beta = 0$. From $\omega_e\equiv 0$, one has $\dot{\omega}_e = 0$. Since $\omega_e= \dot{\omega}_e =0$ and $\psi (\tilde{R}\T \nabla_{\tilde{R}} U(\tilde{R},\bar{\theta}))= 0$, it follows from  \eqref{eqn:Jomega_e}, \eqref{eqn:hybrid_feedback3} and \eqref{eqn:kappa2}  that $\psi(R_e\T \nabla_{R_e} U(R_e,\theta)) = 0$. Using this fact, together with $ \nabla_{\theta} U(R_e,\theta)= 0$, one can show that $(R_e,\theta)= \mathcal{A}_o$. Hence, one verifies that $\overline{\mathcal{W}} = \overline{\mathcal{A}}$ from the definitions of $\overline{\mathcal{W}}$ and $\overline{\mathcal{A}}$. Using similar arguments as the ones used at the end of the proof of Theorem \ref{them:1}, it follows that every maximal solution to \eqref{eqn:closed-loop3} is complete.
	Finally, one can conclude that the set $\overline{\mathcal{A}}$ is globally asymptotically stable for the hybrid system \eqref{eqn:closed-loop3}.  This completes the proof.

	\subsection{Proof of Lemma \ref{lemma:U}} \label{sec:lemma_U}
	From \eqref{eqn:transfT}, the time derivative of the transformation map $\mathcal{T}$ along the trajectories of $\dot{R}=R\omega^\times$ and $\dot{\theta}=\nu$ is given by
	\begin{align*}
		\dot{\mathcal{T}}(R,\theta)  &= R \omega^\times  \mathcal{R}_a(\theta,u) + \nu R 		\mathcal{R}_a(\theta,u)u^\times  \\
		&= \mathcal{T}(R,\theta) (\mathcal{R}_a(\theta,u)\T \omega + \nu u)^\times
	\end{align*}
	where we made use of the facts: $\mathcal{R}_a(\theta,u)=\exp(\theta u^\times)$ and $ \dot{\mathcal{R}}_a(\theta,u) = \frac{d}{dt}\exp(\theta u^\times) =  \mathcal{R}_a(\theta,u) \nu u^\times$. The gradients $\nabla_{R} U(R,\theta)$ and $\nabla_{\theta} U(R,\theta)$ can be computed from the differential of $U$ in an arbitrary tangential direction $(R\omega^\times,\nu) \in T_R SO(3) \times \mathbb{R}$, which is given as
	\begin{align}
		\dot{U}(R,\theta)
		& =  \langle \nabla_{R} U(R,\theta), R \omega^\times   \rangle_R  +  \langle \langle  \nabla_{\theta} U(R,\theta), \nu \rangle \rangle\nonumber \\
		& =  \langle\langle R\T \nabla_{R} U(R,\theta), \omega^\times \rangle \rangle  +  \langle \langle \nabla_{\theta} U(R,\theta), \nu \rangle \rangle \nonumber\\
		& = 2 \omega\T \psi (R\T \nabla_{R} U(R,\theta)) +  \nu \nabla_{\theta} U(R,\theta)  \label{eqn:dotU}
	\end{align}
	where we made use of the property $\langle\langle A, x^\times\rangle\rangle = 2x\T \psi (A) $. On the other hand, from \eqref{eqn:URtheta} and \eqref{eqn:dTRtheta} the time derivative of $U$ can be directly obtained as
	\begin{align}
		\dot{U}(R,\theta) 
		& = \tr(-A\mathcal{T}(R,\theta) (\mathcal{R}_a(\theta,u)\T \omega + \nu u)^\times) + \gamma \theta \nu \nonumber \\
		& = \langle\langle A\mathcal{T}(R,\theta), (\mathcal{R}_a(\theta,u)\T  \omega + \nu u)^\times  \rangle\rangle  + \gamma \theta \nu \nonumber \\
		& = \langle\langle \mathbb{P}_a(A\mathcal{T}(R,\theta)), (\mathcal{R}_a(\theta,u)\T  \omega + \nu u)^\times \rangle\rangle   + \gamma \theta \nu \nonumber \\
		& = 2\omega\T \mathcal{R}_a(\theta,u) \psi(A\mathcal{T}(R,\theta))   \nonumber \\
		&\qquad  \qquad \qquad + \nu \left( 2u\T \psi (A\mathcal{T}(R,\theta))  + \gamma \theta  \right)
		\label{eqn:dotU2}
	\end{align}
	where we made use of the facts: $(x^\times)\T = - x^\times$, $\tr(A\T B) = \langle\langle A, B\rangle\rangle $ and $ \langle\langle A, x^\times \rangle\rangle = 2x\T \psi (A)$  and $\psi(\mathbb{P}_a(A)) = \psi(A)$ for all $x\in \mathbb{R}^3, A,B\in \mathbb{R}^{3\times 3}$. In view of \eqref{eqn:dotU} and \eqref{eqn:dotU2}, one can easily obtain \eqref{eqn:gradient_R_U} and \eqref{eqn:gradient_theta_U}. 
	
	In view of \eqref{eqn:gradient_R_U} and \eqref{eqn:gradient_theta_U}, it follows from $ |\nabla_{\theta} U(R,\theta)| =0$  and $ \|\psi (R\T \nabla_{R} U(R,\theta))\|=0$   that $ \|\psi(A\mathcal{T}(R,\theta))\| =\theta = 0$. Recall the definition of $\mathcal{T}(R,\theta)$   in \eqref{eqn:transfT}, one can further show that $\psi(A\mathcal{T}(R,\theta))=\psi(AR)=0$ since $\mathcal{T}(R,\theta)=I_3$ as $\theta=0$. Using the fact $\psi(AR)=0$, one obtains $\mathbb{P}_a(AR)=0$, which implies that $AR=R\T A$ from the definition of the map $\mathbb{P}_a$.  Applying \cite[Lemma 2]{mayhew2011synergistic}, one obtains $R\in \Psi_V  = \{I_3\} \cup \mathcal{R}_a(\pi,\mathcal{E}(A))$ with $\mathcal{E}(A)$ denoting the set of eigenvectors of $A$. Using this result, together with $\theta=0$, one can conclude that the set of all the critical points of  $U(R,\theta)$   in \eqref{eqn:defu_U} is given as $\Psi_U = \Psi_V \times \{0\}$ and   $\mathcal{A}_o\in \Psi_U$, which gives \eqref{eqn:crit_U_R}.
	
	On the other hand, applying the properties of $\psi$ given in \cite[Lemma 1]{berkane2017hybrid2}, the time derivative of $\psi(A\mathcal{T}(R,\theta))$ is given by $\dot{\psi}(A\mathcal{T}(R,\theta)) = E(A\mathcal{T}(R,\theta))  ( \mathcal{R}_a\T(\theta,u)\omega +  vu )$ along the trajectories of $\dot{R}=R\omega^\times$ and $\dot{\theta}=v$. Then, in view of  \eqref{eqn:dTRtheta}-\eqref{eqn:gradient_R_U}, the time derivative of  $ \psi(R\T \nabla_{R} U(R,\theta))$   is given by
	\begin{align}
		&\dot{\psi}(R\T \nabla_{R} U(R,\theta)) \nonumber \\
		& \quad=  \dot{\mathcal{R}}_a(\theta,u) \psi(A\mathcal{T}(R,\theta)) +  \mathcal{R}_a(\theta,u) \dot{\psi}(A\mathcal{T}(R,\theta)) \nonumber \\
		& \quad  = -\left( \mathcal{R}_a(\theta,u) \psi(A\mathcal{T}(R,\theta))\right) ^\times  vu \nonumber \\
		&  \qquad ~ + \mathcal{R}_a(\theta,u)E(A\mathcal{T}(R,\theta))  ( \mathcal{R}_a\T(\theta,u)\omega +  vu ) \nonumber \\
		&  \quad =  \mathcal{D}_{R} (R,\theta) \omega + \mathcal{D}_{\theta} (R,\theta) v.  
	\end{align}
	This completes the proof.

	\subsection{Proof of Proposition \ref{prop:3}} \label{sec:prop3}

	For the sake of simplicity, let $\mathcal{T}=\mathcal{T}(R,\theta)$.
	From   \cite[Lemma 2]{berkane2017hybrid2}, one has the following properties for any $\mathcal{T}\in SO(3)$:
	\begin{align}
		4 \lambda_{\min}^{\bar{A}} |\mathcal{T}|_I^2 &\leq \tr(A(I-\mathcal{T})) \leq 4 \lambda_{\max}^{\bar{A}} |\mathcal{T}|_I^2  \label{eqn:property_tr} \\
		\|\psi (A \mathcal{T})\|^2 & = \alpha_A(\mathcal{T}) \tr(\underline{A}(I_3-\mathcal{T})) \label{eqn:property_psi} 
	\end{align}	
	where matrices $\bar{A}  = \frac{1}{2}(\tr(A)I_3-A), \underline{A} = \tr(\bar{A}^2)I_3 - 2\bar{A}^2$ are symmetric positive definite as matrix $A$ is symmetric positive definite, and $\alpha_A(\mathcal{T})  = 1-|\mathcal{T}|_I^2 \cos^2\measuredangle(u,\bar{A}u)$ with $\measuredangle(~,~)$ denoting the angle between two vectors and $u$ denoting the axis of the rotation matrix $\mathcal{T}$. Using the facts $\frac{1}{2}(\tr(\underline{A})I_3-\underline{A}) = \bar{A}^2$ and $\alpha_A(\mathcal{T})<1, \forall \mathcal{T}\in SO(3)$,  one obtains from \eqref{eqn:property_tr} that
	\begin{align} 
		4\alpha_A(\mathcal{T}) (\lambda_{\min}^{\bar{A}})^2 |\mathcal{T}|_I^2&\leq  \|\psi (A \mathcal{T})\|^2 \leq 4  (\lambda_{\max}^{\bar{A}})^2 |\mathcal{T}|_I^2 \label{eqn:property_psi2}. 
	\end{align} 
	From \eqref{eqn:gradient_R_U} and  \eqref{eqn:property_psi2},  one can   show that Assumption \ref{assum:3} holds by choosing $c_\psi\geq 2 \lambda_{\max}^{\bar{A}}$ since that $\|\psi(R\T \nabla_{R} U(R,\theta))\|^2 \leq \|\mathcal{R}_a(\theta,u) \psi(A\mathcal{T})\| \leq 4  (\lambda_{\max}^{\bar{A}})^2 |\mathcal{T}|_I^2 \leq 4  (\lambda_{\max}^{\bar{A}})^2$ for all $(R,\theta)\in SO(3)\times \mathbb{R}$. 
	
	Next, we are going to verify the conditions in Assumption \ref{assum:2}. 
	From \eqref{eqn:URtheta}, \eqref{eqn:gradient_R_U}-\eqref{eqn:gradient_theta_U} and \eqref{eqn:property_tr}-\eqref{eqn:property_psi2}, one can show that 
	\begin{align} 
		& \|\psi(R\T \nabla_{R} U(R,\theta))\|^2 + |\nabla_{\theta} U(R,\theta)|^2 \nonumber \\
		& \quad = \| \psi(A\mathcal{T})\|^2 + | \gamma \theta  |^2 + 4|u \T  \psi (A \mathcal{T})|^2 + 4\gamma \theta  u \T  \psi (A \mathcal{T}) \nonumber \\
		& \quad \leq 7\|\psi(A\mathcal{T})\|^2 + 3 \gamma^2 |\theta|^2   \nonumber \\
		& \quad\leq 28  (\lambda_{\max}^{\bar{A}})^2 |\mathcal{T}|_I^2 + 3\gamma^2 |\theta|^2 \nonumber \\
		& \quad\leq \frac{7 (\lambda_{\max}^{\bar{A}})^2}{\lambda_{\min}^{\bar{A}}} \tr(A(I-\mathcal{T})) +  6 \gamma \left(  \frac{\gamma}{2}   |\theta|^2 \right)  \nonumber \\
		&\quad \leq \alpha_1 U(R,\theta) , \quad \forall (R,\theta) \in SO(3)\times \mathbb{R}
	\end{align}
	where $\alpha_1: = \max\{\frac{7  (\lambda_{\max}^{\bar{A}})^2}{\lambda_{\min}^{\bar{A}}},6\gamma\}$, and we made use of the facts $\|u\|=1$, $ |u \T  \psi (A \mathcal{T})|  \leq   \|u \|  \|\psi (A \mathcal{T})\|  = \|\psi (A \mathcal{T})\|$ and $4\gamma \theta  u \T  \psi (A \mathcal{T}) \leq 4|\gamma \theta| \| \psi (A \mathcal{T})\|  \leq  2|\gamma \theta|^2 + 2\|\psi (A \mathcal{T})\|^2$. On the other hand, from  the definition  of $\mathcal{F}$ in \eqref{eqn:Fset}, one has $(R,\theta)\notin \Psi_U\setminus\{\mathcal{A}_o\}$ for all $(R,\theta) \in \mathcal{F}$. This implies that $\mathcal{T}\notin \Psi_V\setminus\{I_3\}=  \mathcal{R}_a(\pi,\mathcal{E}(\bar{A}))$. Hence, one obtains $\alpha_A(\mathcal{T})  = 1-|\mathcal{T}|_I^2 \cos^2\measuredangle(u,\bar{A}u)>0$ for all $(R,\theta) \in \mathcal{F}$. Letting $\alpha_A^*: = \inf_{(R,\theta) \in \mathcal{F}}\alpha_A(\mathcal{T})>0$, it follows from \eqref{eqn:property_psi2} that $\|\psi (A \mathcal{T})\|^2\geq 4\alpha_A^* (\lambda_{\min}^{\bar{A}})^2 |\mathcal{T}|_I^2 $ for all $(R,\theta) \in \mathcal{F}$. From \eqref{eqn:URtheta}, \eqref{eqn:gradient_R_U}-\eqref{eqn:gradient_theta_U} and \eqref{eqn:property_tr}-\eqref{eqn:property_psi2}, one can show that 
	\begin{align} 
		& \|\psi(R\T \nabla_{R} U(R,\theta))\|^2 + |\nabla_{\theta} U(R,\theta)|^2 \nonumber \\
		& \quad \geq \|\mathcal{R}_a(\theta,u) \psi(A\mathcal{T})\|^2 + \frac{1}{8}| \gamma \theta + 2u \T  \psi (A \mathcal{T})|^2 \nonumber \\
		& \quad \geq  \| \psi(A\mathcal{T})\|^2 + \frac{1}{8}\left( \frac{1}{2} | \gamma \theta  |^2  - 4 |u \T  \psi (A \mathcal{T}) |^2 \right)  \nonumber \\
		& \quad \geq \frac{1}{2}\|\psi(A\mathcal{T})\|^2 + \frac{1}{16} \gamma^2 |\theta|^2   \nonumber \\
		& \quad \geq 2\alpha_A^* (\lambda_{\min}^{\bar{A}})^2 |\mathcal{T}|_I^2 + \frac{1}{16} \gamma^2 |\theta|^2   \nonumber \\
		& \quad \geq \frac{\alpha_A^* (\lambda_{\min}^{\bar{A}})^2}{2 \lambda_{\max}^{\bar{A}}}  \tr(A(I-\mathcal{T})) + \frac{\gamma}{8}     \left(  \frac{\gamma}{2}   |\theta|^2 \right)  \nonumber \\
		&\quad \geq \alpha_2 U(R,\theta), \quad \forall (R,\theta) \in \mathcal{F} 
	\end{align}
	where  $\alpha_2:=\min\{\frac{\alpha_A^* (\lambda_{\min}^{\bar{A}})^2}{2 \lambda_{\max}^{\bar{A}}}, \frac{\gamma}{8}\}$, and we made use of the facts: $|u\T \psi (A \mathcal{T})| \leq \|u\| \|\psi (A \mathcal{T})\| \leq \|\psi (A \mathcal{T})\|$, $4 \gamma \theta  u \T  \psi (A \mathcal{T}) \geq -4 |\gamma \theta| |u\T \psi (A \mathcal{T})| \geq - \frac{1}{2} |\gamma \theta|^2 - 8|u\T \psi (A \mathcal{T})|^2$. From the definitions of $\alpha_1,\alpha_2$ and using the fact $6\gamma>\frac{\gamma}{8},\forall \gamma>0$, it is clear that $\alpha_1>\alpha_2$.

	Now, we are going to verify the conditions in Assumption \ref{assum:4}.  Applying the definitions of $\mathcal{D}_{R}(R,\theta)$ and $\mathcal{D}_{\theta}(R,\theta)$,  for each $(R,\theta)\in SO(3) \times \mathbb{R}$ one can show that   $\|\mathcal{D}_{R} (R,\theta)\|_F   = \|E(A\mathcal{T}(R,\theta)) \|_F \leq  \|\bar{A}\|_F$  and $
	\|\mathcal{D}_{\theta} (R,\theta)\|   \leq  \|E(A\mathcal{T}(R,\theta)) \|_F + \|\psi(A\mathcal{T}(R,\theta) \| \leq  \|\bar{A}\|_F + 2 \lambda_{\max}^{\bar{A}}  $ using the facts: $\|u\|=1$,  $\|E(AR) \|_F\leq \|\bar{A}\|_F$ and $\|\psi(AR\| \leq  2 \lambda_{\max}^{\bar{A}}$ for any $R\in SO(3)$ as per \cite[Lemma 2]{berkane2017hybrid2}. It follows from \eqref{eqn:ftheta} and \eqref{eqn:dot_psi} that 
	\begin{align*}
		&\|\dot{\psi}(R_e\T \nabla_{R_e} U(R_e,\theta))\|  \\ 
		&\quad  \leq  \|\mathcal{D}_{R} (R_e,\theta) \omega_e\| + k_\theta \|\mathcal{D}_{\theta} (R_e,\theta)   \| |\nabla_{\theta}U(R_e,\theta)| \\
		&\quad \leq  \|\bar{A}\|_F \|\omega\| + k_\theta (\|\bar{A}\|_F + 2 \lambda_{\max}^{\bar{A}}) |\nabla_{\theta}U(R_e,\theta)|
	\end{align*}
	for all $(R_e,\theta)\in SO(3) \times \mathbb{R}$.
	By choosing $c_R \geq \|\bar{A}\|_F$ and $c_\theta\geq \|\bar{A}\|_F + 2 \lambda_{\max}^{\bar{A}}$, one can conclude that inequality \eqref{eqn:dpsi}  is satisfied for all $(R_e,\theta)\in \mathcal{F}$.

	Finally, we are going to verify the conditions in Assumption \ref{assum:1}. From \eqref{eqn:crit_U_R} and $\Psi_V  = \{I_3\} \cup \mathcal{R}_a(\pi,\mathcal{E}(A))$, one obtains that $\Psi_U\setminus\{\mathcal{A}_o\} = \{(R,\theta)\in SO(3)\times \mathbb{R}: R = \mathcal{R}_a(\pi,v), v\in \mathcal{E}(A) , \theta = 0 \}$. Let $\lambda^{\bar{A}}_v$ be the eigenvalue of $\bar{A}$ associated to the eigenvector $v\in \mathcal{E}(\bar{A}) \equiv \mathcal{E}(A)$.
	For any   $v\in \mathcal{E}(A)$ and $\theta \in \mathbb{R}$, one can show that
	\begin{align}
		U(\mathcal{R}_a(\pi,v),0)  & = V(\mathcal{R}_a(\pi,v))  = 4 v\T \bar{A} v = 4\lambda_v^{\bar{A}}  \label{eqn:Upiv} \\
		U(\mathcal{R}_a(\pi,v),\theta) & = V(\mathcal{R}_a(\pi,v)\mathcal{R}_a(\theta,u)) + \frac{\gamma}{2} \theta^2 \nonumber \\
		&= V(\mathcal{R}_a(\pi,v))  + \frac{\gamma}{2} \theta^2  \nonumber \\
		&~~~ + \tr(A\mathcal{R}_a(\pi,v)(I-\mathcal{R}_a(\theta,u))) \nonumber \\
		&   = 4\lambda_v^{\bar{A}} + \frac{\gamma}{2} \theta^2 - 2\sin^2\left(\frac{\theta}{2}\right)  \Delta(v,u)  \label{eqn:Upiv2}
	\end{align}
	where $\Delta(u,v) = u\T \left(\tr(A)I - A - 2v\T A v(I_3 - vv\T) \right) u$, and we made use of the facts: $\mathcal{R}_a(\theta,u)  =  I_3 + \sin(\theta) u^\times + (1-\cos(\theta))(u^\times)^2$, $A\mathcal{R}_a(\pi,v) = A(I+2(v^\times)^2) = 2Avv\T -A$ and $ \tr(A\mathcal{R}_a(\pi,v)(I-\mathcal{R}_a(\theta,u))) =  -2\sin^2(\frac{\theta}{2})\Delta(u, v)$. Let $\Delta^*  = \min_{v\in \mathcal{E}(A)}\Delta(v,u) >0$, $\gamma<\frac{ 4\Delta^* }{\pi^2}$ and $\delta <  (  \frac{4 \Delta^* }{\pi^2} -\gamma )  \frac{\theta_M^2}{2}$. In view of \eqref{eqn:defu_U},  \eqref{eqn:Upiv} and \eqref{eqn:Upiv2}, for any $(R,\theta)\in \Psi_U\setminus\{\mathcal{A}_o\} $, one can show that
		\begin{align*}
			\mu_U(R,\theta) &= U(\mathcal{R}_a(\pi,v),0) - \min_{{\theta}'\in \Theta}U(\mathcal{R}_a(\pi,v),{\theta}') \\
			&  = \max_{{\theta'}\in \Theta}  \left(2\sin^2\left(\frac{{\theta}'}{2}\right)  \Delta(v,u) - \frac{\gamma}{2} {\theta'}^2 \right)  \\
			&\geq 2\sin^2\left(\frac{{\theta}_M}{2}\right)  \Delta(v,u) - \frac{\gamma}{2} \theta_M^2  \\
			&\geq  \left( \frac{ 4\Delta^* }{\pi^2}  -  \gamma  \right)  \frac{\theta_M^2}{2}   > \delta
		\end{align*} 
		where we made use of the facts $\theta_M  = \sup_{\theta'\in \Theta} |\theta'|$, $|\sin(\frac{\theta}{2})| \geq \frac{|\theta|}{\pi}$ and $2\sin^2\left(\frac{{\theta}}{2}\right)  \Delta(v,u) - \frac{\gamma}{2} {\theta}^2\geq 0$ for all $ |\theta| \in [0,\pi]$. 
		Given the set $\mathcal{P}_U$ in \eqref{eqn:PU}, it follows from \cite[Proposition 2]{berkane2017construction} that $\Delta^*>0$. This completes the proof.

		\subsection{Useful properties on $SE(3)$}\label{sec:property_SE(3)}
		In this subsection, we first introduce some definitions of the maps $(\cdot)^\wedge$, $\bar{\psi}$, adjoint action map $\Ad$ and adjoint operator  $\ad$. 
		For all $\xi = [\omega\T,v\T]\T $ with $\omega,v \in \mathbb{R}^3$,  we define the   map $(\cdot)^\wedge: \mathbb{R}^6 \to \mathfrak{se}(3)$ as
		\begin{equation}
			\xi^\wedge = \begin{bmatrix}
				\omega^\times & v\\
				0 & 0
			\end{bmatrix} \in \mathfrak{se}(3).
		\end{equation}
		Motivated by \cite{wang2019hybrid}, we introduce the following map $\bar{\psi}:\mathbb{R}^{4\times 4}\to \mathbb{R}^6$ given as:
		\begin{align}
			\bar{\psi}(\mathbb{A})
			= \begin{bmatrix}
				\psi(A)\\
				\frac{1}{2}b
			\end{bmatrix}, ~~ \forall \mathbb{A} = \begin{bmatrix}
				A & b\\
				c\T & d
			\end{bmatrix}\in \mathbb{R}^{4\times 4}
		\end{align}
		with $A \in \mathbb{R}^{3\times 3}, b,c\in \mathbb{R}^3, d\in \mathbb{R}$. Similar to the map $\psi$,  one has the following identities:
		\begin{align}
			\langle\langle \mathbb{A}, y^\wedge \rangle\rangle &= 2y\T \bar{\psi}(\mathbb{A}) \label{eqn:bar_psi1} \\
			\bar{\psi}(X\T (I_4-X) \mathbb{A}) &= -\bar{\psi} ((I_4 - X^{-1})\mathbb{A}) \label{eqn:bar_psi2}
		\end{align}
		for all $\mathbb{A}\in \mathbb{R}^{4\times 4},y\in \mathbb{R}^6$.	We define the adjoint operator  $\ad: \mathbb{R}^6 \to \mathbb{R}^{6\times 6}$ as
		\begin{equation}
			\ad_\xi = \begin{bmatrix}
				\omega^\times & 0\\
				v^\times & \omega^\times 
			\end{bmatrix} \in  \mathbb{R}^{6\times 6}, \quad\forall \xi = \begin{bmatrix}
				\omega\\
				v
			\end{bmatrix} 
		\end{equation}
		and the adjoint map $\Ad: SE(3) \to \mathbb{R}^{6\times 6}$   as
		\begin{equation}
			\Ad_X = \begin{bmatrix}
				R & 0\\
				p^\times R & R
			\end{bmatrix} \in  \mathbb{R}^{6\times 6}, \quad \forall  X = \begin{bmatrix}
				R & p\\
				0 & 1
			\end{bmatrix}.
		\end{equation}
		One can also verify the following identities:
		\begin{subequations}
			\begin{align}
				\Ad_{X^{-1}} &= \Ad_X^{-1}\\
				\Ad_X \Ad_Y &= \Ad_{XY}  \\
				X x^\wedge X^{-1} &= (\Ad_X x)^\wedge\\
				\det(\Ad_X) & = 1 \\
				\ad_x x &= 0\\
				\ad_x y &= -\ad_y x
			\end{align}
		\end{subequations}	
		for all $X,Y\in SE(3), x,y\in \mathbb{R}^6$.
		Moreover, along the trajectories $\dot{X} = X\xi^\wedge$ with $(X,\xi) \in SE(3)\times \mathbb{R}^6$, one has
			\begin{align}
				\frac{d}{dt} \Ad_X  = \Ad_X \ad_\xi, \quad
				\frac{d}{dt} \Ad_X^{-1}  = -\ad_\xi \Ad_X^{-1}.
			\end{align}

	\bibliographystyle{IEEEtran}
	
	\bibliography{mybib}
\end{document}